\numberwithin{equation}{section}
\newtheorem{theorem}[equation]{Theorem}
\newtheorem{lemma}[equation]{Lemma}
\newtheorem{cor}[equation]{Corollary}
\newtheorem{conj}[equation]{Conjecture}
\theoremstyle{definition}
\newtheorem{remark}[equation]{Remark}
\newcommand{\CC}{\mathbb{C}}
\newcommand{\FF}{\mathbb{F}}
\newcommand{\QQ}{\mathbb{Q}}
\newcommand{\RR}{\mathbb{R}}
\newcommand{\ZZ}{\mathbb{Z}}
\DeclareMathOperator{\coker}{coker}
\DeclareMathOperator{\res}{res}
\DeclareMathOperator{\Trace}{Trace}
\newcommand{\SageMath}{\textsc{SageMath}}
\newcommand{\Magma}{\textsc{Magma}}
\newcommand{\avlink}[1]{\href{http://www.lmfdb.org/Variety/Abelian/Fq/#1}{\textsf{#1}}}
\newcommand{\arXiv}[3]{\href{https://arxiv.org/abs/#1}{arXiv:#1v#2} (#3)}
\begin{document}

\title{The relative class number one problem for function fields, I}
\author{Kiran S. Kedlaya}
\date{August 23, 2022}

\thanks{Thanks to Xander Faber for providing an early draft of \cite{faber-grantham-howe},
Thomas Grubb for bringing the work of Dragutinovi\'c to our attention, and Drew Sutherland for help with computing abelian extensions of function fields in \Magma. The author was supported by NSF (grants DMS-1802161, DMS-2053473) and UC San Diego (Warschawski Professorship).}

\begin{abstract}
We reduce the classification of finite extensions of function fields (of curves over finite fields) with the same class number to a finite computation;
complete this computation in all cases except when both curves have base field $\FF_2$ and genus $>1$; and give a conjectural answer in the remaining cases. The conjecture will be resolved in subsequent papers.
\end{abstract}

\maketitle

\section{Introduction}

The \emph{relative class number one problem} for function fields (of curves over finite fields) is to classify finite extensions for which the relative class number equals 1, or equivalently the class numbers of the two function fields coincide.
In this paper, we solve this problem in all cases except where both function fields have base field $\FF_2$, and to reduce that case to a \emph{feasible} finite computation. This extends work of numerous authors \cite{bae-kang, jung-ahn, kida-murayabashi, leitzel-madan, macrae} but our arguments are independent of these.

For comparison, the relative class number one problem for number fields was formulated by Stark
\cite{stark} only for \emph{CM fields}, viewed as totally imaginary quadratic extensions of totally real fields. This restriction is quite natural: outside of this case, the relative unit rank is nonzero and the relative class number behaves erratically
(e.g., it is not generally integral). Odlyzko \cite{odlyzko} established conditionally on GRH that there are only finitely many CM fields with relative class number one. The complete set of  \emph{normal} CM fields with relative class number one has been determined recently by Hoffman--Sircana \cite{hofmann-sircana}.

Before continuing, we introduce some terminology and notation.
By a \emph{function field}, we mean the field of rational functions on a curve over some finite field.
Given a finite extension $F'/F$ of function fields, we write
$C, C'$ for the curves corresponding to $F,F'$;
$q_F, q_{F'}$ for the orders of the base fields of $C,C'$;
$g_F, g_{F'}$ for the genera of $C, C'$;
and $h_F, h_{F'}$ for the class numbers of $F, F'$.
We write $J(C), J(C')$ for the Jacobians of $C,C'$, so that $\#J(C)(\FF_{q_F}) = h_F$ and $\#J(C')(\FF_{q_{F'}}) = h_{F'}$.

The \emph{relative class number} $h_{F'/F}$ is the ratio $h_{F'}/h_F$; this can be interpreted as the order of a certain finite group (see below), and hence is an integer. This implies the following reduction: for $E = F \cdot \FF_{q_{F'}}$, $h_{F'/F} = 1$ if and only if $h_{E/F} = h_{F'/E} = 1$.
We may thus focus on the cases where $F' = E$, in which case we say the extension $F'/F$ is \emph{constant},
and where $E = F$, in which case we say $F'/F$ is \emph{purely geometric}.

In the case of a constant extension, the equality $h_{F'/F} = 1$ holds for trivial reasons when $F' = F$ and when $g_F = g_{F'} = 0$ (as in this case $h_F = h_{F'} = 1$).
Excluding these, we have the following result; see \S\ref{sec:constant extensions} for the proof.
\begin{theorem} \label{T:constant bounds}
Let $F'/F$ be a constant extension of degree $d>1$ of function fields with $g_{F} > 0$, $q_{F'} > q_F $, and $h_{F'/F} = 1$. Then $(q_F, d, g_F, J(C))$ is one of
\begin{gather*}
(2,2,1,\mbox{\avlink{1.2.c}}), (2,2,2,\mbox{\avlink{2.2.c\_c}}),
(2,2,2,\mbox{\avlink{2.2.d\_f}}), (2,2,3,\mbox{\avlink{3.2.e\_j\_p}}), \\
(2,3,1,\mbox{\avlink{1.2.b}}), (2,3,1,\mbox{\avlink{1.2.c}}),
(3,2,1,\mbox{\avlink{1.3.ad}}), (4,2,1,\mbox{\avlink{1.4.ae}}),
\end{gather*}
where $J(C)$ is specified up to isogeny by an LMFDB label.
\end{theorem}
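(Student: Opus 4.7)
The plan is to reinterpret $h_{F'/F}=1$ as an explicit polynomial condition on the characteristic polynomial $P_F$ of Frobenius on $J(C)$, use Weil's bounds to reduce to finitely many triples $(q_F, d, g_F)$, and then enumerate. Let $\alpha_1, \bar\alpha_1, \ldots, \alpha_{g_F}, \bar\alpha_{g_F}$ be the Frobenius eigenvalues of $J(C)$ over $\FF_{q_F}$, and write $P_F(T) = \prod_i (T - \alpha_i)(T - \bar\alpha_i)$. Since $C'$ is the base change of $C$ to $\FF_{q_F^d}$, the Frobenius on $J(C')$ has eigenvalues $\alpha_i^d, \bar\alpha_i^d$, so $h_F = P_F(1)$ and $h_{F'} = \prod_i (1-\alpha_i^d)(1-\bar\alpha_i^d)$; factoring $1 - X^d = \prod_{j=0}^{d-1}(1 - \zeta_d^j X)$ with $\zeta_d = e^{2\pi i/d}$ then gives
\[
h_{F'/F} \;=\; \prod_{i=1}^{g_F} \frac{(1 - \alpha_i^d)(1 - \bar\alpha_i^d)}{(1 - \alpha_i)(1 - \bar\alpha_i)} \;=\; \prod_{j=1}^{d-1} P_F(\zeta_d^j),
\]
which reduces to $h_{F'/F} = P_F(-1)$ when $d=2$. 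Combining Weil's bounds $h_F \le (1+\sqrt{q_F})^{2g_F}$ and $h_{F'} \ge (q_F^{d/2}-1)^{2g_F}$, the identity $h_F = h_{F'}$ forces $q_F^{d/2} \le 2 + \sqrt{q_F}$, leaving only $(q_F, d) \in \{(2,2),(3,2),(4,2),(2,3)\}$.

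For each of these four base cases I would enumerate the admissible $P_F$: its coefficients lie in the bounded integer box imposed by Weil and the functional equation, and the algebraic identity $\prod_{j=1}^{d-1} P_F(\zeta_d^j) = 1$ must hold. Cross-referencing with the LMFDB tables of isogeny classes of abelian varieties over $\FF_{q_F}$ gives, for each $g_F$, a finite list of candidates; for each candidate one verifies that the isogeny class actually contains a Jacobian of a genus-$g_F$ curve, either by exhibiting a curve or by appealing to existing classifications of low-genus curves over $\FF_2, \FF_3, \FF_4$.

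The main obstacle is that the Weil-bound reduction does not by itself bound $g_F$: whenever $E$ is an elliptic curve satisfying $\prod_j P_E(\zeta_d^j) = 1$---for instance the trace-$-2$ class $\avlink{1.2.c}$ over $\FF_2$, for which $P_E(-1) = 1$---the power $E^{g_F}$ tautologically satisfies $h_{F'/F}=1$ for every $g_F$. Bounding $g_F$ therefore reduces to showing that $E^{g_F}$ ceases to contain a Jacobian beyond a certain threshold, which calls for Ekedahl--Serre--Howe--style estimates on the maximal genus of a curve whose Jacobian is isogenous to a power of a fixed (typically supersingular) elliptic curve over a small field.
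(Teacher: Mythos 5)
Your identity $h_{F'/F}=\prod_{j=1}^{d-1}P_F(\zeta_d^j)$ is correct, and your Weil-bound deduction that $(q_F,d)\in\{(2,2),(3,2),(4,2),(2,3)\}$ is a clean, self-contained alternative to the paper's route for that step (the paper instead observes that $\prod_{j=1}^{d-1}P_F(\zeta_d^jT)$ is the Weil polynomial of an abelian variety of order $1$ and invokes the classification of such varieties, Lemma~\ref{lem:simple order 1}, plus a root-of-unity analysis for $d=3$ in Lemma~\ref{L:base change degree 3}). The final enumeration against LMFDB, including the check that each candidate isogeny class contains a Jacobian, is also what the paper does.

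The genuine gap is the one you flag yourself: nothing in your argument bounds $g_F$, and the remedy you propose is both misdirected and much harder than necessary. It is misdirected because the obstruction is larger than powers of a single elliptic curve: each simple isogeny factor $A_i$ of $J(C)$ contributes a positive integer $\prod_{j=1}^{d-1}P_i(\zeta_d^j)$ to the product, so each factor individually must be (for $d=2$) the quadratic twist of an abelian variety of order $1$ over $\FF_{q_F}$, and over $\FF_2$ such varieties exist simply in every dimension by Madan--Pal and Robinson; an Ekedahl--Serre--Howe bound for powers of \avlink{1.2.c} would not touch these. It is harder than necessary because a soft trace argument suffices (Lemma~\ref{L:constant classification}): letting $A$ be the twist of $J(C)$, so $T_{A,q^i}=(-1)^iT_{C,q^i}$, the trivial inclusion $C(\FF_{q_F})\subseteq C(\FF_{q_F^2})$ gives $0\le \#C(\FF_{q_F^2})-\#C(\FF_{q_F})=(q_F^2-q_F)-T_{A,q_F}-T_{A,q_F^2}$, while order-$1$ abelian varieties have traces growing linearly in the dimension (Lemma~\ref{L:t2 plus t4}: $T_{A,q}=4g$, $T_{A,q^2}=8g$ for $q=4$; both equal to $3g$ for $q=3$; and $T_{A,2}+T_{A,4}\ge 2$ per simple factor for $q=2$, with equality only in dimension $\le 3$). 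Comparing the two yields $g_F\le 1$ for $q_F=4$, $g_F\le 2$ for $q_F=3$ (with $g_F=2$ then excluded because the candidate isogeny class contains no Jacobian), and $g_F\le 3$ for $q_F=2$, $d=2$; a similar point count over $\FF_8$ disposes of $(q_F,d)=(2,3)$. You should replace the last paragraph of your plan with an argument of this kind; without it the enumeration you describe never terminates.
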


In the case of a purely geometric extension, the equality $h_{F'/F} = 1$ holds for trivial reasons when $F' = F$ and when $g_F = g_{F'}  \in \{0,1\}$.
Moreover, when $g_F \in \{0,1\}$, for any fixed pair of isomorphism classes of $F$ and $F'$, the existence of a single finite morphism $F \to F'$ implies the existence of infinitely many more. It is thus natural to separate the cases $g_F \leq 1$ and $g_F > 1$; see \S\ref{sec:exhaustion} and \S\ref{sec:geometric q34} for the proofs.
\begin{theorem} \label{T:purely geometric bounds1}
Let $F'/F$ be a purely geometric extension of degree $d$ of function fields with $g_F \leq 1$, $g_{F'} > g_F$, and $h_{F'/F} = 1$.
Then $(q_F,g_F, g_F',J(C),J(C'))$ appears in Table~\ref{table:geometric extensions big q1}. (Note that the tuple does not always uniquely determine $F'$.)
\end{theorem}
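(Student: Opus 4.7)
The plan hinges on the identification of $h_{F'/F}$ with $\#P(\FF_{q_F})$, where $P$ is the Prym variety of the cover $\pi\colon C'\to C$, an abelian variety of dimension $g_{F'}-g_F$ over $\FF_{q_F}$ fitting into an isogeny $J(C')\sim \pi^*J(C)\times P$. Under this identification, $h_{F'/F}=1$ is equivalent to $P$ having only the identity as an $\FF_{q_F}$-rational point, a very restrictive condition on the $L$-polynomial of $P$.

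The first step is to reduce to $q_F\in\{2,3,4\}$ via the Weil lower bound $\#A(\FF_q)\geq (\sqrt q-1)^{2\dim A}$, obtained by pairing Frobenius eigenvalues with their complex conjugates. For $q\geq 5$ and $\dim A\geq 1$ this already forces $\#A(\FF_q)\geq 2$, so no positive-dimensional Prym admits a single rational point. Within $q_F\in\{2,3,4\}$, I would further bound the Prym dimension $g_{F'}-g_F$ using sharper Serre/Oesterl\'e-type point-count estimates and the Honda--Tate description of Weil numbers, so that only finitely many isogeny classes of $P$ remain.

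The second step is an enumeration, carried out against the LMFDB database of abelian varieties over $\FF_q$ for $q\in\{2,3,4\}$. I would list all isogeny classes of Jacobians $J(C')$ of the allowed genera that admit an isogeny decomposition $\pi^*J(C)\times P$ with $J(C)$ the isogeny class of a Jacobian of genus $g_F\leq 1$ and $\#P(\FF_{q_F})=1$. For $g_F=0$ this amounts to isolating isogeny classes whose $L$-polynomial $L(T)$ satisfies $L(1)=1$; for $g_F=1$ one must additionally split off a compatible elliptic factor realized by a known elliptic curve over $\FF_{q_F}$, with the pullback accounting for the degree-$d$ map.

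The main obstacle is upgrading each surviving numerical candidate to an actual finite extension of function fields, or else excluding it. In the low degrees that survive, I would parametrize tame cyclic covers of the base curve via ray class field theory of $C$ (which \Magma\ can compute), and noncyclic abelian covers as subgroups of the appropriate ray class groups; wild covers over $\FF_2$ and $\FF_3$ enter via Artin--Schreier(--Witt) theory and must be enumerated with care. Each produced cover is then confirmed by computing its zeta function and matching it against the target pair. The entries of Table~\ref{table:geometric extensions big q1} are exactly the candidates for which such a realization succeeds; the remaining candidates must be proven unrealizable, and this elimination, rather than the enumeration itself, is the most delicate bookkeeping step.
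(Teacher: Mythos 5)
Your overall strategy---identify $h_{F'/F}$ with $\#A(\FF_{q_F})$ for the Prym variety $A$, force $q_F \leq 4$, bound $g_{F'}-g_F$ by Oesterl\'e-type point-count inequalities, and enumerate isogeny classes against LMFDB---is the same as the paper's, and your elementary reduction to $q_F \leq 4$ via $(1-\alpha)(1-\overline{\alpha}) \geq (\sqrt{q}-1)^2$ is a clean substitute for the paper's citation of Madan--Pal at that step.

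The gap is in your final ``realization/elimination'' step. First, the covers in question need not be Galois, let alone abelian: for $g_F=0$ a degree-$d$ map $C' \to \PP^1$ is generically an $S_d$-cover, and similarly for maps to an elliptic curve, so parametrizing candidates by ray class fields and Artin--Schreier--Witt theory of $C$ systematically misses most covers and in particular cannot be used to \emph{exclude} a candidate. Second, for $g_F \leq 1$ the existence of the map is essentially automatic once the isogeny decomposition $J(C') \sim J(C) \times A$ holds (every curve maps to $\PP^1$; for $g_F=1$ compose Abel--Jacobi with the projection onto the elliptic factor), so the genuine elimination problem is to decide which candidate Weil polynomials for $C'$ are realized by \emph{any} curve at all. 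The paper settles this by table lookups in complete curve censuses (LMFDB for $g' \leq 3$, Xarles for $g'=4$, Dragutinovi\'c for $g'=5$), and for the one remaining case $(q_F,g_F,g_{F'})=(2,1,6)$ it invokes the Howe--Lauter resultant-$2$ criterion to kill the candidate \avlink{6.2.ad\_c\_a\_a\_m\_abg} and exhibits an explicit curve realizing \avlink{6.2.ad\_c\_a\_f\_am\_q}. Your toolkit as described contains neither the censuses nor the resultant criteria, and without something of that kind the genus-$6$ candidates over $\FF_2$ (and the genus-$4$ and $5$ ones, which lie outside LMFDB's curve tables) cannot be closed.
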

When $g_F = 0$, Theorem~\ref{T:purely geometric bounds1} recovers the solution of the \emph{absolute class number one problem} for function fields
\cite{leitzel-madan-queen, stirpe, mercuri-stirpe, shen-shi}.

\begin{theorem} \label{T:purely geometric bounds2}
Let $F'/F$ be a purely geometric extension of degree $d$ of function fields with $g_{F'} > g_F > 1$ and $h_{F'/F} = 1$.
\begin{enumerate}
\item[(a)]
If $q_F > 2$, then $q_F \in \{3,4\}$, $(g_F, g_F') \in \{(2,3), (2,4), (3,5)\}$, $F'/F$ is (Galois) cyclic, and $(q_F, g_F, g_{F'}, F)$ appears in Table~\ref{table:geometric extensions big q2}. In each listed case, the tuple uniquely determines $F'$.
\item[(b)]
If $q_F = 2$, then $g_F \leq 7$ and $g_{F'} \leq 13$. The isogeny classes of $J(C)$ and the Prym variety $A$ (see below) form one of $208$ pairs listed 
in Table~\ref{table:geometric bounds}.
\item[(c)]
If $q_F = 2$, then assuming that $F'/F$ is cyclic, there are exactly $61$ tuples $(d,g_F, g_{F'}, F)$ with $g_F \notin \{6,7\}$, and at least $3$ with $g_F \in \{6,7\}$; see Tables~\ref{table:geometric extensions big q2b1} and~\ref{table:geometric extensions big q2b}. In each listed case, the tuple uniquely determines $F'$.
\end{enumerate}
\end{theorem}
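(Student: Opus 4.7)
My approach would center on the \emph{Prym variety} $A$ of the cover $C' \to C$, defined as the identity component of the kernel of the norm map $J(C') \to J(C)$. For a purely geometric extension, $A$ is an abelian variety over $\FF_q := \FF_{q_F}$ of dimension $r := g_{F'} - g_F$, there is an isogeny $J(C') \sim J(C) \times A$, and
\begin{equation*}
h_{F'/F} = \#A(\FF_q) = P_A(1),
\end{equation*}
where $P_A$ is the Weil polynomial of $A$. Writing $P_A(T) = \prod_{i=1}^{2r}(T-\alpha_i)$ with $|\alpha_i| = \sqrt{q}$ and pairing conjugates, the hypothesis $h_{F'/F} = 1$ becomes
\begin{equation*}
\prod_{i=1}^{r}\bigl(q + 1 - 2\,\mathrm{Re}(\alpha_i)\bigr) = 1,
\end{equation*}
with each factor lying in the interval $[(\sqrt{q}-1)^2, (\sqrt{q}+1)^2]$. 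The plan is to enumerate admissible Weil polynomials $P_A$, pair them with admissible Jacobian polynomials $P_{J(C)}$, and test which pairs come from an actual cover.

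For part (a), each factor above exceeds $1$ when $q \geq 5$, forcing $r = 0$; when $q = 4$, equality requires $\alpha_i = 2$ for all $i$, so $A$ is isogenous to a power of the supersingular elliptic curve with Weil polynomial $(T-2)^2$. To bound $r$ in the remaining cases $q \in \{3,4\}$, and analogously to bound $g_F$ and $g_{F'}$ when $q = 2$ for part (b), I would set up a linear program combining the positivity $\#C'(\FF_{q^n}) \geq 0$ with the trivial fiber inequality $\#C'(\FF_{q^n}) \leq d \cdot \#C(\FF_{q^n})$ for all $n$, read off the zeta formula $\#C(\FF_{q^n}) = q^n + 1 - \sum_{j} \beta_j^n$. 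For $q \in \{3,4\}$ this should collapse quickly to $(g_F, g_{F'}) \in \{(2,3), (2,4), (3,5)\}$; for $q = 2$, Serre's explicit methods should yield $g_F \leq 7$ and $g_{F'} \leq 13$. Within these bounds I would exhaust Weil-polynomial pairs $(P_{J(C)}, P_A)$ with $P_A(1) = 1$ for which $P_{J(C)} \cdot P_A$ arises from a genus-$g_{F'}$ curve; since the isomorphism classes of curves over $\FF_2$ of genus up to $13$ are catalogued (e.g.\ in the LMFDB), this enumeration is feasible and should yield the 208 pairs of Table~\ref{table:geometric bounds}.

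For part (c), I would enumerate cyclic extensions by class field theory. Fixing a base curve $C$ realizing each Jacobian isogeny class from part (b), I would compute ray class groups of $F$ of each admissible conductor in \Magma, extract cyclic quotients of each admissible degree, read off the Weil polynomial of the resulting cover from the associated Hecke characters, and keep those whose Prym lies in the target isogeny class. For $g_F \leq 5$ this should be computationally manageable and should yield the 61 tuples in Tables~\ref{table:geometric extensions big q2b1} and~\ref{table:geometric extensions big q2b}. The main obstacle is that for $g_F \in \{6,7\}$ the numbers of base curves and of admissible conductors grow sharply, so only three sporadic cyclic examples (found by targeted rather than exhaustive search) can be verified within the scope of the present paper; the exhaustive cyclic enumeration in those two genera, and the treatment of noncyclic extensions in part (b), are deferred to subsequent papers.
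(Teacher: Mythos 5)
Your overall architecture (Prym variety of order 1, linear-programming point bounds, exhaustion over Weil polynomial pairs, class field theory for the cyclic covers) matches the paper's, and your observation that each factor $q+1-2\,\mathrm{Re}(\alpha_i)$ exceeds $1$ for $q\geq 5$ and equals $1$ only for $\alpha_i=2$ when $q=4$ is a clean way to start part (a). But there is a genuine gap at the heart of part (b): for $q=2$ the constraints you propose --- positivity $\#C'(\FF_{2^n})\geq 0$, i.e.\ $T_{A,2^n}\leq \#C(\FF_{2^n})$, plus the fiber inequality --- do \emph{not} bound $g_{F'}-g_F$, because simple abelian varieties of order $1$ over $\FF_2$ can have very small Frobenius traces (e.g.\ \avlink{2.2.a\_ae} has $T_{A,2}=0$), so a large-dimensional $A$ need not force $C$ to have many points over any fixed field. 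The paper closes this by invoking the Madan--Pal/Robinson classification of simple order-$1$ varieties over $\FF_2$ (roots parametrized by $\alpha^2+(\eta-1)\alpha-2\eta=0$ for $\eta$ a root of unity) and proving the ``excess'' inequality (Lemma~\ref{lem:excess}): a specific positive combination of $T_{A,2},T_{A,4},T_{A,8},T_{A,16}$ is at least $1.5612\dim A$, verified by LMFDB for $\dim\leq 6$ and by explicit trace computations in $\QQ(\eta)$ for $\dim\geq 8$. Without this (or some substitute lower bound on traces growing linearly in $\dim A$), your linear program has no reason to terminate, and ``Serre's explicit methods should yield $g_F\leq 7$'' is not justified; even with the excess lemma the LP only gives $g_F\leq 9$ for $d=2$, and the reduction to $g_F\leq 7$ requires the full Weil-polynomial exhaustion with the refined resultant criterion of Corollary~\ref{cor:Delta sequence}. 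Relatedly, your $q=3$ case is incomplete: $(\sqrt3-1)^2<1$, so your per-root factors need not each equal $1$, and you still need the classification of simple order-$1$ varieties over $\FF_3$ (argue instead with $\#A(\FF_3)=\prod_j \#B_j(\FF_3)$ over simple factors, each a positive integer).

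Two further points. First, your feasibility claim is wrong as stated: isomorphism classes of curves over $\FF_2$ are \emph{not} catalogued up to genus $13$; complete tables exist only through genus $5$ (LMFDB, Xarles, Dragutinovi\'c). The paper never enumerates the covers $C'$ directly --- it enumerates base curves $C$ of genus $\leq 5$ (deferring $6$ and $7$ to the sequels) and constructs candidate covers by class field theory, testing candidate Weil polynomials for $C'$ only through necessary conditions (positivity of place counts, Howe--Lauter resultant criteria, the relative quadratic twist). Second, part (a) asserts that $F'/F$ \emph{is} cyclic, which for $d=3$ requires proof: one must rule out non-Galois cubic covers by analyzing the Galois closure (as in Lemma~\ref{L:degree 3 cyclic}, where the point counts forced on the Galois closure or on the twisted cover contradict the Weil bounds). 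Your proposal assumes cyclicity rather than establishing it.
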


In Theorem~\ref{T:purely geometric bounds2}(c), there are only two cases (\avlink{3.2.ab\_a\_c} and \avlink{5.2.b\_c\_e\_i\_i}) where $F$ is not uniquely specified by $d, g_F, g_{F'}, J(C)$. The scarcity of such examples reflects that curves with isogenous Jacobians can typically be distinguished by the $L$-functions of their abelian covers \cite{booher-voloch}.

By our earlier reduction, we recover the following corollary.
\begin{cor}
Let $F'/F$ be an extension of degree $d$ of function fields with $g_{F'} > g_F$ and $h_{F'/F} = 1$ which is neither constant nor purely geometric. Then $q_F = 2$, $q_{F'} = 4$, and
$(g_F, g_{F'}, J(C), J(C')) \in \{(0,1,0, \avlink{1.4.ae}), (1,2,\avlink{1.2.c}, \avlink{2.4.ae\_i})\}$.
\end{cor}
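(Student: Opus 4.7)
The plan is to apply the reduction from the introduction and then cross-reference the enumerations given by Theorems~\ref{T:constant bounds}, \ref{T:purely geometric bounds1}, and~\ref{T:purely geometric bounds2}. Set $E = F \cdot \FF_{q_{F'}}$. Since $F'/F$ is not purely geometric we have $q_{F'} > q_F$, so $F \subsetneq E$; since $F'/F$ is not constant we have $E \subsetneq F'$. Thus $F \subsetneq E \subsetneq F'$ is a proper tower, with $E/F$ a nontrivial constant extension and $F'/E$ a nontrivial purely geometric extension, and the reduction stated in the introduction forces $h_{E/F} = h_{F'/E} = 1$.

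Next I would enumerate the two pieces separately. For $E/F$, Theorem~\ref{T:constant bounds} yields either $g_F = 0$ (in which case $h_{E/F} = 1$ is automatic and $E/F$ is only constrained by $q_{F'} > q_F$), or one of the eight explicit tuples $(q_F, [E:F], g_F, J(C))$. For $F'/E$, Theorem~\ref{T:purely geometric bounds1} (when $g_F \leq 1$) or Theorem~\ref{T:purely geometric bounds2} (when $g_F \geq 2$), applied with base data $q_E = q_{F'}$ and $g_E = g_F$, confines the extension to an entry of Table~\ref{table:geometric extensions big q1}, Table~\ref{table:geometric extensions big q2}, or Table~\ref{table:geometric bounds}. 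The cross-reference is to compute, for each constant-extension candidate, the isogeny class $J(C_E)$ from $J(C)$ by raising Frobenius eigenvalues to the $[E:F]$-th power, and then check whether the resulting pair $(J(C_E), J(C'))$ actually appears in the relevant purely-geometric table.

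When $g_F = 0$, the constraint $q_E > q_F \geq 2$ together with the Weil bounds leaves only one viable entry from the $g_E = 0$ part of Table~\ref{table:geometric extensions big q1}, namely the one-point elliptic curve $\avlink{1.4.ae}$ over $\FF_4$, which forces $q_F = 2$. When $g_F \geq 1$, each of the eight tuples from Theorem~\ref{T:constant bounds} is tried: the four with $q_E \in \{8, 9, 16\}$ are eliminated because the corresponding purely-geometric tables contain no compatible entry in those base fields, and the three with $(g_F, J(C)) \in \{(2, \avlink{2.2.c\_c}), (2, \avlink{2.2.d\_f}), (3, \avlink{3.2.e\_j\_p})\}$ are eliminated by checking that the computed $J(C_E)$ over $\FF_4$ does not match the base Jacobian of any entry in Table~\ref{table:geometric extensions big q2}. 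The unique survivor is $(q_F, [E:F], g_F, J(C)) = (2, 2, 1, \avlink{1.2.c})$, paired with the purely geometric extension giving $J(C') = \avlink{2.4.ae\_i}$. The main obstacle is this cross-referencing step: it is conceptually trivial but depends on having the full tables from the earlier theorems in hand, and requires the careful arithmetic of transporting Jacobians under the constant base change for every candidate.
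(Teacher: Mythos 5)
Your proposal is correct and follows exactly the route the paper intends: the paper's entire proof of this corollary is the phrase ``by our earlier reduction,'' i.e.\ factor the extension through $E = F\cdot\FF_{q_{F'}}$, apply Theorem~\ref{T:constant bounds} to $E/F$ and Theorems~\ref{T:purely geometric bounds1}--\ref{T:purely geometric bounds2} to $F'/E$, and cross-reference (noting that $q_E$ must be a proper power of $q_F$, which kills every table entry except those over $\FF_4$). Your elimination of the remaining $q_E=4$ candidates by base-changing the listed Jacobians and comparing against Tables~\ref{table:geometric extensions big q1} and~\ref{table:geometric extensions big q2} is the right computation and yields exactly the two stated tuples.
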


We now summarize the techniques used to prove Theorem~\ref{T:constant bounds}, Theorem~\ref{T:purely geometric bounds1}, and Theorem~\ref{T:purely geometric bounds2}. 
The extension $F'/F$ induces an injective morphism $f$ from $J(C)$ to the Weil restriction of $J(C')$ from $\FF_{q_{F'}}$ to $\FF_{q_F}$,
and $h_{F'/F}$ can be interpreted as the order of the group $A(\FF_{q_F})$ where $A$ is the cokernel of $f$;
we call $A$ the \emph{Prym variety} of the covering $C' \to C$.
We restrict options for $C$ and $C'$ using the structure of simple abelian varieties of order $1$ over $\FF_q$: for $q \geq 5$ there are none;
for $q = 3,4$ there are only elliptic curves;
for $q=2$ there is an infinite series described in work of Madan--Pal \cite{madan-pal}
and Robinson \cite{robinson}.

The severe restrictions on $A$ impose constraints in turn on the number of rational points on $C$ and $C'$ over various finite extensions of their base fields. In the constant case, the restrictions lead quickly to Theorem~\ref{T:constant bounds} because the zeta function of $C'$ is uniquely determined by the zeta function of $C$ and the degree of the extension. By contrast, in the purely geometric case there is no obvious way to predict the zeta function of $C'$ from that of $C$; we instead argue that $C$ is forced to have many rational points, which for $g_F \gg 0$ will violate a ``linear programming'' bound \cite[Part II]{serre-rational}. 
This yields effective upper bounds on $g_F$ and $g_{F'}$;
we then obtain a list of candidates for the Weil polynomials of $F$ and $F'$
by an exhaustion in \SageMath{} (as described in \cite{kedlaya-root}, and later used in LMFDB as per \cite{lmfdb-av}). There is a loose parallel here with the Serre--Lauter method for refining upper bounds on rational points on curves over finite fields \cite{lauter}.

To complete the proofs, we identify candidates for $C$ with a given zeta function
using data from LMFDB \cite{lmfdb}, which includes a table of genus-4 curves by Xarles \cite{xarles}, plus a similar table of genus-5 curves computed by Dragutinovi\'c \cite{dragutinovic}.
We then make a computation of abelian extensions of function fields in \Magma{}.

The relative class number one problem is now reduced to the following.
\begin{conj} \label{conj:remaining problem}
Let $F'/F$ be a purely geometric extension of degree $d>1$ with $q_F = 2$, $g_F > 1$, and $h_{F'/F} = 1$.
Then $F$ appears in one of Tables~\ref{table:geometric extensions big q2b1} or~\ref{table:geometric extensions big q2b}
\end{conj}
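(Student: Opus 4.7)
The plan is to promote the isogeny-class restrictions of Theorem~\ref{T:purely geometric bounds2}(b) into an explicit enumeration of covers, settling the two gaps left after part (c): the $g_F \in \{6,7\}$ cyclic case, and the noncyclic case for every admissible $g_F$. In both cases the strategy is the same. For each of the $208$ candidate pairs $(J(C), A)$ in Table~\ref{table:geometric bounds}, first build a finite list of base curves $C/\FF_2$ whose Jacobian lies in the prescribed isogeny class; then, for each such $C$, enumerate purely geometric extensions $F'/F$ of the required degree $d$ and check whether the resulting Prym variety lies in the specified isogeny class and has trivial $\FF_2$-rational point group. Because $h_{F'/F}=1$ forces $h_{F''/F}=1$ for every intermediate $F''$, the search is inherently tower-by-tower, and at every intermediate step one stays inside the already-classified list of base curves.

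For the noncyclic problem with $g_F \in \{2,3,4,5\}$, the base curves are already tabulated in LMFDB (via Xarles and Dragutinovi\'c, as used above), so only the enumeration of covers is new. Non-cyclic abelian covers still lie in the scope of class field theory and can be listed in \Magma{} by iterating the ray-class-group computation used for part (c) over the lattice of index-$d$ subgroups. Non-abelian Galois covers reduce to abelian ones by factoring through the derived series of the Galois group; the admissible solvable groups $G$ are heavily restricted, since $G$ must have a faithful transitive degree-$d$ action and each of its intermediate quotients must produce an intermediate $F''$ already on our list. Genuinely non-Galois covers are handled by passing to the Galois closure $\tilde F/F$, whose Galois group has a core-free subgroup of index $d$, and again each intermediate field must be one of the already-enumerated candidates.

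For $g_F \in \{6,7\}$, the preliminary task is to build, for each relevant entry of Table~\ref{table:geometric bounds}, the complete list of smooth curves $C/\FF_2$ of that genus with the prescribed Jacobian isogeny class; no suitable table currently exists in LMFDB. The $208$ candidate isogeny classes determine the sequence $\#C(\FF_{2^n})$ exactly, and in the regime $g_F \in \{6,7\}$ the point counts are large enough to force $C$ to be non-hyperelliptic and typically trigonal or tetragonal; one can then parametrize the search by a canonical or linear-system model and filter by zeta function. Once the base curves are in hand, the cover enumeration proceeds exactly as in the lower-genus case.

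The main obstacle is unquestionably the curve enumeration in genus $6$ and $7$ over $\FF_2$: the raw number of isomorphism classes is large, and constraining to a fixed Jacobian isogeny class is not enough by itself to make the search routine. Exploiting the specific geometry of each candidate isogeny class --- the gonality, the structure of special pencils, and the decomposition of $J(C)$ up to isogeny --- will be essential, and proving \emph{completeness} of the resulting enumeration, rather than just producing examples, is the delicate point. A secondary difficulty is bookkeeping in the non-Galois case, where the Prym variety $A$ need not be a Jacobian isotypic factor and must be extracted from the representation of $\Gal(\tilde F/F)$ on $J(\tilde C)$; verifying the isogeny class of $A$ then requires a careful Weil-polynomial computation rather than a direct zeta-function comparison.
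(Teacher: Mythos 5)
You have set out to prove a statement that the paper explicitly labels a \emph{conjecture} and does not prove: the paper only \emph{reduces} Conjecture~\ref{conj:remaining problem}, via Theorem~\ref{T:purely geometric bounds2}, to two statements deferred to the sequels \cite{part2, part3} --- namely that every such extension is cyclic, and that Table~\ref{table:geometric extensions big q2b} is complete in genera $6$ and $7$ (to be settled by a census of canonical curves following Mukai \cite{mukai-cg, mukai}). So there is no proof in this paper to compare against, only an announced strategy. Your plan overlaps with that strategy on the genus $6$--$7$ side (enumerating curves with prescribed zeta function via canonical or linear-system models is essentially the Mukai-census approach, and you correctly identify completeness of the enumeration as the hard point), but diverges on the noncyclic side: the paper intends to \emph{prove} that the extension must be cyclic by extending the Galois-closure/isogeny-decomposition argument of Lemma~\ref{L:degree 3 cyclic}, whereas you propose to \emph{enumerate} noncyclic covers directly.

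That enumeration plan has a concrete gap. For a genuinely non-Galois cover you pass to the Galois closure $\tilde F/F$ and assert that ``each intermediate field must be one of the already-enumerated candidates.'' But the relative class number one hypothesis only propagates to intermediate fields $F''$ with $F \subseteq F'' \subseteq F'$ (since $h_{F'/F} = h_{F'/F''}\, h_{F''/F}$ with both factors integral); subfields of $\tilde F$ not contained in $F'$ inherit nothing, and $\tilde F$ itself need not satisfy $h_{\tilde F/F}=1$. Moreover $\Gal(\tilde F/F)$ embeds in $S_d$ with $d$ up to $7$, so it can be non-solvable and $g_{\tilde F}$ can far exceed $13$, which defeats both the derived-series reduction and any appeal to the existing tables. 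The workable route here --- the one the paper signals --- is to use the isogeny decomposition of $J(\tilde C)$ into copies of $J(C)$, $A$, and other factors dictated by the permutation representation, and then derive a contradiction from point-count bounds (as in Lemma~\ref{L:degree 3 cyclic}), rather than to place the intermediate fields of $\tilde F$ on the candidate list. Your tower-by-tower observation for fields between $F$ and $F'$ is correct and useful, but it does not reach the Galois closure.
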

By Theorem~\ref{T:purely geometric bounds2}, this further reduces to the following two logically independent statements,
which will be addressed in subsequent work \cite{part2, part3}.
\begin{itemize}
\item
Any extension as in Conjecture~\ref{conj:remaining problem} is cyclic. This will follow from Theorem~\ref{T:purely geometric bounds2}(b) by extending the argument for $q_F > 2$ (see Lemma~\ref{L:degree 3 cyclic}).
\item
Table~\ref{table:geometric extensions big q2b} is complete in genera 6 and 7.
This will follow from a limited census based on Mukai's descriptions of canonical curves of these genera \cite{mukai-cg, mukai}; the entries in Table~\ref{table:geometric extensions big q2b} come from a preliminary version of this census.
\end{itemize}

We have not considered the relative class number $m$ problem for $m > 1$, as in \cite{leitzel-madan-queen}. This would require adapting Lemma~\ref{lem:excess} to abelian varieties over $\FF_2$ of order $m$.
For each $m$ it is known that there are infinitely many simple abelian varieties of order $m$ over $\FF_2$ \cite{kedlaya-construction}, but it seems hopeless to give a complete classification;
a better approach might be modeled on the use of resultants to prove statements
about small algebraic integers (see \cite{smith} for recent progress in this direction).

All computations in \SageMath{} \cite{sage} and \Magma{} \cite{magma}
are documented in Jupyter notebooks available from a GitHub repository \cite{repo};
the computations take under 2 hours on a single CPU (Intel i5-1135G7@2.40GHz)
and generate an Excel spreadsheet of the 208 pairs of Weil polynomials in Theorem~\ref{T:purely geometric bounds2}(b).
We use LMFDB labels for isogeny classes of abelian varieties over finite fields,
formatted as links into the site.

\section{Abelian varieties of order 1}

We say that an abelian variety $A$ over a finite field $\FF_q$ has \emph{order $1$} if we have $\#A(\FF_q) = 1$; that is, the group of $\FF_q$-rational points of $A$ is trivial. Recall that $\#A(\FF_q) = P(1)$ where $P(T) \in \ZZ[T]$ is the Weil polynomial associated to $A$.

\begin{lemma} \label{lem:simple order 1}
Let $A$ be a simple abelian variety of order $1$ over some finite field $\FF_q$.
\begin{enumerate}
    \item[(a)] We must have $q \leq 4$.
    \item[(b)] If $q \in \{3,4\}$, then $A$ is an elliptic curve with Weil polynomial $T^2 - qT + q$.
\item[(c)] If $q=2$, then each root $\alpha$ of the Weil polynomial of $A$ satisfies
    \begin{equation} \label{eq:alpha minpoly}
\alpha^2 + (\eta - 1)\alpha - 2 \eta = 0
\end{equation}
for some root of unity $\eta$. The roots of unity $\eta$ of order $n$  give rise to two irreducible Weil polynomials if $n = 7, 30$
and one otherwise. The resulting $A$ is ordinary unless $n$ is a power of $2$, in which case it has $p$-rank $0$.
\end{enumerate}
\end{lemma}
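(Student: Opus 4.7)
My plan is to base parts (a) and (b) on the Weil bound applied to $P(1)$. Factoring the Weil polynomial as $P(T) = \prod_{i=1}^g (T^2 - s_i T + q)$ with $s_i = \alpha_i + \bar\alpha_i \in [-2\sqrt{q}, 2\sqrt{q}]$, we obtain
\begin{equation*}
\#A(\FF_q) = P(1) = \prod_{i=1}^g (q+1-s_i) \geq (q+1-2\sqrt{q})^g = (\sqrt{q}-1)^{2g},
\end{equation*}
which exceeds $1$ whenever $q \geq 5$; this settles (a). For (b) with $q = 4$, equality in the bound forces every $s_i = 4$, hence every conjugate pair to degenerate to $\alpha_i = 2$. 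Since $A$ is simple, the Weil polynomial is a power of $T - 2$, and by Honda--Tate the unique such simple isogeny class is the supersingular elliptic curve with Weil polynomial $T^2 - 4T + 4$. For $q = 3$, writing $P = m^e$ with $m$ irreducible reduces to $m(1) = 1$ (since each factor $q+1-s_i > 0$), equivalently $f(4) = 1$ for the ``real'' polynomial $f(t) = \prod(t - s_i)$, whose roots form a Galois-conjugate set of totally real algebraic integers in $[-2\sqrt{3}, 2\sqrt{3}]$. A finite exhaustion over such $f$, exploiting the lower bound $4 - s_i > 4 - 2\sqrt{3} > 1/2$, rules out all degrees $>1$ and leaves only $f(t) = t - 3$, so $m(T) = T^2 - 3T + 3$.

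For (c), I would leverage the Madan--Pal/Robinson parametrization. Given $\alpha \in \CC$ with $|\alpha|^2 = 2$, set $\eta := -\alpha(\alpha-1)/(\alpha-2)$, which rearranges to \eqref{eq:alpha minpoly}. Using $\alpha \bar\alpha = 2$, one computes $(2-\alpha)(2-\bar\alpha) = 6 - 2t$ and $\alpha(1-\alpha) \cdot \bar\alpha(1-\bar\alpha) = 2(3-t)$, where $t = \alpha + \bar\alpha$, giving $|\eta|^2 = 1$ identically and likewise for every Galois conjugate. Rationalizing yields the identity $\eta = (\alpha - 1)^2/(3 - t)$, so $\eta$ is an algebraic integer iff $3 - t$ is a unit in the maximal totally real subfield of $\QQ(\alpha)$. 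The ``order $1$'' condition $\prod_i (3 - t_i) = 1$ says exactly that this norm equals $1$, forcing $3 - t$ to be a totally positive unit. Hence $\eta$ is an algebraic integer whose conjugates all lie on the unit circle, and Kronecker's theorem makes $\eta$ a root of unity. Conversely, any root of unity $\eta$ defines via \eqref{eq:alpha minpoly} a Weil $\FF_2$-number $\alpha$, which by Honda--Tate yields a simple abelian variety of order $1$.

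To finish (c), I would enumerate irreducible Weil polynomials and compute $p$-ranks directly. Equation~\eqref{eq:alpha minpoly} has two roots $\alpha_1, \alpha_2$ satisfying $\alpha_1 + \alpha_2 = 1 - \eta$ and $\alpha_1 \alpha_2 = -2\eta$; both satisfy $|\alpha_i|^2 = 2$ since $\alpha_2 = -\eta \bar\alpha_1$. Varying $\eta$ over the $\varphi(n)$ primitive $n$-th roots produces a Galois-stable set of $2\varphi(n)$ values of $\alpha$. This set forms a single Galois orbit over $\QQ$ (one irreducible Weil polynomial of degree $2\varphi(n)$) when \eqref{eq:alpha minpoly} is irreducible over $\QQ(\eta)$, and splits into two orbits of size $\varphi(n)$ (two distinct Weil polynomials) when it is reducible. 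Reducibility amounts to the discriminant $(\eta+3)^2 - 8$ being a square in $\QQ(\zeta_n)$; I would verify by direct computation in small cyclotomic fields, combined with ramification arguments at $2$, that this happens exactly for $n \in \{7, 30\}$. For the $p$-rank, I examine the $2$-adic Newton polygon of \eqref{eq:alpha minpoly}: since $\eta$ is always a unit, $v_2(-2\eta) = 1$; and $\eta \equiv \zeta_m \pmod{\mathfrak{p}}$ for $\mathfrak{p} \mid 2$ and $m$ the odd part of $n$, so $v_2(\eta - 1) = 0$ iff $m > 1$, i.e., iff $n$ is not a power of $2$. In that case the Newton slopes are $0$ and $1$ and $A$ is ordinary; otherwise both slopes equal $1/2$ and $A$ has $p$-rank $0$. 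The main obstacle is the delicate verification that $n \in \{7, 30\}$ are precisely the orders for which the discriminant of \eqref{eq:alpha minpoly} is a square in $\QQ(\zeta_n)$.
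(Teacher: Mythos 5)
Your overall strategy is sound and, where you carry it out, largely matches the sources the paper actually cites for this lemma (the paper gives no self-contained proof: it simply quotes Madan--Pal \cite{madan-pal}, Robinson \cite{robinson}, and \cite{DWK}). Parts (a) and the $q=4$ case of (b) are correct, and your derivation that $\eta$ is a root of unity --- via the identity $\eta = (\alpha-1)^2/(3-t)$, the observation that $3-t$ is a totally positive unit, and Kronecker's theorem --- is exactly the right mechanism. The Newton-polygon computation of the $p$-rank is also essentially correct, though for $n = 2^a$ with $a \geq 3$ the slopes are $2^{1-a}$ and $1 - 2^{1-a}$, not both $1/2$; the conclusion of $p$-rank $0$ survives because neither slope vanishes.

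There are, however, two genuine gaps. First, for $q=3$ your ``finite exhaustion'' is not finite as described: totally real algebraic integers with all conjugates in $[-2\sqrt{3}, 2\sqrt{3}]$ exist in every degree (the interval has length greater than $4$), and the constraint $\prod_i(4-s_i)=1$ with each factor in $(4-2\sqrt{3},\,4+2\sqrt{3})$ does not bound the degree --- a product of numbers each lying in $(0.53,\,7.47)$ can equal $1$ with arbitrarily many factors, so the lower bound $4-s_i > 1/2$ buys you nothing. One needs either a Robinson-style interval analysis (e.g.\ applying Kronecker to $u + 4/u - 6$ where $u = 4-s$, whose conjugates land in $[-2,2)$) or a Schur--Siegel--Smyth trace bound to eliminate large degrees; as written this step is missing. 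Second, identifying $n \in \{7,30\}$ as exactly the orders for which \eqref{eq:alpha minpoly} becomes reducible over $\QQ(\zeta_n)$ is the heart of the count of Weil polynomials, and you defer it to an unexecuted ``direct computation combined with ramification arguments''; since the statement must be proved for all $n$ at once, not just small ones, this is a real gap rather than a routine verification. A smaller point: your check that both roots of \eqref{eq:alpha minpoly} have absolute value $\sqrt{2}$ is circular as stated, since you invoke $\bar\alpha_1 = 2/\alpha_1$ to conclude $|\alpha_1|^2 = 2$; the honest argument notes that $\alpha \mapsto 2/\bar\alpha$ permutes the two roots, and that the swapping case forces $|\alpha|^2 + 2 = 2\,\mathrm{Re}(\alpha)$, which is impossible.
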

\begin{proof}
This follows from \cite[Theorem~4]{madan-pal},
\cite{robinson} (for the second assertion of (c)),
\cite[Lemma~5.1]{DWK} (for the description in \eqref{eq:alpha minpoly}), and \cite[Lemma~4.3]{DWK} (for the $p$-rank).
\end{proof}

We deduce some consequences for the Frobenius traces of abelian varieties of order 1; for $q=2$ we establish a stronger result later (Lemma~\ref{lem:excess}).
For $A$ an abelian variety over a finite field $\FF_q$ and $n$ a positive integer, let $T_{A,q^n}$ be the trace of the $q^n$-power Frobenius on $A$; we also write $T_{C,q^n}$ in case $A = J(C)$.

\begin{lemma} \label{lem:simple order 1 traces}
Let $A$ be a simple abelian variety of order $1$ over $\FF_2$. Choose $\alpha, \eta$ as in \eqref{eq:alpha minpoly} and assume that the order of $\eta$ is not in $\{1,2,7,30\}$. 
For
\begin{equation} \label{eq:bound on trace}
t_i = \Trace_{\QQ(\eta)/\QQ}(\eta^i) = \frac{\phi(n)}{\phi(n/\gcd(n,i))} \mu \left( \frac{n}{\gcd(n,i)} \right)
\end{equation}
(where $\mu$ is the M\"obius function), we have
\begin{align*}
T_{A,2} &= \Trace_{\QQ(\eta)/\QQ} (1 - \eta) = \phi(n) - t_1 \\
T_{A,4} &= \Trace_{\QQ(\eta)/\QQ} (1 + 2\eta + \eta^2) = \phi(n) + 2t_1 + t_2 \\
T_{A,8} &= \Trace_{\QQ(\eta)/\QQ} (1 + 3\eta - 3\eta^2 - \eta^3) = \phi(n) + 3t_1 - 3t_2 - t_3 \\
T_{A,16} &= \Trace_{\QQ(\eta)/\QQ} (1 + 4\eta - 2\eta^2 + 4\eta^3 + \eta^4) = \phi(n) + 4t_1 - 2t_2 + 4t_3 + t_4.
\end{align*}
\end{lemma}
\begin{proof}
Our assumption on $n$ ensures that $\QQ(\alpha)$ is a quadratic extension of $\QQ(\eta)$. From \eqref{eq:alpha minpoly}, we see that 
\[
T_{A,2} = \Trace_{\QQ(\alpha)/\QQ}(\alpha) = \Trace_{\QQ(\eta)/\QQ} (1-\eta) = \phi(n) - t_1.
\]
Similarly, from \eqref{eq:alpha minpoly} we deduce that
\begin{align*}
0 &= \alpha^4 + (-1-2\eta - \eta^2)\alpha^2 + 4\eta^2 \\
&= \alpha^6 + (-1 - 3\eta + 3\eta^2 + \eta^3) \alpha^3 - 8\eta^3 \\
&= \alpha^8 + (-1 - 4\eta + 2\eta^2 - 4\eta^3 - \eta^4)\alpha^4 + 16\eta^4,
\end{align*}
from which we read off the expressions for $T_{A,4}, T_{A,8}, T_{A,16}$.
\end{proof}

\begin{lemma} \label{L:t2 plus t4}
Let $A$ be an abelian variety of order $1$ and dimension $g$ over $\FF_q$.
\begin{enumerate}
    \item[(a)] If $q=4$, then $T_{A,q}  = 4g$, $T_{A,q^2} =8g$.
    \item[(b)] If $q=3$, then $T_{A,q} = 3g$, $T_{A,q^2} = 3g$.
    \item[(c)] If $q=2$ and $A$ is simple, then $T_{A,2} + T_{A,4} \geq 2$. This is strict if $g \geq 4$.
\end{enumerate}
\end{lemma}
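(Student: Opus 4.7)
For parts (a) and (b), the plan is to reduce to simple factors. Any isogeny decomposition $A \sim \prod B_i^{n_i}$ into simples gives $\#A(\FF_q) = \prod \#B_i(\FF_q)^{n_i}$, so the order-1 hypothesis forces each $B_i$ to have order $1$. By Lemma~\ref{lem:simple order 1}(b), for $q \in \{3,4\}$ the only such $B_i$ is the elliptic curve $E$ with Weil polynomial $T^2 - qT + q$, so $A \sim E^g$ and $T_{A, q^k} = g \cdot T_{E, q^k}$. From $\alpha + \bar\alpha = q$ and $\alpha \bar\alpha = q$, one reads off $T_{E, q} = q$ and $T_{E, q^2} = q^2 - 2q$; substitution yields (a) and (b).

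For (c), apply Lemma~\ref{lem:simple order 1}(c) to select a root $\alpha$ of the Weil polynomial of $A$ satisfying \eqref{eq:alpha minpoly} for a root of unity $\eta$ of order $n$, and split into cases. The small cases are handled by direct calculation: $n = 1$ gives the supersingular abelian surface with Weil polynomial $(T^2 - 2)^2$ and $T_{A,2} + T_{A,4} = 0 + 8$; $n = 2$ gives the elliptic curve with Weil polynomial $T^2 - 2T + 2$ and $T_{A,2} + T_{A,4} = 2 + 0$; the bound holds in both cases (the latter with equality, which is allowed since $g = 1 < 4$).

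For generic $n \notin \{1, 2, 7, 30\}$, one has $[\QQ(\alpha):\QQ(\eta)] = 2$, so $g = \phi(n)$, and Lemma~\ref{lem:simple order 1 traces} gives
\[
T_{A,2} + T_{A,4} = 2\phi(n) + t_1 + t_2,
\]
where $t_i = \Trace_{\QQ(\eta)/\QQ}(\eta^i)$ equals the Ramanujan sum $c_n(i) = \mu(n/d)\phi(n)/\phi(n/d)$ with $d = \gcd(n, i)$. A short case analysis on $v_2(n)$ yields $t_1 + t_2 \geq -2$: for $v_2(n) \leq 1$ both $|t_1|, |t_2| \leq 1$, while for $v_2(n) \geq 2$ we have $\mu(n) = 0$ and $|t_2| \leq 2$. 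Hence $T_{A,2} + T_{A,4} \geq 2g - 2$, which is $\geq 2$ for $g \geq 2$ and strictly exceeds $2$ for $g \geq 4$.

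Finally, for $n \in \{7, 30\}$ the quadratic $T^2 + (\eta - 1)T - 2\eta$ splits over $\QQ(\eta)$, producing two explicit Weil polynomials of degree $\phi(n)$ with $g = \phi(n)/2 \in \{3, 4\}$; computing $T_{A,2}$ and $T_{A,4}$ from the two roots in $\QQ(\eta)$ verifies the bound ($\geq 2$ for $n = 7$ and $> 2$ for $n = 30$). The main obstacle is the uniform lower bound $t_1 + t_2 \geq -2$ from the Ramanujan-sum analysis; the small and exceptional cases are a finite and routine check.
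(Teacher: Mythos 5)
Your proof is correct, and for part (c) it takes a genuinely (if modestly) different route from the paper. Parts (a) and (b) match the paper's intent exactly: the paper simply says these are ``apparent from Lemma~\ref{lem:simple order 1},'' and your reduction to simple factors via multiplicativity of point counts, followed by $T_{E,q}=q$ and $T_{E,q^2}=q^2-2q$, is precisely the omitted computation. For (c), the paper verifies $g\leq 6$ by consulting the LMFDB table of simple order-1 abelian varieties (Table~\ref{table:simple AVs of order 1}) and then, for $g>6$, uses the crude bounds $|t_1|\leq 1$, $|t_2|\leq 2$ to get $T_{A,2}+T_{A,4}\geq 2g-3$. You instead sharpen the trace estimate to $t_1+t_2\geq -2$ via the H\"older/Ramanujan-sum formula and a case split on $v_2(n)$ (noting that $t_1=\mu(n)$ and $t_2=-t_1$ cancel when $n\equiv 2\pmod 4$, while $t_1=0$ when $4\mid n$), which gives $T_{A,2}+T_{A,4}\geq 2g-2$ uniformly in the generic case $g=\phi(n)\geq 2$; this leaves only the four exceptional orders $n\in\{1,2,7,30\}$ to check by hand, correctly identifying that equality occurs at $n\in\{2,3,4,7\}$ (all with $g\leq 3$) so the strict inequality for $g\geq 4$ survives. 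What your approach buys is independence from the database lookup for this lemma, and a bound that is actually tight; what the paper's approach buys is that the same table is needed anyway for Lemma~\ref{lem:excess}, so the lookup costs nothing extra there. Both arguments share the same implicit reliance on the Weil polynomial of $A$ being the (irreducible) minimal polynomial of Frobenius, so that $g=\phi(n)$ generically, which is supplied by the cited results behind Lemmas~\ref{lem:simple order 1} and~\ref{lem:simple order 1 traces}.
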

\begin{proof}
Parts (a) and (b) are apparent from Lemma~\ref{lem:simple order 1}. To check (c), we check for $g \leq 6$ using LMFDB;\footnote{On an LMDFB page, the entry ``Point counts of the curve'' lists $q^i+1-T_{A,q^i}$ for $i=1,\dots,10$.} see Table~\ref{table:simple AVs of order 1} for the detailed results.
For $g > 6$, Lemma~\ref{lem:simple order 1 traces} and \eqref{eq:bound on trace}
yield $T_{A,2} + T_{A,4} = 2g + t_1 + t_2 \geq 2g - 1-2 \geq 2$, as desired.
\end{proof}

\section{Constant extensions}
\label{sec:constant extensions}

In this section, we prove Theorem~\ref{T:constant bounds}. We recall a point from the introduction: for any abelian variety $A$ over $\FF_q$ and any positive integer $d$,
the Weil restriction of $A$ from $\FF_{q^d}$ to $\FF_q$ is isogenous to the product of $A$ with the ``Prym variety'' $A'$.

\begin{lemma} \label{L:base change degree 3}
Let $A$ be an abelian variety over $\FF_q$ such that $\#A(\FF_q) = \#A(\FF_{q^d})$ for some prime $d>2$. Then $q=2$, $d=3$, and
the Weil polynomial of every simple isogeny factor of $A$ belongs to $\{T^2 + T + 2, T^2 + 2T + 2\}$.
\end{lemma}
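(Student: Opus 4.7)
The plan is to convert the equality of point counts into an identity on Frobenius eigenvalues, apply a Weil-type lower bound to reduce to $(q,d) = (2,3)$, and then classify that residual case explicitly.

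Let $\alpha_1, \ldots, \alpha_{2g}$ be the roots of the Weil polynomial of $A$ (with multiplicity) and set $h(T) = (T^d - 1)/(T - 1) \in \ZZ[T]$. The identity $\#A(\FF_{q^n}) = \prod_i (1 - \alpha_i^n)$ transforms the hypothesis into $\prod_i h(\alpha_i) = 1$. Since $|\alpha_i| = \sqrt q$, the triangle inequality gives
\[
|h(\alpha_i)| \;=\; \frac{|\alpha_i^d - 1|}{|\alpha_i - 1|} \;\geq\; \frac{q^{d/2} - 1}{q^{1/2} + 1},
\]
and a short numerical check confirms that this lower bound strictly exceeds $1$ for every prime $d \geq 3$ and every $q \geq 2$ \emph{except} $(q,d) = (2,3)$. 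In any excluded case $\prod_i |h(\alpha_i)| > 1$, contradicting the identity; hence $q = 2$ and $d = 3$.

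With $(q,d) = (2,3)$, one has $h(T) = T^2 + T + 1 = (T - \omega)(T - \bar\omega)$ for $\omega$ a primitive cube root of unity, so the identity reads $|P_A(\omega)|^2 = 1$, forcing $P_A(\omega) \in \ZZ[\omega]$ to be a sixth root of unity. Since $A$ is simple and no Weil $2$-number is real, the Weil polynomial factors as $P_A = m^e$ for an irreducible $m(T) = \prod_j(T^2 - a_j T + 2)$, where $a_j$ runs over the Galois conjugates of the totally real algebraic integer $a = \pi + \bar\pi$ and $|a_j| < 2\sqrt 2$ for every $j$. Using $\omega^2 = -1 - \omega$ and $|\omega| = 1$, a direct expansion of $m(\omega) = \prod_j(1 - (1+a_j)\omega)$ yields
\[
|m(\omega)|^2 \;=\; \prod_j \bigl((1+a_j)^2 + (1+a_j) + 1\bigr) \;=\; \prod_j (a_j^2 + 3a_j + 3),
\]
so the condition becomes $N_{\QQ(a)/\QQ}(a^2 + 3a + 3) = 1$. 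For $a \in \ZZ$, the Weil bound restricts $a \in \{-2, -1, 0, 1, 2\}$, and $a^2 + 3a + 2 = 0$ then forces $a \in \{-1, -2\}$, yielding the two claimed Weil polynomials $T^2 + T + 2$ and $T^2 + 2T + 2$; both are readily verified to satisfy the original hypothesis.

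The main obstacle is ruling out $n := [\QQ(a):\QQ] \geq 2$. I would attack this by noting that each factor $a_j^2 + 3a_j + 3 = |a_j + 1 - \omega|^2$ lies in $[3/4,\ 11 + 6\sqrt 2]$ and equals exactly $1$ iff $a_j \in \{-1, -2\}$, so the product equaling $1$ (with the $a_j$ not all in that set) forces a precise cancellation between factors above and below $1$. Concretely, letting $f \in \ZZ[x]$ be the minimal polynomial of $a$, the unit condition translates to $f(\omega - 1) \in \{\pm 1, \pm \omega, \pm \omega^2\}$, a pair of linear constraints modulo $\omega^2 + \omega + 1 = 0$, and the Vieta bounds $|f_{n-k}| \leq \binom{n}{k}(2\sqrt 2)^k$ make the enumeration finite for each $n$. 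For $n = 2$ one obtains six pairs $(p,q)$ for $f = x^2 + px + q$: three of the resulting polynomials are reducible over $\QQ$ (two as squares of linear factors, one as a product of distinct linear factors) and three have negative discriminant, so none is both irreducible and totally real. I expect higher $n$ to succumb to the same enumeration, supplemented if needed by the observation that $a^2 + 3a + 3$ must be a totally positive unit in $\mathcal O_{\QQ(a)}$, a scarce commodity in the narrow range permitted by the Weil bound.
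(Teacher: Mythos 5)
Your reduction to $(q,d)=(2,3)$ is correct and genuinely different from the paper's: writing the hypothesis as $\prod_i h(\alpha_i)=1$ with $h(T)=(T^d-1)/(T-1)$ and bounding each factor below by $(q^{d/2}-1)/(q^{1/2}+1)$ kills every case except $(2,3)$ by a triangle-inequality estimate, whereas the paper instead applies the classification of simple abelian varieties of order $1$ (Lemma~\ref{lem:simple order 1}) to the auxiliary variety with Weil polynomial $\prod_{i=1}^{d-1}P(\zeta_d^iT)$ and then does a case analysis via \cite[Lemma~7.1]{DWK}. Your translation of the residual case into $N_{\QQ(a)/\QQ}(a^2+3a+3)=1$ for the totally real integer $a=\pi+\bar\pi$ with conjugates in $(-2\sqrt{2},2\sqrt{2})$ is also correct, and your $n=1$ and $n=2$ computations check out.

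The gap is at the final step, and it is exactly the hard content of the lemma. The dimension of $A$ is not bounded in the hypothesis, so $n=[\QQ(a):\QQ]$ is unbounded, and a degree-by-degree finite enumeration can never complete the argument; you need a single statement valid for all $n$, and neither the Vieta bounds nor the remark that $a^2+3a+3$ is a totally positive unit provides one. The interval $(-2\sqrt{2},2\sqrt{2})$ has length greater than $4$, so it contains totally real algebraic integers of arbitrarily large degree, and the factors $a_j^2+3a_j+3$ range over $[3/4,\,11+6\sqrt{2})$, so a priori many conjugates near $-3/2$ (factors below $1$) could balance a few large factors. Ruling this out is a theorem of the same nature as the Madan--Pal/Robinson classification: note that the order-one condition $\#A(\FF_2)=1$ is the entirely analogous equation $N_{\QQ(a)/\QQ}(3-a)=1$, whose solution set is an \emph{infinite} family, so ``scarcity of units in a short range'' cannot be invoked casually -- whether the solution set is finite depends delicately on the polynomial, and this is precisely why the paper routes the argument through Lemma~\ref{lem:simple order 1}(c) and \cite[Lemma~7.1]{DWK} rather than a direct unit computation. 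Separately, your assertion that no Weil $2$-number is real is false: $\pm\sqrt{2}$ are Weil $2$-numbers, corresponding to the simple abelian surface with Weil polynomial $(T^2-2)^2$; the case is harmless (one gets $m(\omega)=-3-\omega$, of norm $7\neq 1$), but it must be excluded by this computation rather than by the blanket claim.
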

\begin{proof}
Since $[A(\FF_{q^d}):A(\FF_q)] = \#A'(\FF_q)$ is an integer, the hypothesis that $\#A(\FF_q) = \#A(\FF_{q^d})$ implies the same for the isogeny factors of $A$;
we may thus assume that $A$ is simple.
Let $P(T)$ be the Weil polynomial of $A$. Then 
the Weil polynomial of $A'$ is $\prod_{i=1}^{d-1} P(\zeta_d^i T)$, and hence has roots $\alpha_1,\dots,\alpha_{d-1}$ such that
\[
\alpha_1 \zeta_d = \cdots = \alpha_{d-1} \zeta_d^{d-1};
\]
by Lemma~\ref{lem:simple order 1}, this is impossible if $q > 2$.
If $q=2$, then by \eqref{eq:alpha minpoly} there must exist roots of unity $\eta_1,\dots,\eta_{d-1}$ with
\[
\alpha_i^2 + (\eta_i-1)\alpha_i - 2\eta_i = 0 \qquad (i=1,\dots,d-1).
\]
For $1\leq i < j \leq d-1$, applying \cite[Lemma~5.2, Lemma 7.2]{DWK} to the equation $\alpha_i = \alpha_j \zeta_d^{j-i}$
shows that $(\eta_i, \eta_j, \zeta_d^{j-i})$ either appears in one of the parametric solutions in \cite[(7.2.1)]{DWK}
or is a sporadic solution fitting a pattern listed in \cite[Table~2]{DWK}.

If only parametric solutions occur, then from \cite[(7.2.1)]{DWK} we have 
$\eta_1 = \cdots = \eta_{d-1}$, leaving only two distinct values for $\alpha_1,\dots,\alpha_{d-1}$.
Hence $d= 3$; from \cite[(7.2.1)]{DWK} again, $\eta_1 = \eta_2 = -\zeta_3$ has order 6. This yields the Weil polynomial
$T^2+T+2$.

If we get a sporadic solution for some $i,j$, then \cite[Table~2]{DWK} indicates that $\zeta_d^{j-i}$ has order dividing 21, 24, or 30; this forces $d \leq 7$.
For $d \in \{5,7\}$, the $\eta_i$ must all have order 30 or 7, respectively; however, if $\alpha$ satisfies \eqref{eq:alpha minpoly} for some root of unity $\eta$ of this order, then at most two of the quantities $\{\alpha \zeta_d^i: i=1,\dots,d-1\}$ do likewise, and this leaves no options for $A'$.
Hence $d=3$; from \cite[Table~2]{DWK} (taking $\eta_3 = \zeta_3$), $\eta_1=\eta_2$ has order 4.
This yields the Weil polynomial $T^2+2T+2$.
\end{proof}

\begin{lemma} \label{L:constant classification}
Let $C$ be an algebraic curve of genus $g>0$ over $\FF_q$ such that $\#J(C)(\FF_q) = \#J(C)(\FF_{q^d})$ for some integer $d>1$.
Then
\[
(q,d,g) \in \{(2,2,1), (2,2,2), (2,2,3), (2,3,1), (3,2,1), (4,2,1)\}.
\]
\end{lemma}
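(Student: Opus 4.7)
The plan is to reduce to the case where $d$ is prime and $J(C)$ is simple, invoke the classifications of the resulting simple factors (from Lemmas~\ref{lem:simple order 1} and~\ref{L:base change degree 3}), and bound $g$ via the monotonicity $\#C(\FF_{q^i}) \le \#C(\FF_{q^j})$ for $i \mid j$. First, for any prime $\ell$ dividing $d$ the chain $J(C)(\FF_q) \subseteq J(C)(\FF_{q^\ell}) \subseteq J(C)(\FF_{q^d})$ forces $\#J(C)(\FF_q) = \#J(C)(\FF_{q^\ell})$, so we may assume $d$ is prime (implicitly $d > 1$, since $d = 1$ is vacuous). Decomposing $J(C) \sim \prod_k A_k^{m_k}$ up to isogeny with the $A_k$ simple and pairwise non-isogenous, the ratios $\#A_k(\FF_{q^d})/\#A_k(\FF_q)$ are positive integers whose $m_k$-weighted product equals $1$, so each individually equals $1$.

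If $d > 2$, Lemma~\ref{L:base change degree 3} forces $(q,d) = (2,3)$ and each $A_k$ to be an elliptic curve with Weil polynomial $T^2 + T + 2$ or $T^2 + 2T + 2$; writing $J(C) \sim E_1^a \times E_2^b$ with $a+b = g$, direct Frobenius trace calculations give $\#C(\FF_2) = 3 + a + 2b$ and $\#C(\FF_8) = 9 - 5a - 4b$, and the inequality $\#C(\FF_2) \le \#C(\FF_8)$ forces $g \le 1$. If $d = 2$, the condition $\#A_k(\FF_{q^2}) = \#A_k(\FF_q)$ rewrites as $P_{A_k}(-1) = 1$, i.e., the quadratic twist $A_k^\sigma$ has order $1$. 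Lemma~\ref{lem:simple order 1}(a,b) then forces $q \le 4$ and, for $q \in \{3,4\}$, pins down $A_k$ as the unique elliptic curve with Weil polynomial $T^2 + qT + q$; the analogous comparison of $\#C(\FF_q)$ with $\#C(\FF_{q^2})$ again gives $g \le 1$.

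The remaining case $(q,d) = (2,2)$ is the delicate one: Lemma~\ref{lem:simple order 1}(c) offers a large family of candidates for $A_k^\sigma$ (parametrized by a root of unity of order $n$, with $\dim A_k^\sigma = \phi(n)$), so dimension arguments alone are inconclusive. To proceed, I would apply Lemma~\ref{L:t2 plus t4}(c) to each $A_k^\sigma$ to obtain $T_{A_k^\sigma,2} + T_{A_k^\sigma,4} \ge 2$, and observe that quadratic twisting negates $T_{-,2}$ but preserves $T_{-,4}$, so $T_{A_k,4} - T_{A_k,2} \ge 2$ for every $k$. Summing with multiplicities,
\[
\#C(\FF_4) - \#C(\FF_2) = 2 - \bigl(T_{J(C),4} - T_{J(C),2}\bigr) \le 2 - 2\sum_k m_k;
\]
non-negativity of the left side forces $\sum_k m_k = 1$, so $J(C)$ is simple. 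The strict version of Lemma~\ref{L:t2 plus t4}(c) for dimension $\ge 4$ then rules out $g \ge 4$, leaving $g \in \{1,2,3\}$. The main obstacle is precisely this case, where Serre-type point count bounds do not suffice: the argument depends crucially on both the inequality $T_{A,2} + T_{A,4} \ge 2$ (to force $J(C)$ simple) and its strict form for $g \ge 4$ (to cap the dimension).
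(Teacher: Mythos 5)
Your proof is correct and follows essentially the same route as the paper: reduce to prime $d$ and to simple isogeny factors, invoke the order-one classification via the quadratic twist (resp.\ Lemma~\ref{L:base change degree 3} for $d>2$), and bound $g$ by comparing $\#C(\FF_q)$ with $\#C(\FF_{q^2})$ (resp.\ $\#C(\FF_{q^3})$) using Lemma~\ref{L:t2 plus t4}. Your explicit point count for $(q,d)=(2,3)$ and your multiplicity-counting argument forcing $J(C)$ to be simple when $(q,d)=(2,2)$ are precisely the details the paper's terser proof leaves implicit.
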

\begin{proof}
It suffices to prove the claim when $d$ is prime, as the result will then rule out composite values of $d$.
By Lemma~\ref{lem:simple order 1}, $q \leq 4$. 
By Lemma~\ref{L:base change degree 3} applied with $A = J(C)$,
if $d > 2$ then $(q,d,g) = (2,3,1)$.

Assume now that $d=2$.
Then the Prym variety $A'$ is the quadratic twist of $J(C)$, so $T_{A',q^i} = (-1)^i T_{C,q^i}$.
If $q \in \{3,4\}$, then by Lemma~\ref{L:t2 plus t4},
\begin{align*}
0 &\leq\#C(\FF_{q^2}) - \#C(\FF_q) = (q^2 + 1 - T_{A',q}) - (q + 1 + T_{A',q}) \\
& = q^2-q - T_{A',3} - T_{A',9} = q^2-q - q(q-1)g
\end{align*}
and so $g \leq 1$.
If $q=2$, then
\[
0 \leq \#C(\FF_4) - \#C(\FF_2) = (2^2 + 1 - T_4) - (2 + 1 + T_2) = 2 - T_2 - T_4 \leq 0
\]
with the last inequality strict unless $A'$ is simple of dimension at most 3.
\end{proof}

\begin{lemma} \label{L:constant classification full}
Let $C$ be a curve over $\FF_q$ such that $\#J(C)(\FF_q) = \#J(C)(\FF_{q^d})$ for some $d>1$.
Then $C$ appears in Theorem~\ref{T:constant bounds}.
\end{lemma}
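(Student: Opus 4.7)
The plan is to pick up from Lemma~\ref{L:constant classification}, which restricts $(q,d,g)$ to the six tuples listed there; for each tuple I would identify the isogeny classes of $J(C)$ compatible with $\#J(C)(\FF_q)=\#J(C)(\FF_{q^d})$ and then confirm that each surviving class is realized by an actual curve. The case $d=3$ forces $(q,g)=(2,1)$, and here Lemma~\ref{L:base change degree 3} already pins down the Weil polynomial of $J(C)$ to one of $T^2+T+2$ or $T^2+2T+2$, corresponding to \avlink{1.2.b} and \avlink{1.2.c}; since $g=1$, both classes are automatically realized by elliptic curves, producing two entries of Theorem~\ref{T:constant bounds}.

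For the remaining cases, where $d=2$, let $P(T)$ denote the Weil polynomial of $J(C)$. The factorization $\#J(C)(\FF_{q^2})=P(1)P(-1)$ together with $P(1)=\#J(C)(\FF_q)\geq 1$ reduces the point-count equality to the single linear condition $P(-1)=1$ on the coefficients of $P$. For $g=1$ one has $P(T)=T^2+aT+q$, which combined with $P(-1)=1$ forces $a=q$; the Hasse bound $|a|\leq 2\sqrt{q}$ then singles out $q\in\{2,3,4\}$, giving exactly the three classes \avlink{1.2.c}, \avlink{1.3.ad}, and \avlink{1.4.ae}. For the cases $(q,g)=(2,2)$ and $(2,3)$ I would carry out a short finite enumeration over the remaining coefficients of $P$, imposing that $P$ is a bona fide Weil polynomial (roots on the circle of radius $\sqrt{q}$), that the implied point counts $\#C(\FF_{q^n})=q^n+1-\sum_i \alpha_i^n$ are nonnegative for small $n$, and that $\#C(\FF_q)\leq \#C(\FF_{q^2})$. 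Together with $P(-1)=1$ these constraints cut the list of possibilities down to the three Weil polynomials displayed in Theorem~\ref{T:constant bounds}.

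The most delicate step, and the one I expect to be the main obstacle, is the last: verifying that each surviving isogeny class genuinely contains a Jacobian, rather than merely a principally polarized abelian variety. In genus one this is automatic. For $g=2$ and $g=3$ over $\FF_2$ I would appeal to LMFDB, whose isogeny-class pages record the Jacobian/non-Jacobian status of each class, supplemented where necessary by an explicit model — for instance a hyperelliptic curve of genus $2$ realizing each of \avlink{2.2.c\_c} and \avlink{2.2.d\_f}, and a genus-$3$ model (hyperelliptic or plane quartic) realizing $(\#C(\FF_2),\#C(\FF_4))=(7,7)$ to account for \avlink{3.2.e\_j\_p}. Once realization is confirmed in all three cases, the classification of Theorem~\ref{T:constant bounds} is complete.
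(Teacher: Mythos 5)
Your proposal is correct, and at the top level it follows the paper: both arguments lean on Lemma~\ref{L:constant classification} to reduce to the six tuples $(q,d,g)$ and then verify finitely many cases. The difference is in how that verification is done. The paper's proof is a one-line appeal to a search over the LMFDB isogeny classes permitted by Lemma~\ref{L:constant classification} (LMFDB also records which classes contain Jacobians, so the search settles everything at once). You instead make the search explicit by hand: the reformulation of the $d=2$ condition as $P(-1)=1$ is exactly the ``quadratic twist has order~$1$'' device already used in the proof of Lemma~\ref{L:constant classification}, and from there the Hasse bound disposes of $g=1$ while a short coefficient enumeration handles $(q,g)=(2,2),(2,3)$. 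I spot-checked that your filters (root location, nonnegativity of place counts, $\#C(\FF_q)\le\#C(\FF_{q^2})$, and $P(-1)=1$) do cut the genus-$2$ and genus-$3$ lists down to exactly the classes in Theorem~\ref{T:constant bounds}; the cleanest way to see this without enumerating coefficients is to note that $P(-1)=1$ makes the twist $A$ an order-$1$ abelian $g$-fold, and Lemma~\ref{L:t2 plus t4}(c) applied to each simple factor forces $A$ to be simple with $T_{A,2}+T_{A,4}=2$, leaving only \avlink{2.2.ad\_f}, \avlink{2.2.ac\_c}, \avlink{3.2.ae\_j\_ap} and hence their twists. Two small caveats: your claim that the stated constraints suffice is asserted rather than proved, so the enumeration does have to be carried out (or replaced by the twist argument just sketched); and the Jacobian-realization step you single out as the delicate point is actually needed only for the sharpness of the list in Theorem~\ref{T:constant bounds}, not for the completeness assertion of this lemma --- for the lemma itself, realization would matter only if your enumeration left extra isogeny classes that had to be discarded for failing to contain a Jacobian, which does not happen here.
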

\begin{proof}
As this property only depends on the isogeny class of $J(C)$, it suffices to search over the isogeny classes in LMFDB permitted by Lemma~\ref{L:constant classification}.
\end{proof}

\section{Bounds on rational points on curves}

We next compile some explicit upper bounds for the number of rational points on a curve over $\FF_q$. 
For $g \leq 10$, we reproduce in Table~\ref{table:manypoints bounds} some data from \cite{manypoints}
(see therein for underlying references).
For larger $g$, we use the ``linear programming'' method of Oesterl\'e.
(All decimal expansions herein refer to \emph{exact} rational numbers.)

\begin{table}[ht]
\tiny
\begin{tabular}{c||c|c|c|c|c|c|c|c|c|c}
$g$ & 1 & 2 & 3 & 4 & 5 & 6 & 7 & 8 & 9 & 10 \\
\hline
$q=2$ & 5 & 6 & 7 & 8 & 9 & 10 & 10 & 11 & 12 & 13 \\
$q=2^2$  & 9 & 10 & 14 & 15 & 17 & 20 & 21 & 23 & 26 & 27 \\
$q=2^3$ & 25 & 33 & 38 & 45 & 53 & 65 & 69 & 75 & 81 & 86 \\
\hline
$q=3$ & 7 & 8 & 10 & 12 & 13 & 14 & 16 & 18 & 19 & 21 \\
$q=3^2$ & 16 & 20 & 28 & 30 & 35 & 38 & 43 & 46 & 50 & 54
\end{tabular}
\medskip
\caption{Upper bounds on $\#C(\FF_q)$ for a genus-$g$ curve $C$ from \cite{manypoints}.}
\label{table:manypoints bounds}
\end{table}

\begin{lemma} \label{lem:oesterle}
Let $C$ be a curve of genus $g$ over $\FF_q$ with $q \in \{2,3,4\}$. Then
\[
\#C(\FF_q) \leq \begin{cases} 0.6272g + 9.562 & (q=2) \\
1.153g + 11.67 & (q=3) \\
1.435g + 21.75 & (q=4).
\end{cases}
\]
\end{lemma}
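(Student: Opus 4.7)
The plan is to apply Oesterl\'e's linear programming method and simply redo Serre's ``first choice'' computation with $q = 3, 4$ in place of $q = 2$. Write the Frobenius eigenvalues of $J(C)$ as $\sqrt{q} e^{\pm i \theta_j}$ for $j = 1, \dots, g$, so that $N_r := \#C(\FF_{q^r}) = q^r + 1 - 2 q^{r/2} \sum_{j} \cos(r \theta_j)$. Given any non-negative trigonometric polynomial
\[
f(\theta) = c_0 + 2 \sum_{r=1}^{d} c_r \cos(r \theta) \geq 0
\]
with $c_r \geq 0$ for $r = 1, \dots, d$, summing $f(\theta_j) \geq 0$ over $j$ and substituting yields
\[
\sum_{r=1}^{d} c_r q^{-r/2} N_r \;\leq\; c_0 g + \sum_{r=1}^{d} c_r q^{-r/2}(q^r + 1).
\]

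Next I would use the trivial inequality $N_r \geq N_1$ (every $\FF_q$-rational point of $C$ is also a point over $\FF_{q^r}$), which is legitimate precisely because each $c_r q^{-r/2} \geq 0$. Together with the display above this gives the clean linear estimate
\[
N_1 \;\leq\; \frac{c_0}{\sum_{r=1}^{d} c_r q^{-r/2}} \cdot g \;+\; \frac{\sum_{r=1}^{d} c_r (q^{r/2} + q^{-r/2})}{\sum_{r=1}^{d} c_r q^{-r/2}},
\]
so the slope and intercept are explicit rational functions of $q$ once a polynomial $f$ has been fixed.

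It then remains to specialize the coefficients $(c_0, c_1, \dots, c_d)$. I would take exactly the trigonometric polynomial from \cite[(7.1.1)]{serre-rational} that yields the slope $0.6272$ for $q = 2$, verify its non-negativity by its explicit sum-of-squares (or Fej\'er-kernel) presentation, and then evaluate the two closed-form expressions above at $q = 3$ and $q = 4$ to obtain the claimed slopes $1.153$, $1.435$ and intercepts $11.67$, $21.75$. All of the arithmetic is with exact rationals, consistent with the parenthetical remark in the statement.

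The only real obstacle is arithmetic bookkeeping: confirming that Serre's fixed choice of $(c_r)$ is indeed the one that yields the three stated numerical bounds (as opposed to needing a separate optimization for $q = 3$ or $q = 4$), and rounding the resulting rational numbers upward to the two- and four-digit decimals printed in the lemma. Since the chosen polynomial is truly universal in $q$ and the derivation above does not depend on $q \in \{2,3,4\}$ except through the denominator $q^{-r/2}$, no genuinely new idea is required beyond Serre's.
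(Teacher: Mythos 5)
Your master inequality and its derivation are fine, and they are essentially the paper's starting point: summing a doubly positive trigonometric polynomial over the Frobenius angles and then using $N_r \geq N_1$ is exactly the content of \cite[Theorem~5.3.3]{serre-rational}, which the paper simply cites, and your closed forms for the slope and intercept agree with the paper's $1/\psi(q^{-1/2})$ and $1 + \psi(q^{1/2})/\psi(q^{-1/2})$.

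The gap is in the instantiation, precisely at the point you flagged and then dismissed: no single coefficient choice yields all three stated lines, so the assumed ``universality in $q$'' of the polynomial behind the $q=2$ bound fails. The $q=3,4$ constants in the lemma are exactly those produced by the short doubly positive square with $x_1=1$, $x_2=0.7$, $x_3=0.2$ (the paper's choice): here $c=4.06$, the numerator Fourier coefficients are $(3.68,\,2.8,\,1.8,\,0.89,\,0.28,\,0.04)$, and one computes slope and intercept approximately $(1.1525,\,11.668)$ at $q=3$ and $(1.4346,\,21.749)$ at $q=4$, matching $1.153g+11.67$ and $1.435g+21.75$. But this same polynomial evaluated at $q=2$ gives $\#C(\FF_2) \leq 0.826g + 5.345$ (Serre's older ``first choice'' shape), not $0.6272g + 9.562$; the $q=2$ line is a different, higher-degree optimization imported from \cite[(7.1.1)]{serre-rational}. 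Consequently, the polynomial that produces slope $0.6272$ at $q=2$ cannot be the one producing the printed $q=3,4$ constants, and your proposed verification step --- plug that one polynomial into the closed forms at $q=3,4$ and read off $1.153/11.67$ and $1.435/21.75$ --- would simply fail numerically. Nor can you argue the resulting (smaller-slope, larger-intercept) bound is ``at least as good'': it does not imply the stated inequalities for small and moderate $g$, which is exactly the range where the lemma is applied (the intercepts $11.67$ and $21.75$ drive the bounds $g \leq 9, 10$ in Lemma~\ref{lem:q34 genus bound}). The repair is the paper's: take $x=(1,0.7,0.2)$ for $q\in\{3,4\}$, and treat the $q=2$ case either by quoting Serre's (7.1.1) or by redoing the computation with his (different) coefficients.
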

\begin{proof}
For $q=2$, this is the ``third choice'' bound of \cite[(7.1.4)]{serre-rational}. For $q=3,4$,
we adapt the proof of the ``first choice'' bound of \cite[(7.1.1)]{serre-rational}).
For $x_1, x_2, \ldots \geq 0$, define $c = 1 + 2x_1^2 + 2x_2^2 + \cdots$ and consider the function
\[
f(\theta) = \frac{1}{c} (1 + 2x_1 \cos(\theta) + 2x_2 \cos(2\theta) + \cdots)^2 = 1 + 2 \sum_{n \geq 1} c_n \cos(n \theta).
\]
By construction, $f(\theta) \geq 0$ for all $\theta \in \RR$ and $c_n \geq 0$ for all $n$ (that is, $f$ is \emph{doubly positive} in the sense of Serre).
Define $\psi(t) = \sum_{n=1}^\infty c_n t^n$; then by \cite[Theorem~5.3.3]{serre-rational}. 
\[
\#C(\FF_q)\psi(q^{-1/2}) \leq g + \psi(q^{-1/2}) + \psi(q^{1/2}),
\]
or in other words
\[
\#C(\FF_q) \leq \frac{1}{\psi(q^{-1/2})} g + 1 + \frac{\psi(q^{1/2})}{\psi(q^{-1/2})}.
\]
For $x_1 = 1, x_2 = 0.7, x_3 = 0.2, x_4 = \cdots = 0$,
this yields the indicated results.
\end{proof}

The bounds produced by linear programming also include some correction terms counting points over extension fields.
We make one such bound explicit for $q=2$.
\begin{lemma} \label{lem:oesterle2}
Let $C$ be a curve of genus $g$ over $\FF_2$. For $d=1,2,\dots$, let $a_d$ be the number of closed points of degree $d$ on $C$. Then
\begin{equation} \label{eq:refined sum}
a_1 + 2a_2 (0.3366)+ 3a_3 (0.1382) + 4a_4 (0.0537)
\leq 0.8042g + 5.619.
\end{equation}
\end{lemma}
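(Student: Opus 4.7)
The plan is to apply the same Oesterl\'e/Serre framework used in Lemma~\ref{lem:oesterle}, but this time to retain the correction terms for closed points of degree $2, 3, 4$ rather than discarding them.

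First I would recall the general form of the linear programming inequality. Given any doubly positive trigonometric polynomial $f(\theta) = 1 + 2\sum_{n \geq 1} c_n \cos(n\theta) \geq 0$ with $c_n \geq 0$, applying $\sum_j f(\theta_j) \geq 0$ to the normalized Frobenius angles $\theta_j$ of $C$ and using $N_n = \sum_{d \mid n} d\,a_d = q^n + 1 - 2 q^{n/2} \sum_j \cos(n \theta_j)$ yields, after regrouping by divisor,
\[
\sum_{d \geq 1} d\, a_d \, \Psi_d \;\leq\; g \;+\; \psi(q^{-1/2}) \;+\; \psi(q^{1/2}),
\]
where $\psi(t) = \sum_{n \geq 1} c_n t^n$ and $\Psi_d = \sum_{m \geq 1} c_{md}\, q^{-md/2}$. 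Note $\Psi_1 = \psi(q^{-1/2})$.

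Second, I would take $f$ of the same shape as in Lemma~\ref{lem:oesterle}, namely
\[
f(\theta) \;=\; \frac{1}{c} \left( 1 + 2 \sum_{k=1}^N x_k \cos(k \theta) \right)^{\!2},
\qquad c \;=\; 1 + 2 \sum_{k=1}^N x_k^2,
\]
so that automatically $f \geq 0$ and the $c_n$ (obtained by convolution of $(1, x_1, \dots, x_N)$ with itself and normalizing) are nonnegative. I would choose a specific tuple of $x_k$'s, with $N \geq 4$ so that all four terms $\Psi_1, \Psi_2, \Psi_3, \Psi_4$ are nontrivial and the resulting normalized ratios match the stated constants; this is a small numerical search rather than a theoretical task. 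With such a choice, setting $q=2$ and dividing the master inequality by $\psi(2^{-1/2})$ gives
\[
a_1 + 2 a_2 \frac{\Psi_2}{\psi(2^{-1/2})} + 3 a_3 \frac{\Psi_3}{\psi(2^{-1/2})} + 4 a_4 \frac{\Psi_4}{\psi(2^{-1/2})} + \sum_{d \geq 5} d\, a_d \frac{\Psi_d}{\psi(2^{-1/2})}
\;\leq\; \frac{g}{\psi(2^{-1/2})} \;+\; 1 \;+\; \frac{\psi(2^{1/2})}{\psi(2^{-1/2})}.
\]
Since all $\Psi_d / \psi(2^{-1/2}) \geq 0$, the tail sum $d \geq 5$ can simply be dropped, yielding an inequality of the required shape.

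Finally, I would verify numerically that the particular choice of $(x_k)$ produces the exact rational coefficients $0.3366$, $0.1382$, $0.0537$, $0.8042$, $5.619$ displayed in \eqref{eq:refined sum} (with the understanding, as the authors note, that decimal expansions denote exact rationals). The only real obstacle is the bookkeeping: choosing $x_k$ so that the five normalized quantities simultaneously take the stated values, and confirming that $N$ can be taken small enough to make the computation transparent while still producing a bound strong enough for later applications. Once the right $x_k$ are in hand, the rest is a direct convolution and substitution.
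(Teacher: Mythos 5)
Your proposal follows essentially the same route as the paper: apply Serre's Theorem~5.3.3 to a doubly positive square of a trigonometric polynomial, keep the correction terms with $d \le 4$ after dividing by $\psi(2^{-1/2})$, and discard the tail $d \ge 5$ using nonnegativity. The paper's concrete choice is $x_1 = 1$, $x_2 = 0.85$, $x_3 = 0.25$, $x_4 = \cdots = 0$ (so $N=3$ suffices, since squaring already makes $c_4$ nonzero), which produces exactly the stated constants.
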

\begin{proof}
With notation as in the proof of Lemma~\ref{lem:oesterle}, define $\psi_d(t) = \sum_{n=1}^\infty c_{dn} t^{dn}$.
Then by \cite[Theorem~5.3.3]{serre-rational} again,
\[
\sum_{d=1}^\infty d a_d \psi_d(q^{-1/2}) \leq g + \psi(q^{-1/2}) + \psi(q^{1/2}),
\]
or in other words
\begin{equation} \label{eq:refined sum parametric}
a_1 + \sum_{d=2}^\infty d a_d \frac{\psi_d(q^{-1/2})}{\psi(q^{-1/2})} \leq \frac{1}{\psi(q^{-1/2})} g + 1 + \frac{\psi(q^{1/2})}{\psi(q^{-1/2})}.
\end{equation}
We apply this with $x_1 = 1, x_2 = 0.85, x_3 = 0.25, x_4 = \cdots = 0$.
This yields \eqref{eq:refined sum} by discarding the terms $d \geq 5$ in \eqref{eq:refined sum parametric}.
\end{proof}

\section{Numerical estimates}

We next apply the bounds on rational points to bound the genera of function fields occurring in a purely geometric extension with relative class number 1. We will later take a closer account of the degree of the extension; see
\S\ref{sec:analysis by degree}.

For the remainder of the paper, let $F'/F$ be a purely geometric extension of degree $d$
such that $g_{F'} > g_F$ and $h_{F'/F} = 1$. 
For brevity, we write $q,g,g'$ in place of $q_F, g_F, g_{F'}$.
Let $A$ be the Prym variety of $C' \to C$; 
then $A$ has order 1, so Lemma~\ref{lem:simple order 1} implies $q \leq 4$. By Riemann--Hurwitz,
\begin{equation} \label{eq:riemann-hurwitz1}
\dim(A) = g'-g = (d-1)(g-1) + \delta \qquad \mbox{with} \qquad \delta \geq 0,
\end{equation}
with equality if and only if $C' \to C$ is \'etale.
Since $T_{C',q^i} = T_{C,q^i} + T_{A,q^i}$, we have
\begin{equation} \label{eq:C to Cprime count}
0 \leq \#C'(\FF_{q^i}) = q^i + 1 - T_{C',q^i} = q^i+1-T_{C,q^i} - T_{A,i} = \#C(\FF_{q^i}) - T_{A,q^i}
\end{equation}
for each positive integer $i$, and hence
\begin{equation} \label{eq:separate curve count}
T_{A,q^i} \leq \#C(\FF_{q^i}) \qquad (i=1,2,\dots).
\end{equation}

\begin{lemma} \label{lem:q34 genus bound}
If $q > 2$, then $g \leq 6$.
\end{lemma}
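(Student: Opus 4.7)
The plan is to play a lower bound on $T_{A,q}$ (from Lemma~\ref{L:t2 plus t4}) against an upper bound on $\#C(\FF_q)$, using \eqref{eq:separate curve count} as the bridge. Since $q\in\{3,4\}$, Lemma~\ref{L:t2 plus t4} pins down $T_{A,q}=q\dim(A)$ exactly. The Riemann--Hurwitz identity \eqref{eq:riemann-hurwitz1}, together with $d\ge 2$ and the harmless assumption $g\ge 2$ (for $g\le 1$ the conclusion is immediate), gives $\dim(A)=g'-g\ge (d-1)(g-1)\ge g-1$. Combined with \eqref{eq:separate curve count}, this yields
\[
q(g-1)\le T_{A,q}\le \#C(\FF_q).
\]

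Next I would feed in the Oesterl\'e-style bounds from Lemma~\ref{lem:oesterle} to handle large genera. For $q=3$, the inequality $3(g-1)\le 1.153g+11.67$ forces $g\le 7$; for $q=4$, the inequality $4(g-1)\le 1.435g+21.75$ forces $g\le 10$. The few remaining small-genus cases are then eliminated by direct inspection of Table~\ref{table:manypoints bounds}: for $q=3$ one observes that $3(g-1)$ exceeds the tabulated bound on $\#C(\FF_3)$ already at $g=6$ (since $15>14$), and for $q=4$ one observes that $4(g-1)$ exceeds the tabulated bound on $\#C(\FF_4)$ starting at $g=7$ (since $24>21$). So in fact $g\le 5$ when $q=3$ and $g\le 6$ when $q=4$, and in either case $g\le 6$.

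There is no real obstacle: the argument is purely an inequality chase once Lemmas~\ref{L:t2 plus t4} and~\ref{lem:oesterle} are available. The one mild point of friction is that the chain is tightest at $d=2$, since $\dim(A)$ only grows linearly with $d-1$; but even this worst case yields the desired bound. The borderline case $q=4$, $g=6$, where $4(g-1)=20$ exactly matches the tabulated upper bound $\#C(\FF_4)\le 20$, is precisely what prevents us from proving the sharper statement $g\le 5$ by this method---a quantitative improvement would presumably require bringing in $T_{A,q^2}$ as well.
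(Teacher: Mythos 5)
Your proposal is correct and follows essentially the same route as the paper: the chain $q(g-1)\le q(g'-g)=T_{A,q}\le \#C(\FF_q)$ from Lemma~\ref{L:t2 plus t4}, \eqref{eq:riemann-hurwitz1}, and \eqref{eq:separate curve count}, then Lemma~\ref{lem:oesterle} for a first genus bound and Table~\ref{table:manypoints bounds} to eliminate the remaining cases. Your arithmetic is right (and in fact slightly sharper than needed, e.g.\ giving $g\le 5$ for $q=3$), so the argument matches the paper's proof of the lemma.
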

\begin{proof}
By combining Lemma~\ref{L:t2 plus t4}, Lemma~\ref{lem:oesterle}, \eqref{eq:riemann-hurwitz1}, and \eqref{eq:separate curve count}, we obtain
\begin{equation} \label{eq:main bound}
q(g-1) \leq q(g'-g) \leq \#C(\FF_q) \leq  \begin{cases} 1.153g + 11.67 & (q=3) \\
1.435g + 21.75 & (q=4). \end{cases}
\end{equation}
Comparing the ends of this equation yields
\[
g \leq \begin{cases} (11.67 + 3)/(3 - 1.153) \leq 7.95 & (q=3) \\ (21.75 + 4)/(4 - 1.435) \leq 10.04 & (q=4);
\end{cases}
\]
hence $g \leq 7$ if $q=3$ and $g \leq 10$ if $q = 4$. Replacing the right-hand side of \eqref{eq:main bound} with the explicit bounds given in Table~\ref{table:manypoints bounds}, 
we may eliminate the case $g = 7$. 
\end{proof}

For $q=2$, it is not enough to control $\#C(\FF_2)$ because there exists a simple abelian variety of order 1 with trace 0 (namely \avlink{2.2.a\_ae}). Instead, we use a bound modeled on Lemma~\ref{lem:oesterle2}.
For $A$ an abelian variety over $\FF_2$, define its \emph{excess} as
\[
1.3366 T_{A,2} + 0.3366 T_{A,4} + 0.1137 (T_{A,8} - T_{A,2}) + 0.0537 (T_{A,16} - T_{A,4}) - 1.5612g.
\]

\begin{lemma} \label{lem:excess}
For $A$ an abelian variety of order $1$ and dimension $g$ over $\FF_2$, the excess of $A$ is nonnegative.
\end{lemma}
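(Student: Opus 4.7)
My plan is to reduce to the case of simple $A$ and then apply Lemma \ref{lem:simple order 1 traces}. The excess is additive in $A$ (dimensions and Frobenius traces being additive under isogeny to a product), and $\#A(\FF_2) = 1$ forces every simple factor of $A$ to also have order $1$, since positive integers multiplying to $1$ must each equal $1$. Thus it suffices to prove the lemma when $A$ is simple.

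For simple $A$ of order $1$, Lemma \ref{lem:simple order 1}(c) parametrizes $A$ by a root of unity $\eta$ of some order $n$. The exceptional values $n \in \{1, 2, 7, 30\}$ yield only finitely many $A$ (bounded dimension), whose excess I would verify directly from LMFDB data, exactly as in the proof of Lemma~\ref{L:t2 plus t4}. For every other $n$, Lemma \ref{lem:simple order 1 traces} gives $g = \phi(n)$ together with explicit expressions for $T_{A, 2^k}$ ($k = 1, 2, 3, 4$) as integer linear combinations of $\phi(n)$ and $t_i := \Trace_{\QQ(\eta)/\QQ}(\eta^i)$. Substituting into the definition and collecting, one obtains
\[
\mathrm{excess}(A) = 0.112\, \phi(n) - 0.1012\, t_1 - 0.1656\, t_2 + 0.1011\, t_3 + 0.0537\, t_4,
\]
so the task reduces to showing that the $t_i$-contribution cannot overwhelm the small but strictly positive leading term $0.112\,\phi(n)$.

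To control the $t_i$ I would use the Ramanujan-sum evaluation $t_i = \mu(n/d_i)\,\phi(n)/\phi(n/d_i)$ with $d_i = \gcd(n, i)$. For $n$ coprime to $6$, every $d_i = 1$ and $t_i = \mu(n)$; the $t_i$-contribution is then exactly $-0.112\,\mu(n)$, so $\mathrm{excess}(A) = 0.112(\phi(n) - \mu(n)) \geq 0$ trivially. For $n$ divisible by $2$ or by $3$, a case analysis on $\gcd(n, 12)$ determines which of the $t_i$ are nonzero (using the vanishing of $\mu$ at non-squarefree arguments) and writes each nonzero $t_i$ as a controlled multiple of $\phi(n)/\phi(n/d_i)$; in each case the excess reduces to $(\text{positive constant})\cdot \phi(n) + O(1)$, which is nonnegative once $\phi(n)$ exceeds a small explicit threshold. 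The main obstacle is tightness: at $n = 12$ the excess works out to only about $0.0094$, so the decimals in the definition must have been chosen deliberately for this marginal case, and the finitely many small-$n$ verifications that remain should accordingly be carried out in exact rational arithmetic (as the paper notes its decimals represent) rather than via floating-point computation.
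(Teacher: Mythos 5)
Your proposal is correct and follows essentially the same route as the paper: reduce to simple $A$, dispose of the small and exceptional cases by direct verification, and for the remaining $n$ expand the excess as $0.112\,\phi(n)$ plus a linear combination of the $t_i$ controlled by a finite case analysis on the residue of $n$. In fact your expanded coefficients $(-0.1012,\,-0.1656,\,+0.1011,\,+0.0537)$ are the correct ones --- they reproduce every excess value in Table~\ref{table:simple AVs of order 1}, whereas the paper's displayed intermediate formula reads $-0.0475t_1+0.171t_2$, which appears to be a typo --- and your observation that the $t_i$-coefficients sum to $-0.112$, so that the excess equals $0.112(\phi(n)-\mu(n))$ for $n$ coprime to $6$, is a clean way to see why the constants were chosen. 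The only minor imprecision is that the tuple $(t_1,\dots,t_4)$ is governed by slightly more than $\gcd(n,12)$: since $\phi(n)/\phi(n/d_i)$ depends on whether $8\mid n$ and $9\mid n$, one should work with $n$ modulo $2^4 3^2$ as the paper does, though this changes nothing about the finiteness or outcome of the check.
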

\begin{proof}
We may assume that $A$ is simple; define $n$ as in Lemma~\ref{lem:simple order 1}.
We again treat the case $g \leq 6$ using LMFDB; see Table~\ref{table:simple AVs of order 1}.
For $g \geq 7$, we have $g = \phi(n)$; per Lemma~\ref{lem:simple order 1 traces} we can write the excess as
\[
0.112g - 0.1012t_1 - 0.1656t_2 + 0.1011t_3 + 0.0537t_4.
\]
For $g \in \{7,8\}$, we have $n \in \{15,16,20,24,30\}$;
we compute the excess using \eqref{eq:bound on trace} to obtain a lower bound of $0.4807$.
For $g \geq 9$, we apply \eqref{eq:bound on trace} to deduce that $|t_d| \leq d$
and then obtain a lower bound of $0.112g - 0.9505 \geq 0.112\cdot 9 - 0.9505 \geq 0.0575$. 
\end{proof}

\begin{table}[ht]
\tiny
\begin{tabular}{c||c||c|c|c|c||c|c}
$A$ & $n$ & $T_{A,2}$ & $T_{A,4}$ & $T_{A,8}$ & $T_{A,16}$ & $T_{A,2} + T_{A,4}$ & excess\\
\hline
\hline
\avlink{1.2.ac} & $2$ & $2$ & $0$ & $-4$ & $-8$ & $2$ & 0.0002 \\
\hline
\avlink{2.2.a\_ae} & $1$ & $0$ & $8$ & $0$ & $16$ & $8$ & 0.0000 \\
\avlink{2.2.ad\_f} & $3$ & $3$ & $-1$ & $0$ & $7$ & $2$ & 0.6393 \\
\avlink{2.2.ac\_c} & $4$ & $2$ & $0$ & $8$ & $8$ & $2$ & 0.6626 \\
\avlink{2.2.ab\_ab} & $6$ & $1$ & $3$ & $10$ & $-1$ & $4$ & 0.0325 \\
\hline
\avlink{3.2.ad\_c\_b} & $7$ & $3$ & $5$ & $6$ & $-11$ & $8$ & 0.4911 \\
\avlink{3.2.ae\_j\_ap} & $7$ & $4$ & $-2$ & $1$ & $10$ & $2$ & 0.2929 \\
\hline
\avlink{4.2.af\_m\_au\_bd} & $5$ & $5$ & $1$ & $5$ & $-3$ & $6$ & 0.5600 \\
\avlink{4.2.ae\_g\_ae\_c} & $8$ & $4$ & $4$ & $4$ & $0$ & $8$ & 0.2332 \\
\avlink{4.2.ad\_c\_a\_b} & $10$ & $3$ & $5$ & $9$ & $13$ & $8$ & 0.5598 \\
\avlink{4.2.ae\_f\_c\_al} & $12$ & $4$ & $6$ & $-2$ & $-2$ & $10$ & 0.0094 \\
\avlink{4.2.ae\_e\_h\_av} & $30$ & $4$ & $8$ & $-5$ & $4$ & $12$ & 0.5563 \\
\avlink{4.2.af\_n\_az\_bn} & $30$ & $5$ & $-1$ & $5$ & $7$ & $4$ & 0.5312 \\
\hline
\avlink{6.2.ag\_p\_av\_y\_abn\_cn} & $9$ & $6$ & $6$ & $9$ & $-6$ & $12$ & 0.3687 \\
\avlink{6.2.af\_j\_ah\_d\_ab\_ab} & $14$ & $5$ & $7$ & $11$ & $15$ & $12$ & 0.7838 \\
\avlink{6.2.ag\_p\_at\_g\_bb\_acj} & $18$ & $6$ & $6$ & $3$ & $18$ & $12$ & 0.9753 \\
\end{tabular}
\medskip
\caption{Simple abelian varieties over $\FF_2$ of order 1 and dimension at most 6, from \href{https://www.lmfdb.org/Variety/Abelian/Fq/?q=2&simple=yes&g=1-6&abvar_point_count=\%5B1\%5D&search_type=List}{LMFDB}. For the definitions of $n$ and the excess, see Lemma~\ref{lem:simple order 1} and  Lemma~\ref{lem:excess}.}
\label{table:simple AVs of order 1}
\end{table}

\vspace{-0.5cm}

\begin{lemma} \label{lem:genus bound q2}
For $q=2$, we have
\[
g' \leq 0.4313 \#C(\FF_2) + 1.5152g + 3.6.
\]
\end{lemma}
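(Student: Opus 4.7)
The plan is to combine Lemma~\ref{lem:excess}, applied to the Prym variety $A$ (which over $\FF_2$ has dimension $g'-g$ and order $1$), with Lemma~\ref{lem:oesterle2} applied to $C$, using the pointwise inequality $T_{A,2^i} \leq \#C(\FF_{2^i})$ from \eqref{eq:separate curve count} as the bridge.

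First I would regroup the excess expression so that only $T_{A,2}, T_{A,4}, T_{A,8}, T_{A,16}$ appear with nonnegative coefficients, obtaining
\[
1.2229\, T_{A,2} + 0.2829\, T_{A,4} + 0.1137\, T_{A,8} + 0.0537\, T_{A,16} \;\geq\; 1.5612(g'-g).
\]
Substituting $T_{A,2^i} \leq \#C(\FF_{2^i}) = \sum_{d \mid i} d\, a_d$ (where $a_d$ is the number of closed points of $C$ of degree $d$) and collecting terms transforms this into
\[
1.6732\, a_1 + 2(0.3366)\, a_2 + 3(0.1137)\, a_3 + 4(0.0537)\, a_4 \;\geq\; 1.5612(g'-g).
\]

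The crucial observation is that the coefficient $3(0.1137)$ of $a_3$ on the left is strictly less than the coefficient $3(0.1382)$ of $a_3$ appearing in Lemma~\ref{lem:oesterle2}, while the coefficients of $a_2$ and $a_4$ agree exactly once an $a_1$ is split off. Enlarging the $a_3$ coefficient via $a_3 \geq 0$ and invoking Lemma~\ref{lem:oesterle2} to bound the bracketed combination then yields
\[
1.5612(g'-g) \;\leq\; 0.6732\, a_1 + 0.8042\, g + 5.619.
\]
Dividing through by $1.5612$, adding $g$ to both sides, and recognizing $a_1 = \#C(\FF_2)$ produces the stated bound (with constants rounded up in the last decimal place). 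The only substantive point is the alignment of the two sets of linear-programming coefficients; this reflects that the choices of $(x_1,x_2,x_3,x_4)$ in the proofs of Lemmas~\ref{lem:excess} and~\ref{lem:oesterle2} were tuned so that the residual $a_3$ contribution from the excess is dominated by the $a_3$ term of the refined Oesterl\'e bound. Beyond this numerical alignment, no work beyond arithmetic is required.
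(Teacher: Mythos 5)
Your proof is correct and follows essentially the same route as the paper: Lemma~\ref{lem:excess} applied to the Prym variety, the bound $T_{A,2^i} \leq \#C(\FF_{2^i})$ from \eqref{eq:separate curve count}, and Lemma~\ref{lem:oesterle2}, with the same arithmetic leading to $1.5612(g'-g) \leq 0.6732\,\#C(\FF_2) + 0.8042g + 5.619$. Your regrouping of the excess into nonnegative coefficients of the $T_{A,2^i}$ and the observation that $0.1137 < 0.1382$ (so the $a_3$ term is absorbed by the refined Oesterl\'e bound) merely make explicit two steps the paper leaves implicit.
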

\begin{proof}
We combine Lemma~\ref{lem:oesterle2}, \eqref{eq:separate curve count}, and Lemma~\ref{lem:excess} to obtain
\begin{align*}
1.5612 (g'-g) &\leq 1.3366 T_{A,q} + 0.3366 T_{A,q^2} + 0.1137 (T_{A,q^3} - T_{A,q}) \\
&\qquad + 0.0537 (T_{A,q^4} - T_{A,q^2}) \\ 
&= (1.3366 - 0.1137) T_{A,q} + (0.3366 - 0.0537) T_{A,q^2} \\
&\qquad + 0.1137 T_{A,q^3} + 0.0537 T_{A,q^4} \\ 
&\leq (1.3366 - 0.1137) \#C(\FF_q) + (0.3366 - 0.0537) \#C(\FF_{q^2}) \\
&\qquad + 0.1137 \#C(\FF_{q^3}) + 0.0537 \#C(\FF_{q^4}) \\ 
&= 1.3366 a_1 + 0.3366 (a_1 + 2a_2) + 0.1137 (3a_3) + 0.0537 (4a_4) \\
&= 1.6732a_1 + 0.3366 (2a_2) + 0.1137 (3a_3) + 0.0537 (4a_4) \\
&\leq 0.6732\#C(\FF_2) + 0.8042g + 5.619,
\end{align*}
which yields the claimed inequality.
\end{proof}

\begin{cor} \label{cor:genus bound q2}
For $q=2$, we have $g \leq 40$. Moreover, if $d \geq 3$ then $g \leq 6$; if $d \geq 4$ then $g \leq 4$; if $d \geq 5$ then $g \leq 3$; and if $d \geq 6$ then $g \leq 2$.
\end{cor}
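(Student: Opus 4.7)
The plan is to combine the upper bound on $g'$ from Lemma~\ref{lem:genus bound q2} with the lower bound on $g'$ coming from Riemann--Hurwitz, and then to feed in explicit bounds on $\#C(\FF_2)$. From \eqref{eq:riemann-hurwitz1} we have
\[
g' \geq g + (d-1)(g-1),
\]
so combining with Lemma~\ref{lem:genus bound q2} yields
\[
g + (d-1)(g-1) \;\leq\; 0.4313\,\#C(\FF_2) + 1.5152\,g + 3.6,
\]
equivalently
\[
(d-1)(g-1) \;\leq\; 0.4313\,\#C(\FF_2) + 0.5152\,g + 3.6. \tag{$\ast$}
\]

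For the first statement ($d=2$ and $g \leq 40$), I would substitute the unconditional bound $\#C(\FF_2) \leq 0.6272g + 9.562$ from Lemma~\ref{lem:oesterle} into $(\ast)$, which gives a linear inequality of the form $0.4848\,g \leq 0.2705\,g + 8.724$, so $g \leq 40.7$.

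For the finer assertions, I would argue case by case on $d \in \{3,4,5,6\}$, now inserting the sharp small-genus bounds from Table~\ref{table:manypoints bounds} rather than Lemma~\ref{lem:oesterle}. For $d = 3$, $(\ast)$ reads $1.4848\,g \leq 0.4313\,\#C(\FF_2) + 5.6$; this fails for $g = 7$ (using $\#C(\FF_2) \leq 10$) and for $g = 8$ (using $\#C(\FF_2) \leq 11$), and is even worse for $g \geq 9$ by Lemma~\ref{lem:oesterle}, so $g \leq 6$. The same routine handles $d = 4, 5, 6$: in each case, $(\ast)$ rules out the first value of $g$ above the claimed bound (e.g.\ $g = 5$ for $d = 4$, $g = 4$ for $d = 5$, $g = 3$ for $d = 6$), and monotonicity in $d$ together with Lemma~\ref{lem:oesterle} takes care of all larger $g$.

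There is no real obstacle here; the only thing to watch is to switch from the asymptotic linear programming bound (Lemma~\ref{lem:oesterle}) to the table (Table~\ref{table:manypoints bounds}) in the small-$g$ regime, since otherwise the bounds are not quite tight enough to give $g \leq 6$ when $d=3$. The argument is entirely arithmetic once $(\ast)$ is in hand.
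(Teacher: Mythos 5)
Your proposal is correct and follows exactly the route of the paper, whose proof is just the one-line citation of Lemma~\ref{lem:oesterle}, \eqref{eq:riemann-hurwitz1}, Lemma~\ref{lem:genus bound q2}, and Table~\ref{table:manypoints bounds}; your inequality $(\ast)$ and the switch to the table bounds for small $g$ are precisely how those ingredients combine, and your arithmetic checks out (e.g.\ $0.2143\,g \leq 8.724$ gives $g \leq 40$, and $1.4848\cdot 7 = 10.39 > 0.4313\cdot 10 + 5.6$ kills $(d,g)=(3,7)$). You have simply written out the details the paper leaves implicit.
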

\begin{proof}
By \eqref{eq:riemann-hurwitz1} and Lemma~\ref{lem:genus bound q2},
\[
(d - 1.5152)g \leq 0.4313 \#C(\FF_2) + (d + 2.6).
\]
Taking $d = 2$ and using the bound on $\#C(\FF_2)$ from Lemma~\ref{lem:oesterle} yields $g \leq 40$.
For $d \geq 3$ we obtain $g \leq 8$; we then use Table~\ref{table:manypoints bounds} to obtain the remaining bounds.
\end{proof}

\section{Exhaustion over Weil polynomials}
\label{sec:exhaustion}

We next describe an exhaustive search over Weil polynomials which rules out some additional pairs $(g,g')$; compare \cite[Theorem~7.2.1]{serre-rational} for an example in the context of bounding rational points on curves. This will yield Theorem~\ref{T:purely geometric bounds1};
for $g > 1$, we will do better with constraints depending on $d$ (see \S\ref{sec:analysis by degree}).

We first make a list of candidate Weil polynomials for $A$. For $q > 2$ this consists of the single polynomial $(T^2 - qT + q)^{g'-g}$.
For $q=2$, we identify isogeny classes of simple abelian varieties $A$ of order 1 such that
for $i=1,2$, $T_{A,2^i}$ and $T_{A,4}$ is at most the value listed in Table~\ref{table:manypoints bounds} for the pair $(g, 2^i)$, and moreover
\begin{align*}
&{T_{A,2} + 0.3366 (T_{A,4}-T_{A,2}) + 0.1137 (T_{A,8} - T_{A,2}) + 0.0537 (T_{A,16} - T_{A,4})} \\
&\qquad \leq 0.8042g + 5.619;
\end{align*}
these are all necessary conditions by Lemma~\ref{lem:oesterle2} and \eqref{eq:separate curve count}.

We next identify candidate Weil polynomials for $C$ for which the resulting values of $\#C(\FF_q)$ (and $\#C(\FF_{q^2})$ for $q=2$) are consistent with at least one choice of $A$,
and eliminate those that are ruled out by any of the following.
\begin{itemize}
\item
Bounds on point counts from Table~\ref{table:manypoints bounds}.
\item
The \emph{positivity condition}: the number of degree-$i$ places on $C$ must be nonnegative for all $i \geq 1$.
\item
Data from LMFDB (genus $\leq 3$), \cite{xarles} (genus 4), and \cite{dragutinovic} (genus 5)
indicating which curves have a particular Weil polynomial.
\item
The \emph{resultant-$1$ and resultant-$2$ criteria} of Serre \cite[Theorem~2.4.1]{serre-rational} as extended by Howe--Lauter \cite[Proposition~2.8]{howe-lauter2}. The resultant-2 criterion forces $C$ to occur as a double cover of another curve, whose Weil polynomial can sometimes be ruled out.
(Compare Corollary~\ref{cor:Delta sequence}.)
\end{itemize}
Finally, we exhaust over pairs of candidate Weil polynomials for $C$ and $A$ to confirm that the resulting Weil polynomial for $C'$ is not ruled out. This yields the following.

\begin{lemma} \label{lem:genus bound q2 refinement}
For $q= 2$, for $g = 0,\dots,6$ we have $g' \leq 4,6,8,10,12,14,16$, respectively.
Hence by \eqref{eq:riemann-hurwitz1}, if $d \geq 4$ then $g \leq 3$; and if $d \geq 5$ then $g \leq 2$.
\end{lemma}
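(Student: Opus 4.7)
The plan is to carry out the five-step exhaustion described in the paragraphs preceding the statement, instantiated for each $g \in \{0,1,\ldots,6\}$. Corollary~\ref{cor:genus bound q2} and Lemma~\ref{lem:genus bound q2}, combined with the bounds in Table~\ref{table:manypoints bounds}, bound $g'$ (and hence $\dim A = g' - g$) uniformly from above in terms of $g$, so we face a finite search; the task is to prune it sharply enough that it terminates and then to read off the maximum surviving $g'$ for each $g$.

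First I would list candidate Weil polynomials $P_A$ for the Prym variety. Since $A$ has order $1$, it is isogenous to a product of simple order-$1$ factors, and by Lemma~\ref{lem:simple order 1} each such factor is either $\avlink{1.2.ac}$ or is determined by a root of unity $\eta$ of order $n \notin \{1,2\}$, with dimension $\phi(n)$ (or $2\phi(n)$ when $n \in \{7,30\}$). Enumerating multisets whose total dimension stays within the \emph{a priori} bound is feasible because the per-factor dimension grows with $n$. I would then filter by the conditions $T_{A,2^i} \leq \#C(\FF_{2^i})$, using the upper bounds from Table~\ref{table:manypoints bounds} for $i = 1, 2$, and by the explicit weighted-sum inequality displayed just before the statement. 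Next, for each $g$ I would enumerate candidate Weil polynomials $P_C$ of degree $2g$ with Weil-bounded coefficients, filtering by Table~\ref{table:manypoints bounds}, by positivity of the number of degree-$i$ places, by the Serre resultant-$1$ and Howe--Lauter resultant-$2$ conditions, and by the LMFDB, \cite{xarles}, and \cite{dragutinovic} realizability data when $g \leq 5$. For each compatible pair $(P_C, P_A)$ (meaning $\#C(\FF_{2^i}) \geq T_{A,2^i}$ for all relevant $i$), I would check that $P_{C'} = P_C \cdot P_A$ itself passes all the same point-count, positivity, and resultant filters, now interpreted for a curve of genus $g' = g + \dim A$. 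The largest $g'$ that survives, for each fixed $g$, is the claimed bound.

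The degree deductions are then immediate from \eqref{eq:riemann-hurwitz1}. If $d \geq 4$ and $g \geq 4$, then $g' - g \geq (d-1)(g-1) \geq 9$, so $g' \geq 13$, contradicting $g' \leq 12$ for $g = 4$; hence $g \leq 3$. If $d \geq 5$ and $g \geq 3$, then $g' - g \geq 8$, so $g' \geq 11$, contradicting $g' \leq 10$ for $g = 3$; hence $g \leq 2$.

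The main obstacle I anticipate is combinatorial blow-up in the $P_A$ enumeration once $\dim A$ reaches the upper end of its allowed range: the number of multisets of simple order-$1$ components, and the need to check each against the $P_C$ side, could be uncomfortably large. The excess inequality of Lemma~\ref{lem:excess} and the refined place-count inequality of Lemma~\ref{lem:oesterle2} do the heavy cutting, because they forbid $A$ from being assembled predominantly from small-dimension, high-trace factors, while Table~\ref{table:manypoints bounds} rules out the complementary regime on the $C$ side; the under-two-hour runtime quoted in the introduction indicates that in practice these constraints suffice to make the search tractable.
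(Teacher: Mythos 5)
Your overall strategy coincides with the paper's: the a priori bounds $g' \leq 4,7,9,11,13,15,17$ for $g = 0,\dots,6$ come from Lemma~\ref{lem:genus bound q2} together with Table~\ref{table:manypoints bounds}, the exhaustion of \S\ref{sec:exhaustion} is then used to eliminate the top value of $g'$ for each $g\geq 1$, and the degree statements follow from \eqref{eq:riemann-hurwitz1}; your degree deduction is complete because Corollary~\ref{cor:genus bound q2}, which you invoke, already gives $g \leq 4$ for $d \geq 4$ and $g \leq 3$ for $d \geq 5$, so only the single genus you contradict needs to be excluded in each case.

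There is, however, a concrete error in your description of the candidate list for the Prym factors, and exhaustiveness of that list is exactly what turns ``no surviving pair with $g'$ above the claimed value'' into a proof. First, the case $n=1$ of Lemma~\ref{lem:simple order 1}(c) does not collapse into \avlink{1.2.ac} (that is the case $n=2$): it gives the simple supersingular surface \avlink{2.2.a\_ae}, with $T_{A,2}=0$ and $T_{A,4}=8$. This is precisely the trace-zero variety the paper highlights as the reason that controlling $\#C(\FF_2)$ alone is insufficient when $q=2$, and it genuinely occurs as a Prym factor in the relevant range (e.g.\ for $(g,g')=(3,5)$ in Table~\ref{table:geometric bounds}); your enumeration omits it. Second, for $n \in \{7,30\}$ the two irreducible Weil polynomials each have dimension $\phi(n)/2$, namely $3$ and $4$, not $2\phi(n)$: in these cases $\QQ(\alpha)=\QQ(\eta)$, which is why there are two polynomials rather than one. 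With your dimension bookkeeping these factors would be skipped whenever $\dim A < 12$, yet they too occur as Pryms in the range under consideration (e.g.\ \avlink{3.2.ad\_c\_b} and \avlink{4.2.ae\_e\_h\_av} in Table~\ref{table:geometric bounds}). Both slips are easily repaired---enumerate the simple order-$1$ isogeny classes for all $n \geq 1$ directly from Lemma~\ref{lem:simple order 1}, or from LMFDB as in Table~\ref{table:simple AVs of order 1}---but as written the candidate list for $A$ is incomplete, so the search you describe would not establish the claimed upper bounds on $g'$.
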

\begin{proof}
From Lemma~\ref{lem:genus bound q2}, for $g=0,\dots,6$ we obtain $g' \leq 4, 7, 9, 11, 13, 15, 17$, respectively.
We rule out the pairs 
$(g,g') \in \{(1, 7), (2, 9), (3, 11), (4, 13), (5, 15), (6, 17) \}$
by exhausting over Weil polynomials as described above.
\end{proof}

We can now prove Theorem~\ref{T:purely geometric bounds1} as follows.
By \eqref{eq:main bound}, Lemma~\ref{lem:genus bound q2 refinement}, and Table~\ref{table:manypoints bounds},
for $(q,g) = (2,0), (2,1), (3,0), (3,1), (4,0), (4,1)$ we have respectively
\[
g' \leq 4, 6, 1, 3, 1, 3.
\]
We may settle all cases by table lookups except $(q,g,g') = (2,1,6)$, which we settle as follows.
\begin{itemize}
\item
The isogeny class \avlink{6.2.ad\_c\_a\_a\_m\_abg} is ruled out by \cite[Proposition~5.2]{faber-grantham-howe}, whose proof we summarize.
By the resultant-$2$ criterion (compare Remark~\ref{R:resultant degree 2}), $C'$ is a double cover of a curve $C_0$ with real Weil polynomial $T^2 - 2T - 2$; this is inconsistent with $\#C_0(\FF_2) = 1, \#C'(\FF_4) = 0$.
\item
The isogeny class \avlink{6.2.ad\_c\_a\_f\_am\_q} occurs for a cyclic \'etale quintic cover of a genus-2 curve listed in Table~\ref{table:geometric extensions big q2b1}
(see also Remark~\ref{R:unique curve}).
\end{itemize}

\begin{remark} \label{R:unique curve}
Table~\ref{table:geometric extensions big q1} includes a column counting Jacobians in the isogeny class of $J(C')$.
This can be obtained by table lookups except for \avlink{6.2.ad\_c\_a\_f\_am\_q}, for which Table~\ref{table:geometric extensions big q1} reports a \emph{unique} Jacobian; this will be proved in \cite[Lemma~10.2]{part2}.
\end{remark}

\section{Additional constraints on Weil polynomials}
\label{sec:analysis by degree}

We assume hereafter that $g > 1$
and introduce constraints on the Weil polynomials of $C$ and $C'$ based on $d$.
Note that none of these presumes $h_{F'/F} = 1$, and so may be applicable in other cases of interest.

We start with the full form of Riemann--Hurwitz:
\begin{equation} \label{eq:riemann-hurwitz}
2g'-2 = d(2g-2) + 2\delta, \qquad 2\delta = \sum_P (e_P - 1)
\end{equation}
where $P$ runs over geometric points of $C'$ and $e_P$ is the ramification index at $P$.

Let $t$ denote the number of geometric ramification points, i.e., the number of $P$ for which $e_P > 1$.
Then $t=0$ iff $\delta = 0$,
and $t \leq 2\delta$ in general. If $q$ is even, then $e_P$ can never equal 2, so
$t \leq \delta$; in particular,
\begin{equation} \label{eq:unique ramification point}
\delta = 1 \Longrightarrow t = 1 \Longrightarrow \#C'(\FF_q) \geq 1
\end{equation}
because the unique ramification point of $C'$ is $\FF_q$-rational, and similarly
\begin{equation} \label{eq:unique ramification point2}
\delta=2\Longrightarrow \#C'(\FF_{q^2}) \geq 1,
\qquad
t=2 \Longrightarrow \#C'(\FF_{q^2}) \geq 2.
\end{equation}

If $C' \to C$ is cyclic of prime degree $d = p \mid q$, then
the Deuring--Shafarevich formula holds (e.g., see \cite{shiomi}): for $\gamma_C, \gamma_{C'}$ the $p$-ranks of $C, C'$,
\begin{equation} \label{eq:deuring-shafarevich}
\gamma_{C'}-1 = d(\gamma_C - 1) + t
\end{equation}

If $\delta=0$ and $C' \to C$ is cyclic (e.g., if $d=2$), then by class field theory,
\begin{equation} \label{eq:cyclic}
\#J(C)(\FF_q) \equiv 0 \pmod{d}.
\end{equation}

For small $d$, we have the following additional constraints (building on \cite[Lemma~8]{howe-lauter1}).
\begin{itemize}
\item
When $d=2$, every $\FF_{q^i}$-rational point of $C$ lifts to either an $\FF_{q^i}$-rational ramification point or two $\FF_{q^{2i}}$-rational points of $C'$. Hence 
\begin{equation} \label{eq:double cover without trace}
\#C'(\FF_{q^{2i}}) \geq 2\#C(\FF_{q^i}) - t;
\end{equation}
by \eqref{eq:C to Cprime count} and \eqref{eq:separate curve count}, this yields
\begin{equation} \label{eq:double cover consequence}
2T_{A,q^i} + T_{A,q^{2i}} - t \leq 2\#C(\FF_{q^i}) + T_{A,q^{2i}} - t \leq \#C(\FF_{q^{2i}}).
\end{equation}
For $i = 2j-1$ odd, every degree $i$-place of $C'$ projects to a degree-$i$ place of $C$.
If $t \leq 2$, then for $i>1$ these points occur in pairs in fibers, and so
\begin{equation} \label{eq:double cover parity}
t \leq 2 \Longrightarrow \#C'(\FF_{q^{2j-1}}) \equiv \#C'(\FF_q) \pmod{2} \qquad (j > 0).
\end{equation}
\item
When $d=3$, every $\FF_{q^i}$-rational point of $C$ lifts to either at least one $\FF_{q^i}$-rational point or three $\FF_{q^{3i}}$-rational points of $C'$.
Hence $\#C'(\FF_{q^{3i}}) - \#C'(\FF_{q^i}) \geq 3(\#C(\FF_{q^i}) - \#C'(\FF_{q^i}))$;
by \eqref{eq:separate curve count}, this yields
\begin{equation} \label{eq:triple cover consequence}
\#C(\FF_{q^i}) + 2T_{A,q^i} + T_{A,q^{3i}} \leq \#C(\FF_{q^{3i}}).
\end{equation}

\item
When $d=4$, every $\FF_{q^i}$-rational point of $C$ lifts to at least one $\FF_{q^i}$-rational point, two $\FF_{q^{2i}}$-rational ramification points, or four
$\FF_{q^{4i}}$-rational points of $C'$. Hence $\#C'(\FF_{q^{4i}})  \geq 4(\#C(\FF_{q^i}) - \#C'(\FF_{q^i})) - 2t$;
by \eqref{eq:separate curve count}, this yields
\begin{equation} \label{eq:quadruple cover consequence}
4T_{A,q^i} + T_{A,q^{4i}} -2\delta \leq 4T_{A,q^i} + T_{A,q^{4i}} -2t \leq \#C(\FF_{q^{4i}}).
\end{equation}

\end{itemize}

\begin{remark} \label{R:relative quadratic twist}
For $d=2$, the compositum $F' \cdot \FF_{q^2}$ contains another purely geometric quadratic extension $F''/F$. We call the corresponding cover $C'' \to C$ the \emph{relative quadratic twist}
of $C' \to C$; it also obeys the conditions listed in \S\ref{sec:exhaustion}.
\end{remark}

\section{Purely geometric extensions: \texorpdfstring{$q > 2$}{q>2}}
\label{sec:geometric q34}

We settle Theorem~\ref{T:purely geometric bounds2}(a) as follows. 
For $q > 2$, Lemma~\ref{lem:q34 genus bound} implies $g \leq 6$.
If $d=2$, then by Lemma~\ref{L:t2 plus t4} plus \eqref{eq:double cover consequence},
\begin{equation} \label{eq:double cover2}
\#C(\FF_{q^2}) \geq 2T_{A,q} + T_{A,q^2} - c(g'-2g+1) = q^2(g'-g) - c(g'-2g+1) \geq q^2(g-1).
\end{equation}
Combining \eqref{eq:double cover2} with Table~\ref{table:manypoints bounds}, we deduce that
\begin{gather*}
(q,g,g') \in \{(3,2,3), (3,2,4), (3,3,5), (3,3,6), (3,4,7), (4,2,3), (4,2,4), (4,3,5)\}.
\end{gather*}
If $d > 2$, then by upgrading \eqref{eq:main bound} using Table~\ref{table:manypoints bounds},
we deduce that $(d,g,g') = (3,2,4)$. We also have the following.
\begin{lemma} \label{L:degree 3 cyclic}
If $q>2$, $g=2$, and $d=3$, then $C' \to C$ is cyclic.
\end{lemma}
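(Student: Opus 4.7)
The plan is to argue by contradiction. Suppose $C'\to C$ is not Galois; then its Galois closure $\tilde{C}\to C$ is defined over $\FF_q$ and has Galois group $S_3$. Since $g_{F'} - g_F = 2 = (d-1)(g_F - 1)$, the Riemann--Hurwitz relation \eqref{eq:riemann-hurwitz1} forces $\delta = 0$, so $C'\to C$ is \'etale.

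The first technical step is to upgrade this to the assertion that $\tilde{C}\to C$ is itself \'etale. Let $H_2 \cong \ZZ/2\ZZ$ denote the index-$3$ subgroup of $S_3$ fixing $C'$. For any geometric point $\tilde{P}$ of $\tilde{C}$ with inertia $I \subseteq S_3$, the ramification of $C'\to C$ above the image of $\tilde{P}$ is governed by the double cosets $H_2\backslash S_3/I$; a case check over the possible $I \in \{\{e\}, \langle (12)\rangle, A_3, S_3\}$ (up to conjugation) shows that every nontrivial $I$ forces at least one ramified point in the fiber of $C'\to C$ (index $2$ in the $\langle(12)\rangle$ case, index $3$ in the $A_3$ case, index $3$ in the $S_3$ case). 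Hence $I = \{e\}$ everywhere, $\tilde{C}\to C$ is \'etale, $g_{\tilde{C}} = 7$ by Riemann--Hurwitz, and the quadratic subcover $D := \tilde{C}/A_3 \to C$ is \'etale with $g_D = 3$, while $\tilde{C}\to D$ is \'etale cyclic of degree $3$.

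Next I would invoke the $S_3$-isotypic decomposition of $J(\tilde{C})$. Labelling the irreducible $\QQ$-representations of $S_3$ as $\mathbf{1}, \mathrm{sgn}, W$ (with $\dim W = 2$), the Artin $L$-function factorization $P_{\tilde{C}/H}(T) = \prod_\rho L(C,\rho,T)^{\dim \rho^H}$ identifies $L(C,\mathrm{sgn},T)$ with the Weil polynomial of $P := \mathrm{Prym}(D/C)$ (dimension $1$) and $L(C,W,T)$ with the Weil polynomial of $A := \mathrm{Prym}(C'/C)$ (dimension $2$). This yields the isogeny $J(\tilde{C}) \sim J(C)\times P\times A^2$, and in particular $\mathrm{Prym}(\tilde{C}/D) \sim A^2$. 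Since $\#A(\FF_q) = 1$ by hypothesis, $h_{\tilde{F}/F_D} = \#A(\FF_q)^2 = 1$.

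This is the contradiction: $\tilde{F}/F_D$ is a purely geometric cubic extension over $\FF_q$ with $q > 2$, relative class number $1$, and $g_{F_D} = 3$, whereas the paragraph immediately preceding this lemma has just shown that \emph{any} purely geometric extension with $q > 2$, $h = 1$, and $d > 2$ must satisfy $(d, g_F, g_{F'}) = (3, 2, 4)$, in particular $g_F = 2$. The main delicate point I would need to justify carefully is the isotypic decomposition when the characteristic divides $|S_3|$; I would do this at the level of $\QQ_\ell$-\'etale cohomology for $\ell\neq q$, where $\QQ_\ell[S_3]$ is semisimple and the $L$-function factorization holds formally, passing back to an isogeny statement on Jacobians by Tate/Faltings.
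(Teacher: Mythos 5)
There is a genuine gap: your case analysis is incomplete. For a non-Galois cubic $F'/F$ the Galois closure $F''$ always has $\Gal(F''/F)\cong S_3$ as an abstract group, but its \emph{constant field} need not be $\FF_q$. Besides the geometrically non-Galois case you treat, there is the case where $C'\to C$ is geometrically cyclic but only becomes Galois after the constant extension $\FF_{q^2}/\FF_q$: then $F''=F'\cdot\FF_{q^2}$, the fixed field of $A_3$ is the constant extension $F\cdot\FF_{q^2}$ (so your curve $D$ is just $C_{\FF_{q^2}}$, of genus $2$, not a geometrically nontrivial \'etale double cover of genus $3$), and $\tilde{C}$ is $C'_{\FF_{q^2}}$ rather than a geometrically connected degree-$6$ cover of genus $7$. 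Your concluding contradiction evaporates here: $\tilde{F}/(F\cdot\FF_{q^2})$ is a purely geometric cyclic cubic extension \emph{over $\FF_{q^2}$} of a genus-$2$ curve, and its relative class number is $\#A(\FF_{q^2})=(2q+1)^2\neq 1$, so neither the hypothesis $h=1$ nor the bound $g_F=2$ is violated. The paper must handle this case by a separate argument (passing to the quadratic twist $\tilde{C}$ of $C$, which acquires a cyclic \'etale cubic cover with Prym of Weil polynomial $(T^2+qT+q)^2$, and deriving the contradiction $2q+2=\#C(\FF_q)+\#\tilde{C}(\FF_q)\geq 3q$ from point counts).

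Your treatment of the geometrically non-Galois case is essentially sound and rests on the same isogeny decomposition $J(C'')\sim J(C)\times P\times A^2$ that the paper uses (the \'etaleness of the closure, the genus counts $g_{\tilde{C}}=7$, $g_D=3$, and the isotypic decomposition all check out). You then reach the contradiction by applying the prior classification of purely geometric cubic extensions to $\tilde{F}/F_D$ with $g_{F_D}=3$, whereas the paper argues more directly that $0\leq\#C''(\FF_q)=\#C(\FF_q)-2T_{A,q}-T_{E,q}$ forces $\#C(\FF_q)\geq 3q$, contradicting the point-count bounds for genus $2$; both routes are valid and of comparable length. To complete the proof you must add the missing constant-field case.
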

\begin{proof}
Suppose first that $C' \to C$ is a non-Galois cover which becomes Galois after a quadratic constant field extension.
By Lemma~\ref{L:t2 plus t4}, the quadratic twist $\tilde{C}$ of $C$ admits a cyclic cubic \'etale cover $\tilde{C}'$ whose Prym has Weil polynomial $(T^2 + qT + q)^2$. 
Since each $\FF_q$-point of $\tilde{C}$ lifts to at most three $\FF_q$-points of $\tilde{C}'$, we have $\#\tilde{C}(\FF_q) + 2q = \tilde{C}'(\FF_q) \leq 3 \#\tilde{C}(\FF_q)$
and so $\tilde{C}(\FF_q) \geq q$. However, $\#C(\FF_q) \geq 2q$ by \eqref{eq:separate curve count}, yielding the impossibility
\[
2q+2 = \#C(\FF_q) + \#\tilde{C}(\FF_q) \geq 3q.
\]

Suppose next that $C' \to C$ is geometrically non-Galois.
In this case, the Galois closure $F''$ of $F'/F$ is itself the function field of a curve $C''$
with $q_{F''} = q_F$. The abelian variety
$J(C'')$ is isogenous to $J(C) \times A^2 \times E$ for some elliptic curve $E$, so
\[
\#C''(\FF_q) = \#C(\FF_q) - 2T_{A,q} - T_{E,q} = \#C(\FF_q) - 4q - T_{E,q} \leq \#C(\FF_q) - 3q;
\]
this yields $\#C(\FF_q) \geq 3q$, which is inconsistent with  Table~\ref{table:manypoints bounds}.
\end{proof}

We now know that in all cases $C' \to C$ is cyclic, so we may proceed as follows.
\begin{itemize}
\item
We again exhaust over Weil polynomials for $C$ and $A$, but this time accounting for
\eqref{eq:deuring-shafarevich}, \eqref{eq:cyclic}, \eqref{eq:double cover consequence},
\eqref{eq:triple cover consequence}, \eqref{eq:double cover2}. 
At this point the cases $(q,d,g,g') = (3,2,3,6), (4,2,2,4)$ drop out.
\item
For each candidate Weil polynomial for $C$, we consult LMFDB to find all candidates for $C$.
At this point the case $(q,d,g,g') = (3,2,4,7)$ drops out: the only isogeny class for $J(C)$ is \avlink{4.3.f\_v\_ca\_eg}, which contains no Jacobian.
\item
We then use \Magma{} to compute all cyclic extensions of $F$ with the desired degree and ramification behavior
and check the resulting Weil polynomial for $A$. At this point the case $(q,d,g,g') = (4,2,3,5)$ drops out.
\end{itemize}
This yields Theorem~\ref{T:purely geometric bounds2}(a).
 
\section{A refined resultant criterion}

In preparation for the case $q=2$, we next introduce a refinement of the resultant criteria, modeled on \cite[Proposition~2.8]{howe-lauter2} (applicable over any finite base field).
\begin{lemma} \label{lem:Delta sequence}
Let $f: C' \to C$ be a finite flat morphism of degree $d$ between smooth projective curves over an arbitrary field $k$.
Let $f^*: J(C) \to J(C')$ denote the pullback map and
let $f_*: J(C') \to J(C)$ denote the pushforward map.
Let $A$ be the Prym variety of $f$, defined as the reduced closed subscheme of the identity component of $\ker(f_*)$. Then there is an exact sequence
\begin{equation} \label{eq:Delta sequence}
0 \to \Delta \to J(C) \times_k A \to J(C') \to 0
\end{equation}
where the map $J(C) \to J(C')$ is $f^*$ and $\Delta$ is a finite flat group scheme killed by $d$.
\end{lemma}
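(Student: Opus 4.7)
My plan is to define $\phi \colon J(C) \times_k A \to J(C')$ by $(x,a) \mapsto f^*(x) + \iota(a)$, where $\iota \colon A \hookrightarrow J(C')$ is the inclusion, set $\Delta := \ker(\phi)$, and prove directly that $\phi$ is an isogeny whose kernel is killed by $d$. The key input throughout is the projection formula $f_* \circ f^* = [d]_{J(C)}$, which holds for any finite flat morphism of smooth projective curves of degree $d$ (one checks it on divisors via $f_* f^* D = d \cdot D$ and then passes to divisor classes). In particular $f_* \colon J(C') \to J(C)$ is surjective, so $\ker(f_*)$ has dimension $g'-g$; thus $\dim A = g'-g$ and $\dim(J(C) \times_k A) = g' = \dim J(C')$.

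To control $\Delta$, I test on functorial points: if $(x,a) \in \Delta(R)$ for some $k$-algebra $R$, then $f^*(x) = -\iota(a)$, and applying $f_*$ while noting that $f_* \circ \iota = 0$ (immediate from $A \subset \ker(f_*)$, which is how $A$ is defined) yields $[d](x) = f_*(f^*(x)) = 0$. Hence $\iota(a) = -f^*(x)$ is $d$-torsion in $J(C')$, and since $\iota$ is a closed immersion, $a$ itself is $d$-torsion in $A$. Therefore $\Delta \subset (J(C) \times_k A)[d]$; in particular $\Delta$ is finite and killed by $d$, and since finite group schemes over a field are automatically flat, $\Delta$ is finite flat.

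Finally, $\phi$ is a homomorphism of abelian varieties of equal dimension with finite kernel, hence an isogeny, hence surjective, which produces the claimed exact sequence. No real obstacle arises; the one subtlety worth flagging is that in characteristic $p \mid d$ the group scheme $\Delta$ may have infinitesimal components, but since the lemma asks only for finite flatness and not \'etaleness, this causes no difficulty.
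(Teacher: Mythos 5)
Your proof is correct, and the heart of it (the computation showing $\Delta$ is killed by $d$ via $f_*\circ f^* = [d]$ and the injectivity of $A \hookrightarrow J(C')$) is exactly the paper's argument. Where you diverge is the surjectivity of $J(C)\times_k A \to J(C')$: you first establish finiteness of the kernel, then conclude by a dimension count ($\dim A = \dim\ker(f_*) = g'-g$, so source and target are abelian varieties of the same dimension and a homomorphism with finite kernel is an isogeny). The paper instead argues purely by chasing surjections: from $f_*f^*=[d]$ it gets $J(C)\to J(C')/\ker(f_*)$ surjective, hence $J(C)\times_k\ker(f_*)\to J(C')$ surjective, hence $\ker(f_*)\to\coker(f^*)$ surjective, and then uses that $\coker(f^*)$ is connected and reduced to replace $\ker(f_*)$ by its reduced identity component $A$. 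Your route is shorter and quite standard; the paper's route avoids any appeal to dimensions and deals explicitly with the only delicate point in passing from $\ker(f_*)$ (possibly non-reduced, disconnected) to $A$, which in your version is absorbed into the assertions that $\dim A = \dim\ker(f_*)$ and that $A$ is an abelian subvariety of $J(C')$ --- true, but worth stating, since that is precisely where the definition of $A$ as the reduced identity component enters. Both arguments rest on the same key input $f_*\circ f^* = [d]$, which you correctly justify on divisors.
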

\begin{proof}
The composition $J(C) \stackrel{f^*}{\to} J(C') \stackrel{f_*}{\to} J(C)$ equals the isogeny $[d]$; consequently, $f_*$ is surjective (as a morphism of group schemes) and $J(C) \stackrel{f^*}{\to} J(C') \to J(C')/\ker(f_*)$ is surjective.
The latter implies that $J(C) \times_k \ker(f_*) \to J(C')$ is surjective,
as then is $\ker(f_*) \to \coker(f^*)$; since the target is connected and reduced, $A \to \coker(f^*)$ is surjective,
as then is $J(C) \times_k A \to J(C')$.

Let $S$ be an arbitrary $k$-scheme and suppose $x \in (J(C) \times_k A)(S)$ maps to zero to $J(C')$.
Write $x = (x_1, x_2)$ with $x_1 \in J(C)(S)$ and $x_2 \in A(S)$. By definition, $x_1$ and $-x_2$ have the same image in
$J(C')(S)$; that is, $f^*(x_1) = -x_2$. Applying $f_*$, we deduce that $f_* f^* (x_1) = 0$, and so $[d](x_1) = 0$;
it follows that $[d](x) = ([d](x_1), [d](x_2)) = (0, [d](x_2))$ maps to  zero in $J(C')$, and hence $[d](x_2) = 0$.
\end{proof}

\begin{cor} \label{cor:Delta sequence}
In Lemma~\ref{lem:Delta sequence},
let $h_1$ and $h_2$ be the radicals of the real Weil polynomials associated to $J(C)$ and $A$. 
Let $\widetilde{\res}(h_1,h_2)$ be the modified reduced resultant of $h_1$ and $h_2$ in the sense of \cite[Proposition~2.8]{howe-lauter2}. Then 
\begin{equation} \label{eq:cor Delta sequence}
\gcd(d, \widetilde{\res}(h_1,h_2)) > 1.
\end{equation}
\end{cor}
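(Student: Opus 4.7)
The plan is to combine the resultant machinery of Howe--Lauter \cite[Proposition~2.8]{howe-lauter2}, applied to the exact sequence \eqref{eq:Delta sequence}, with the $d$-torsion constraint from Lemma~\ref{lem:Delta sequence}. If $h_1$ and $h_2$ share a common irreducible factor in $\QQ[T]$, then $\widetilde{\res}(h_1,h_2)=0$ and $\gcd(d,0)=d>1$ is immediate (using $d\geq 2$, as the lemma is trivial otherwise); I may therefore assume $h_1$ and $h_2$ are coprime in $\QQ[T]$.

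The main step is to adapt Howe--Lauter's argument to show that the exponent of $\Delta$ divides $\widetilde{\res}(h_1,h_2)$. For any $(x_1,x_2)\in\Delta$, the operators $h_1(F+V)$ and $h_2(F+V)$ (where $F,V$ denote Frobenius and Verschiebung) annihilate $x_1\in J(C)$ and $x_2\in A$, respectively. The identity $f^*(x_1)=-x_2$ in $J(C')$ yields $f^*(h_2(F+V)x_1)=0$, so $h_2(F+V)x_1\in\ker(f^*)\subseteq J(C)[d]$; combined with $h_1(F+V)x_1=0$, a Bezout identity over $\ZZ[T]$ shows that the classical reduced resultant $\res(h_1,h_2)$ annihilates $\Delta$. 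The refinement to the modified reduced resultant $\widetilde{\res}(h_1,h_2)$ arises from the self-dual structure imposed on $\Delta$ by the principal polarization of $J(C')$, as in the proof of Howe--Lauter's proposition. Combined with Lemma~\ref{lem:Delta sequence}, the exponent of $\Delta$ divides $\gcd(d,\widetilde{\res}(h_1,h_2))$.

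The corollary then follows once $\Delta\neq 0$, which I would deduce from Matsusaka's theorem on the irreducibility of Jacobian theta divisors: if $\Delta=0$, then $J(C)\times A\to J(C')$ is an isomorphism of abelian varieties, and the pullback of the principal polarization of $J(C')$ would be a product polarization on $J(C)\times A$ (with both factors of positive dimension whenever $g\geq 1$ and $g'>g$), contradicting irreducibility. The hard part will be the polarization-based refinement from $\res$ to $\widetilde{\res}$: one must verify that the Rosati involution on $\mathrm{End}(J(C'))\otimes\QQ$ preserves the subvarieties $f^*(J(C))$ and $A$ and induces the correct self-duality on $\Delta$ that distinguishes the modified resultant from the classical one.
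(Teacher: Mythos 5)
Your argument follows the same route as the paper: $\Delta$ is nontrivial because $J(C')$ is indecomposable as a principally polarized abelian variety (Torelli), its exponent divides $d$ by Lemma~\ref{lem:Delta sequence}, and it divides $\widetilde{\res}(h_1,h_2)$ by Howe--Lauter. The only difference is that the paper simply cites \cite[Proposition~2.8]{howe-lauter2} for the latter divisibility rather than re-deriving it (which is legitimate, since the statement already defines $\widetilde{\res}$ by reference to that proposition); if you do re-derive it, note that $h_2(F+V)x_1$ only lands in $\ker(f^*)\subseteq J(C)[d]$ rather than vanishing, so you should run the Bezout argument on $\ker(f^*(J(C))\times A\to J(C'))$, which embeds in both factors, to avoid picking up a spurious factor of $d$.
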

\begin{proof}
In \eqref{eq:Delta sequence}, $\Delta$ cannot be trivial: otherwise, $J(C')$
would be decomposable as a principally polarized abelian variety, violating Torelli \cite[Theorem~12.1]{milne}. The exponent of $\Delta$ divides $d$
by Lemma~\ref{lem:Delta sequence} and $\widetilde{\res}(h_1,h_2)$ by \cite[Proposition~2.8]{howe-lauter2}.
\end{proof}

\section{Purely geometric extensions: \texorpdfstring{$q=2$}{q=2}}
\label{sec:purely geometric q2}

To conclude, we establish parts (b) and (c) of Theorem~\ref{T:purely geometric bounds2}.

\begin{lemma} \label{lem:degree 2 bound1}
If $q=2$ and $d=2$, then $g \leq 9$. Moreover, for $g=2,\dots,9$ we have respectively $g' \leq 7, 9, 10, 11, 13, 14, 15, 17$.
\end{lemma}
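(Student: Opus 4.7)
The plan is to combine the general genus bound of Lemma~\ref{lem:genus bound q2} with the degree-$2$ specific inequalities assembled in Section~\ref{sec:analysis by degree}, and then to cut down the remaining candidates by an exhaustive search over Weil polynomials, in the spirit of Lemma~\ref{lem:genus bound q2 refinement}.

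For the initial cap $g \leq 9$, I would start from Riemann--Hurwitz \eqref{eq:riemann-hurwitz1}, which for $d = 2$ reads $g' = 2g - 1 + \delta$ and $\dim A = g - 1 + \delta$. Combining \eqref{eq:separate curve count} with Lemma~\ref{lem:excess} and Lemma~\ref{lem:oesterle2} produces a linear-in-$g$ upper bound on $\dim A$, hence on $g'$. This alone is not quite sharp enough, so I would add the double cover refinement \eqref{eq:double cover consequence}, namely $2 T_{A,2} + T_{A,4} - \delta \leq \#C(\FF_4)$: bounding $\#C(\FF_4)$ above by Table~\ref{table:manypoints bounds} and bounding $2 T_{A,2} + T_{A,4}$ below by summing the contributions of the simple factors of $A$ via Lemma~\ref{lem:simple order 1 traces} (whose leading term in the dimension is $3 \dim A + O(1)$) yields a contradiction once $g \geq 10$.

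To establish the tight bound on $g'$ for each $g \in \{2, \ldots, 9\}$, I would exhaust over candidate pairs of Weil polynomials for $C$ and $A$ subject to all of the following constraints: Lemmas~\ref{lem:oesterle} and~\ref{lem:oesterle2} and Table~\ref{table:manypoints bounds}; positivity of the numbers of places of each degree on $C$ and $C'$; the double cover inequalities \eqref{eq:double cover consequence}, \eqref{eq:double cover parity}, \eqref{eq:unique ramification point}, \eqref{eq:unique ramification point2}; Deuring--Shafarevich \eqref{eq:deuring-shafarevich}; the divisibility \eqref{eq:cyclic} in the \'etale case $\delta = 0$; the resultant criterion of Corollary~\ref{cor:Delta sequence}; compatibility with the simple isogeny factors of $A$ permitted by Lemma~\ref{lem:simple order 1}; and the additional consistency imposed by the relative quadratic twist (Remark~\ref{R:relative quadratic twist}), whose Weil polynomial must satisfy the same list of constraints. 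Every pair $(g, g')$ exceeding the claimed bound should then be eliminated by one of these checks, and what remains is the stated list.

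The main obstacle is organizational rather than conceptual: the number of candidate pairs of Weil polynomials is substantial, and every degree-$2$ constraint must be encoded correctly and intersected with the LMFDB catalogue of simple abelian varieties of order~$1$ over $\FF_2$. The computation is parallel to the one behind Lemma~\ref{lem:genus bound q2 refinement}, and is carried out in the accompanying Jupyter notebook \cite{repo}.
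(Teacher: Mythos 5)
Your strategy for the cap $g \leq 9$ has a genuine gap. The lower bound you invoke, ``$2T_{A,2}+T_{A,4}$ has leading term $3\dim A + O(1)$,'' is not valid for the whole Prym $A$: by Lemma~\ref{lem:simple order 1 traces} a simple factor attached to $\eta$ of order $n$ contributes $3\phi(n)+t_2$ with $t_2$ possibly equal to $-2$, and this error occurs \emph{per factor}, so it accumulates. Concretely, if $A$ is isogenous to a power of \avlink{2.2.ac\_c} (or of \avlink{3.2.ae\_j\_ap}) then $2T_{A,2}+T_{A,4} = 2\dim A$ exactly, so the best uniform linear bound has slope $2$, not $3$. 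With slope $2$, at $g=10$ and $\delta=0$ your comparison reads $2\cdot 9 \leq \#C(\FF_4)$, which is perfectly compatible with the bound $\#C(\FF_4)\leq 27$ from Table~\ref{table:manypoints bounds}; no contradiction results. Moreover, Table~\ref{table:manypoints bounds} only covers $g\leq 10$, while the only a priori bound available for $d=2$ before this lemma is $g\leq 40$ (Corollary~\ref{cor:genus bound q2}); for $10<g\leq 40$ you would have to fall back on Lemma~\ref{lem:oesterle} for $\#C(\FF_4)$, which is weaker still. Since your second step (the exhaustion over Weil polynomials) presupposes the cap to have a finite, feasible search range, the whole proposal is blocked at this point; note also that producing the per-$g$ bounds on $g'$ by search is exactly the heavy computation that this lemma is designed to avoid -- it is what makes the later searches in Lemma~\ref{lem:degree 2 bound} (and Theorem~\ref{T:purely geometric bounds2}(b)) feasible, and those searches quote the present bounds as cutoffs.

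The missing idea is a weight transfer inside the excess inequality rather than using the excess bound and \eqref{eq:double cover consequence} as two separate estimates. The paper applies Lemma~\ref{lem:excess} to $A$, writes $1.3366T_{A,2}+0.3366T_{A,4} = T_{A,2}+0.3366(T_{A,2}+T_{A,4})$, and bounds the grouped term via \eqref{eq:double cover consequence} (with $i=1$ and $t\leq\delta$) and \eqref{eq:separate curve count}:
\[
0.3366\,(T_{A,2}+T_{A,4}-\delta) \leq 0.3366\,(\#C(\FF_4)-\#C(\FF_2)) = 0.3366\,(2a_2),
\]
while the remaining terms are bounded by \eqref{eq:separate curve count} alone; Lemma~\ref{lem:oesterle2} then yields
\[
1.5612(g'-g) - 0.3366(g'-2g+1) \leq a_1 + 0.3366(2a_2)+0.1137(3a_3)+0.0537(4a_4) \leq 0.8042g+5.619.
\]
Since $g'-2g+1=\delta\geq 0$ for $d=2$, this single closed-form inequality gives both $1.5612(g-1)\leq 0.8042g+5.619$, hence $g\leq 9$, and (solving for $g'$ for each fixed $g$) the stated bounds $g'\leq 7,9,10,11,13,14,15,17$ -- no Weil-polynomial search is needed at this stage. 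If you want to repair your argument, replace your slope-$3$ trace estimate with this rerouting of the $0.3366$ weight through the double-cover inequality.
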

\begin{proof}
Combining  \eqref{eq:riemann-hurwitz1}, Lemma~\ref{lem:excess},and \eqref{eq:double cover consequence} yields
\begin{align*}
1.5612(g-1) &\leq 1.5612(g'-g) - 0.3366 \delta \\
&\leq T_{A,q} + 0.3366 (T_{A,q} + T_{A,q^2} - \delta) \\
&\qquad + 0.1137 (T_{A,q^3} - T_{A,q}) + 0.0537 (T_{A,q^4} - T_{A,q^2}) \\
&\leq (1 - 0.3366 - 0.1137 + 0.0537) T_{A,q}  \\
&\qquad + (0.3366-0.0537) (T_{A,q} + T_{A,q^2} - t) + 0.1137 T_{A,q^3} + 0.0537 T_{A,q^4} \\
&\leq (1 - 0.3366 - 0.1137 + 0.0537) \#C(\FF_q)  \\
&\qquad + (0.3366-0.0537) (\#C(\FF_{q^2}) - \#C(\FF_q))\\
&\qquad + 0.1137 \#C(\FF_{q^3}) + 0.0537 \#C(\FF_{q^4}) \\
&= a_1 + 0.3366 (2a_2) + 0.1137 (3a_3) + 0.0537 (4a_4) \leq 0.8042g + 5.619.
\end{align*}
This yields the claimed results.
\end{proof}

\begin{lemma} \label{lem:degree 2 bound}
Suppose that $q=2$ and $g>1$.
\begin{enumerate}
\item[(a)]
If $d=2$, then
\[
(g,g') \in \{(2,3), (2,4),(2,5), (3,5),(3,6), (4,7), (4,8), (5,9), (6, 11), (7,13)\}.
\]
\item[(b)]
If $d=3$, then $(g,g') \in \{(2,4), (2,6), (3,7), (4, 10)\}$.
\item[(c)]
If $d=4$, then $(g,g') \in \{(2,5), (2,6), (3,9)\}$.
\item[(d)]
If $d > 4$, then $g=2$ and $(d, g') \in \{(5, 6), (6, 7), (7, 8)\}$.
\end{enumerate}
\end{lemma}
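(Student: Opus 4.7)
The plan is to extend the Weil-polynomial exhaustion of Section~\ref{sec:exhaustion} by layering on the degree-dependent constraints of Section~\ref{sec:analysis by degree} and the refined resultant bound from Corollary~\ref{cor:Delta sequence}, beginning from the crude bounds already established. For part (a), Lemma~\ref{lem:degree 2 bound1} gives $g \leq 9$ together with a specific upper bound on $g'$ for each $g$; combined with the Riemann--Hurwitz identity $g' = 2g-1+\delta$ from \eqref{eq:riemann-hurwitz1}, this leaves only finitely many pairs. For parts (b), (c), (d), Corollary~\ref{cor:genus bound q2} and Lemma~\ref{lem:genus bound q2 refinement} give respectively $g \leq 6$, $g \leq 3$, and $g = 2$. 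Riemann--Hurwitz pins $g'$ to an arithmetic progression in each case ($g' = 3g-2+\delta$ for $d=3$, with $\delta$ equal to the tame ramification count since $\gcd(3,2)=1$; $g' = 4g-3+\delta$ for $d=4$; $g' = d+1+\delta$ for $d \geq 5$). For (d), substituting $g=2$ and $\#C(\FF_2) \leq 6$ from Table~\ref{table:manypoints bounds} into Lemma~\ref{lem:genus bound q2} forces $g' \leq 9$, hence $d \leq 8$.

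For each remaining triple $(d, g, g')$ I will run the exhaustion of Section~\ref{sec:exhaustion}: enumerate Weil polynomials $P_C$ of genus $g$ (subject to positivity and the point-count bounds of Table~\ref{table:manypoints bounds}) and Weil polynomials $P_A$ of dimension $g'-g$ formed as products of simple order-$1$ polynomials from Lemma~\ref{lem:simple order 1}, then test each pair against the applicable degree-specific constraints. For $d=2$ these are \eqref{eq:double cover without trace}, \eqref{eq:double cover consequence}, \eqref{eq:double cover parity}, and \eqref{eq:cyclic} (using that any degree-$2$ extension is automatically cyclic); for $d=3$, \eqref{eq:triple cover consequence}; for $d=4$, \eqref{eq:quadruple cover consequence}; in every case the ramification lower bounds \eqref{eq:unique ramification point} and \eqref{eq:unique ramification point2} when $\delta \leq 2$, and the refined resultant condition $\gcd(d, \widetilde{\res}(h_1, h_2)) > 1$ from Corollary~\ref{cor:Delta sequence} applied to the radicals of the real Weil polynomials of $J(C)$ and $A$. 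The product $P_C \cdot P_A$ must also satisfy positivity and the Table~\ref{table:manypoints bounds} bounds in genus $g'$. A triple is eliminated when no pair $(P_C, P_A)$ survives; the triples listed in the lemma are each verified to admit at least one surviving pair, drawn from the census underlying Theorem~\ref{T:purely geometric bounds2}(b).

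The main obstacle will be the combinatorial explosion in factorizations of $P_A$ as the Prym dimension $g'-g$ grows, most pronounced near the top of the admissible range in (a) (e.g.\ $(g, g') = (9, 17)$) and in (b) (e.g.\ $(g, g') = (6, 16)$). This is contained by enumerating simple order-$1$ factors through the Madan--Pal--Robinson parametrization in Lemma~\ref{lem:simple order 1} (indexed by the order $n$ of $\eta$, with trace data supplied by Lemma~\ref{lem:simple order 1 traces}), pruning via the trace inequalities of Lemma~\ref{L:t2 plus t4} and the excess bound of Lemma~\ref{lem:excess}, and applying the resultant and ramification filters before the more expensive combined point-count tests on $P_{C'}$. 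The whole argument is packaged as a \SageMath{} script documented in \cite{repo}.
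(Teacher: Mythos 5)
Your proposal matches the paper's proof in essentially all respects: both reduce to a finite list of triples $(d,g,g')$ via Lemma~\ref{lem:degree 2 bound1}, Corollary~\ref{cor:genus bound q2}, Lemma~\ref{lem:genus bound q2 refinement}, and \eqref{eq:riemann-hurwitz1}, and then eliminate triples by the Weil-polynomial exhaustion of \S\ref{sec:exhaustion} augmented with \eqref{eq:unique ramification point}, \eqref{eq:unique ramification point2}, \eqref{eq:cyclic}, \eqref{eq:double cover without trace}, \eqref{eq:triple cover consequence}, \eqref{eq:quadruple cover consequence}, and \eqref{eq:cor Delta sequence}. The only (harmless) deviations are that for part (d) you invoke Lemma~\ref{lem:genus bound q2} to get $g' \leq 9$, $d \leq 8$ where the paper uses the sharper $g' \leq 8$ from Lemma~\ref{lem:genus bound q2 refinement} (the few extra triples are killed by the same exhaustion), and you additionally include the parity condition \eqref{eq:double cover parity}, which the paper saves for the computation behind Theorem~\ref{T:purely geometric bounds2}(b).
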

\begin{proof}
We run an exhaustive search over Weil polynomials as in \S\ref{sec:exhaustion}, but also accounting for 
\eqref{eq:unique ramification point},
\eqref{eq:unique ramification point2},
\eqref{eq:cyclic} (for $d=2$),
\eqref{eq:double cover without trace} (taking $i=1,2,3$),
\eqref{eq:triple cover consequence} (taking $i=1,2$),
\eqref{eq:quadruple cover consequence} (taking $i=1$),
and \eqref{eq:cor Delta sequence}.
This rules out 
\begin{gather*}
(d,g,g') \in \{ (2,2,6), (2,2,7), (2,3,7), (2,3,8), (2,3,9), (2,4,9), (2,4,10), (2,4,11), \\
(2,5,10), (2,5,11), (2,6,12), (2,6,13), (2,7,14), (2,8,15), (2,9,17), \\
(3,2,5), (3,2,7), (3,2,8), (3,3,8), (3,3,9), (3,3,10), (3,4,11), (3,4,12), \\
(3,5,13), (3,5,14), (3,6,16), 
(4,2,7), (4,2,8), (4,3,10), (5,2,7), (5,2,8), (6,2,8)\}.
\end{gather*}
(The runtime is dominated by the cases $(d,g,g') = (2,8,15), (2,9,17)$.)
We may thus deduce (a) from \eqref{eq:riemann-hurwitz1} and Lemma~\ref{lem:degree 2 bound1}, (b) from Lemma~\ref{cor:genus bound q2} and Lemma~\ref{lem:genus bound q2 refinement}, and (c) and (d) from Lemma~\ref{lem:genus bound q2 refinement}.
\end{proof}

We obtain Theorem~\ref{T:purely geometric bounds2}(b) by a similar calculation which also accounts for \eqref{eq:double cover parity} (taking $j = 2$),
Remark~\ref{R:relative quadratic twist}, and the following Remark~\ref{R:resultant degree 2}.

\begin{remark} \label{R:resultant degree 2}
If $C' \to C$ is \'etale and geometrically cyclic (i.e., cyclic after base extension from $\FF_2$ to an algebraic closure), we can upgrade Lemma~\ref{lem:Delta sequence} to say that $\Delta$ has exponent exactly $d$ (because $\ker(f^*)$ is \'etale and cyclic of order $d$; compare \eqref{eq:cyclic}), and Corollary~\ref{cor:Delta sequence} to say that $\widetilde{\res}(h_1,h_2)$ must be divisible by $d$.

If we drop these conditions on $C' \to C$, we can still say something when
$\gcd(d, \widetilde{\res}(h_1,h_2)) = 2$: as in \cite[Theorem~2.2]{howe-lauter2} there must be a degree-2 map from $C'$ to another curve $D$ whose Jacobian is isogenous to $J(C)$ or $A$. By \eqref{eq:riemann-hurwitz1}, the second option cannot occur if $g' > 2g+1$;
in characteristic 2, \eqref{eq:deuring-shafarevich} also applies.

In the context of Theorem~\ref{T:purely geometric bounds2}(b), the condition that  $\gcd(d, \widetilde{\res}(h_1,h_2)) = 2$ rules out some cases
with $(d,g,g') \in \{(4,2,6), (4,3,9), (6,2,7)\}$: there would have to be a double cover $C' \to D$ with $J(D)$ isogenous to $J(C)$, but this is forbidden by Lemma~\ref{lem:degree 2 bound}(a).
Similarly, if $(d,g,g') = (4,2,5)$, then $J(D)$ cannot be isogenous to $A$: otherwise $D$ would admit an \'etale double cover while $\#J(D)(\FF_2) = 1$. Hence $J(C), J(C')$ must occur in Theorem~\ref{T:purely geometric bounds2}(c) with $(d,g,g') = (2,2,5)$.
\end{remark}

\begin{remark} \label{R:principal polarization}
When $d=2$ and $\delta \leq 1$, $A$ admits a principal polarization;
over $\CC$ this is classical \cite[Theorem~12.3.3]{birkenhake-lange}, and a characteristic-free argument will appear in \cite{achter-casalaina-martin}.
Our formulation of Theorem~\ref{T:purely geometric bounds2}(b) does not account for this constraint;
it would rule out a further 16 pairs, which are marked with stars in
Table~\ref{table:geometric bounds}.
\end{remark}

To obtain Theorem~\ref{T:purely geometric bounds2}(c), we use table lookups to find candidates for $C$ with a given Weil polynomial (see \S\ref{sec:exhaustion}), then use \Magma{} to enumerate cyclic extensions. As a consistency check, for each triple $(d,g,g')$ listed in Lemma~\ref{lem:degree 2 bound} with $g \leq 5$,
we enumerated cyclic extensions for \emph{all} curves $C$ of genus $g$; this took about 14 hours and yielded no new results.

\clearpage
\appendix

\section{Extensions of relative class number 1}
\label{sec:tables}

\begin{table}[ht]
\tiny
\begin{minipage}[t]{5.5cm}
\begin{tabular}{c|c|c|c|c|c}
$q_F$ & $g_F$ & $g_{F'}$ & $J(C)$ & $J(C')$ & $\#C'$\\
\hline
$2$ & $0$ & $1$ & 0 & \avlink{1.2.ac} & 1 \\
\hline
$2$ & $0$ & $2$ & 0 & \avlink{2.2.ad\_f} & 1 \\
$2$ & $0$ & $2$ & 0 & \avlink{2.2.ac\_c} & 1 \\
\hline
$2$ & $0$ & $3$ & 0 & \avlink{3.2.ad\_c\_b} & 1\\
$2$ & $0$ & $3$ & 0 & \avlink{3.2.ad\_d\_ac} & 1 \\
\hline
$2$ & $0$ & $4$ & 0 & \avlink{4.2.ad\_c\_a\_b} & 1 \\
\hline
$2$ & $1$ & $2$ & \avlink{1.2.a} & \avlink{2.2.ac\_e} & 1\\
$2$ & $1$ & $2$ & \avlink{1.2.b} & \avlink{2.2.ab\_c} &1\\
$2$ & $1$ & $2$ & \avlink{1.2.c} & \avlink{2.2.a\_a} &1 \\
\hline
$2$ & $1$ & $3$ & \avlink{1.2.ac} & \avlink{3.2.ad\_d\_ac} & 1\\
$2$ & $1$ & $3$ & \avlink{1.2.ab} & \avlink{3.2.ad\_g\_ak} & 1\\
$2$ & $1$ & $3$ & \avlink{1.2.ab} & \avlink{3.2.ac\_c\_ad} & 1\\
$2$ & $1$ & $3$ & \avlink{1.2.b} & \avlink{3.2.ad\_g\_ai} & 1\\
$2$ & $1$ & $3$ & \avlink{1.2.b} & \avlink{3.2.ac\_e\_ah} & 1\\
\hline
$2$ & $1$ & $4$ & \avlink{1.2.a} & \avlink{4.2.ad\_e\_af\_i} & 1\\
$2$ & $1$ & $4$ & \avlink{1.2.a} & \avlink{4.2.ad\_f\_ai\_m} & 2\\
\end{tabular}
\end{minipage}
\begin{minipage}[t]{5.5cm}
\begin{tabular}{c|c|c|c|c|c}
$q_F$ & $g_F$ & $g_{F'}$ & $J(C)$ & $J(C')$ & $\#C'$ \\
\hline
$2$ & $1$ & $4$ & \avlink{1.2.c} & \avlink{4.2.ad\_f\_ag\_i} &1\\
$2$ & $1$ & $4$ & \avlink{1.2.c} & \avlink{4.2.ac\_c\_ae\_i} &2\\
\hline
$2$ & $1$ & $5$ & \avlink{1.2.b} & \avlink{5.2.ad\_c\_d\_ag\_h} & 3\\
$2$ & $1$ & $5$ & \avlink{1.2.b} & \avlink{5.2.ad\_c\_e\_ai\_i} & 3\\
$2$ & $1$ & $5$ & \avlink{1.2.b} & \avlink{5.2.ad\_e\_ag\_k\_ao} & 3\\
\hline
$2$ & $1$ & $6$ & \avlink{1.2.c} & \avlink{6.2.ad\_c\_a\_f\_am\_q} & 1$^*$ \\
\hline
\hline
$3$ & $0$ & $1$ & 0 & \avlink{1.3.ab} & 1\\
\hline
$3$ & $1$ & $2$ & \avlink{1.3.ab} & \avlink{2.3.ae\_j} & 1\\
$3$ & $1$ & $2$ & \avlink{1.3.a} & \avlink{2.3.ad\_g} & 1\\
$3$ & $1$ & $2$ & \avlink{1.3.b} & \avlink{2.3.ac\_d} & 2\\
$3$ & $1$ & $2$ & \avlink{1.3.c} & \avlink{2.3.ab\_a} & 1\\
\hline
$3$ & $1$ & $3$ & \avlink{1.3.c} & \avlink{3.3.ae\_g\_ag} & 1\\
$3$ & $1$ & $3$ & \avlink{1.3.d} & \avlink{3.3.ad\_a\_j} & 2\\
\hline
\hline
$4$ & $0$ & $1$ & 0 & \avlink{1.4.ae} & 1\\
\hline
$4$ & $1$ & $2$ & \avlink{1.4.a} & \avlink{2.4.ae\_i} & 1\\
\end{tabular}
\end{minipage}
\medskip
\caption{Purely geometric extensions with $g_F \leq 1$, $g_{F'} > g_F$. The column $\#C'$ counts Jacobians in the isogeny class. The star indicates a conjectural value; see  Remark~\ref{R:unique curve}.}
\label{table:geometric extensions big q1}
\end{table}

\begin{table}[ht]
\tiny
\begin{tabular}{c|c|c|c|c|l}
$q_F$ & $d$ & $g_F$ & $g_{F'}$ & $J(C)$ & $F$ \\
\hline
$3$ & $2$ & $2$ & $3$ & $\avlink{2.3.ab\_c}$ & $y^{2} + x^{5} + 2 x^{2} + x$ \\
$3$ & $2$ & $2$ & $3$ & $\avlink{2.3.ab\_e}$ & $y^{2} + x^{6} + x^{4} + 2 x^{3} + x^{2} + 2 x$ \\
$3$ & $2$ & $2$ & $3$ & $\avlink{2.3.b\_c}$ & $y^{2} + 2 x^{5} + x^{2} + 2 x$ \\
$3$ & $2$ & $2$ & $3$ & $\avlink{2.3.b\_e}$ & $y^{2} + 2 x^{6} + 2 x^{4} + x^{3} + 2 x^{2} + x$ \\
\hline
$3$ & $2$ & $2$ & $4$ & $\avlink{2.3.c\_h}$ & $y^{2} + 2 x^{6} + x^{4} + 2 x^{3} + x^{2} + 2$ \\
\hline
$3$ & $2$ & $3$ & $5$ & $\avlink{3.3.c\_g\_i}$ & $y^{2} + 2 x^{8} + x^{7} + x^{5} + x^{3} + 2 x^{2} + 2 x$ \\
$3$ & $2$ & $3$ & $5$ & $\avlink{3.3.c\_g\_m}$ & $y^{2} + x^{7} + 2 x^{5} + x^{4} + x^{3} + x^{2} + 2$ \\
\hline
$3$ & $3$ & $2$ & $4$ & $\avlink{2.3.c\_d}$ & $y^{2} + 2 x^{6} + 2 x^{4} + x^{3} + x + 2$ \\
$3$ & $3$ & $2$ & $4$ & $\avlink{2.3.c\_g}$ & $y^{2} + 2 x^{6} + 2 x^{5} + x^{4} + x^{3} + x^{2} + 2 x + 2$ \\
\hline
\hline
$4$ & $2$ & $2$ & $3$ & $\avlink{2.4.ab\_e}$ & $y^{2} + x y + x^{5} + x$ \\
$4$ & $2$ & $2$ & $3$ & $\avlink{2.4.b\_e}$ & $y^{2} + x y + x^{5} + a x^{2} + x$ \\
\hline
$4$ & $3$ & $2$ & $4$ & $\avlink{2.4.d\_h}$ & $y^{2} + (x^{3} + x + 1) y + a x^{5} + a x^{4} + a x^{3} + a x$ \\
\end{tabular}
\medskip

\caption{Purely geometric extensions with $q_F > 2$ and $g_F > 1$.}
\label{table:geometric extensions big q2}
\end{table}

\begin{table}[ht]
\tiny
\begin{tabular}{c|c|c|c|l}
$d$ & $g_F$ & $g_{F'}$ & $J(C)$ & $F$ \\
\hline
$3$ & $2$ & $4$ & $\avlink{2.2.ac\_e}$ & $y^{2} + y + x^{5} + x^{4} + 1$ \\
$3$ & $2$ & $4$ & $\avlink{2.2.b\_b}$ & $y^{2} + (x^{3} + x + 1) y + x^{6} + x^{3} + x^{2} + x$ \\
\hline
$3$ & $2$ & $6$ & $\avlink{2.2.a\_c}$ & $y^{2} + y + x^{5} + x^{4} + x^{3}$ \\
$3$ & $2$ & $6$ & $\avlink{2.2.b\_c}$ & $y^{2} + x y + x^{5} + x^{3} + x^{2} + x$ \\
$3$ & $2$ & $6$ & $\avlink{2.2.b\_d}$ & $y^{2} + (x^{3} + x + 1) y + x^{6} + x^{5} + x^{4} + x^{2}$ \\
\hline
$3$ & $3$ & $7$ & $\avlink{3.2.a\_b\_a}$ & $y^{4} + (x^{3} + 1) y + x^{4}$ \\
$3$ & $3$ & $7$ & $\avlink{3.2.a\_b\_d}$ & $y^{3} + x^{2} y^{2} + x^{3} y + x^{4} + x^{3} + x$ \\
$3$ & $3$ & $7$ & $\avlink{3.2.b\_b\_b}$ & $y^{3} + x y^{2} + (x^{3} + 1) y + x^{4}$ \\
$3$ & $3$ & $7$ & $\avlink{3.2.b\_b\_e}$ & $y^{3} + (x^{2} + x) y^{2} + y + x^{3}$ \\
$3$ & $3$ & $7$ & $\avlink{3.2.b\_c\_b}$ & $x y^{3} + x y^{2} + y + x^{3}$ \\
$3$ & $3$ & $7$ & $\avlink{3.2.b\_c\_e}$ & $y^{4} + x y^{2} + y + x^{4}$ \\
$3$ & $3$ & $7$ & $\avlink{3.2.b\_e\_e}$ & $y^{3} + x^{2} y^{2} + x y + x^{4} + x$ \\
\hline
$3$ & $4$ & $10$ & $\avlink{4.2.d\_f\_k\_s}$ & $x^{2} y^{3} + (x^{4} + x^{2} + 1) y + x^{4} + x^{2} + x + 1$ \\
$3$ & $4$ & $10$ & $\avlink{4.2.e\_j\_q\_z}$ & $x y^{3} + (x^{2} + x + 1) y^{2} + (x^{4} + x) y + x^{5} + x^{4}$ \\
\hline
$4$ & $2$ & $5$ & $\avlink{2.2.ab\_c}$ & $y^{2} + x y + x^{5} + x^{3} + x$ \\
\hline
$5$ & $2$ & $6$ & $\avlink{2.2.a\_a}$ & $y^{2} + y + x^{5}$ \\
$5$ & $2$ & $6$ & $\avlink{2.2.b\_c}$ & $y^{2} + x y + x^{5} + x^{3} + x^{2} + x$ \\
$5$ & $2$ & $6$ & $\avlink{2.2.c\_e}$ & $y^{2} + y + x^{5} + x^{4}$ \\
\hline
$7$ & $2$ & $8$ & $\avlink{2.2.c\_d}$ & $y^{2} + (x^{2} + x + 1) y + x^{5} + x^{4} + x^{2} + x$ \\
\end{tabular}\medskip
\caption{Cyclic purely geometric extensions with $q_F = 2$, $g_F > 1$, $d>2$. Conjecture~\ref{conj:remaining problem} asserts that no noncyclic extensions occur.}
\label{table:geometric extensions big q2b1}
\end{table}

\begin{table}[ht]
\tiny
\begin{tabular}{c|c|c|p{8cm}}
$g_F$ & $g_{F'}$ & $J(C)$ & $F$ \\
\hline
$2$ & $3$ & $\avlink{2.2.ab\_c}$ & $y^{2} + x y + x^{5} + x^{3} + x$ \\
$2$ & $3$ & $\avlink{2.2.b\_c}$ & $y^{2} + x y + x^{5} + x^{3} + x^{2} + x$ \\
\hline
$2$ & $4$ & $\avlink{2.2.a\_a}$ & $y^{2} + y + x^{5}$ \\
$2$ & $4$ & $\avlink{2.2.a\_c}$ & $y^{2} + y + x^{5} + x^{4} + x^{3}$ \\
$2$ & $4$ & $\avlink{2.2.b\_b}$ & $y^{2} + (x^{3} + x + 1) y + x^{6} + x^{3} + x^{2} + x$ \\
\hline
$2$ & $5$ & $\avlink{2.2.b\_d}$ & $y^{2} + (x^{3} + x + 1) y + x^{6} + x^{5} + x^{4} + x^{2}$ \\
$2$ & $5$ & $\avlink{2.2.c\_e}$ & $y^{2} + y + x^{5} + x^{4}$ \\
\hline
$3$ & $5$ & $\avlink{3.2.ad\_g\_ai}$ & $y^{2} + (x^{4} + x^{2} + 1) y + x^{8} + x + 1$ \\
$3$ & $5$ & $\avlink{3.2.ab\_a\_c}$ & $x y^{3} + (x^{2} + x) y^{2} + y + x^{4}$ \\
$3$ & $5$ & $\avlink{3.2.ab\_a\_c}$ & $x y^{3} + x^{2} y^{2} + (x^{2} + 1) y + x^{4}$ \\
$3$ & $5$ & $\avlink{3.2.ab\_c\_ac}$ & $y^{2} + x y + x^{7} + x^{5} + x$ \\
$3$ & $5$ & $\avlink{3.2.a\_a\_f}$ & $x y^{3} + y + x^{3}$ \\
$3$ & $5$ & $\avlink{3.2.a\_c\_ab}$ & $y^{2} + (x^{4} + x^{2} + x + 1) y + x^{6} + x^{5} + x^{2} + 1$ \\
$3$ & $5$ & $\avlink{3.2.a\_c\_b}$ & $y^{2} + (x^{4} + x^{2} + x + 1) y + x^{8} + x^{6} + x^{5} + x^{4}$ \\
$3$ & $5$ & $\avlink{3.2.b\_c\_c}$ & $y^{2} + x y + x^{7} + x^{5} + x^{2} + x$ \\
$3$ & $5$ & $\avlink{3.2.b\_c\_e}$ & $y^{2} + (x^{4} + x^{2}) y + x^{2} + x$ \\
\hline
$3$ & $6$ & $\avlink{3.2.b\_d\_c}$ & $y^{3} + x^{2} y^{2} + x^{2} y + x^{4} + x^{3} + x^{2} + x$ \\
$3$ & $6$ & $\avlink{3.2.b\_d\_e}$ & $x y^{3} + (x + 1) y^{2} + x^{4} + x^{3} + x$ \\
$3$ & $6$ & $\avlink{3.2.b\_e\_d}$ & $y^{3} + x^{2} y^{2} + (x^{3} + x^{2}) y + x^{4} + x$ \\
$3$ & $6$ & $\avlink{3.2.c\_d\_d}$ & $(x + 1) y^{3} + y + x^{3}$ \\
\hline
$4$ & $7$ & $\avlink{4.2.a\_c\_ab\_c}$ & $x^{3} y^{3} + (x^{3} + x^{2}) y + x^{6} + x^{3} + 1$ \\
$4$ & $7$ & $\avlink{4.2.a\_c\_ab\_g}$ & $(x^{2} + 1) y^{4} + (x^{3} + x^{2} + x + 1) y^{3} + (x^{5} + x^{4}) y + x^{6} + x^{3} + x^{2}$ \\
$4$ & $7$ & $\avlink{4.2.a\_c\_b\_c}$ & $x^{2} y^{4} + (x^{3} + 1) y^{2} + (x^{3} + x^{2} + x + 1) y + x^{6} + x^{5} + x^{3} + x^{2}$ \\
$4$ & $7$ & $\avlink{4.2.a\_c\_d\_c}$ & $(x^{2} + x + 1) y^{4} + (x^{3} + x^{2}) y^{3} + (x^{4} + x^{3} + 1) y^{2} + (x^{4} + x^{3} + x^{2}) y + x^{5} + x^{4} + x^{3} + x$ \\
$4$ & $7$ & $\avlink{4.2.a\_d\_b\_f}$ & $x^{3} y^{3} + (x^{3} + x^{2}) y + x^{6} + x^{5} + 1$ \\
$4$ & $7$ & $\avlink{4.2.a\_d\_b\_h}$ & $(x^{2} + x + 1) y^{4} + x^{3} y^{3} + (x^{4} + x^{2} + 1) y^{2} + x^{5} + x^{3} + x$ \\
$4$ & $7$ & $\avlink{4.2.b\_b\_c\_f}$ & $(x + 1) y^{3} + (x^{2} + x) y^{2} + (x^{3} + x) y + x^{5}$ \\
$4$ & $7$ & $\avlink{4.2.b\_c\_a\_a}$ & $y^{2} + x^{2} y + x^{9} + x^{7} + x + 1$ \\
$4$ & $7$ & $\avlink{4.2.c\_e\_h\_k}$ & $y^{2} + (x^{3} + x + 1) y + x^{9} + x^{7}$ \\
\hline
$4$ & $8$ & $\avlink{4.2.d\_i\_o\_x}$ & $(x + 1) y^{3} + (x^{3} + x^{2} + 1) y^{2} + x y + x^{4}$ \\
\hline
$5$ & $9$ & $\avlink{5.2.ab\_d\_b\_b\_j}$ & $(x^{4} + x^{3} + x^{2}) y^{4} + (x^{5} + x^{3} + x) y^{3} + (x^{3} + 1) y^{2} + (x^{7} + x + 1) y + x^{7} + x^{4} + x + 1$ \\
$5$ & $9$ & $\avlink{5.2.b\_c\_e\_i\_i}$ & $y^{2} + x^{3} y + x^{11} + x^{9} + x^{5} + x^{3} + x^{2} + x$ \\
$5$ & $9$ & $\avlink{5.2.b\_c\_e\_i\_i}$ & $y^{4} + (x^{4} + x^{2}) y^{2} + (x^{4} + x^{2} + 1) y + x^{8} + x^{6} + x^{4} + x$ \\
$5$ & $9$ & $\avlink{5.2.b\_f\_f\_p\_l}$ & $(x^{4} + x^{3} + x^{2}) y^{4} + (x^{5} + x^{3} + x^{2}) y^{3} + (x^{6} + x^{3} + x^{2} + x + 1) y^{2} + (x^{7} + x^{5} + x^{4} + x^{3} + 1) y + x^{5} + x^{4} + x^{3} + x^{2} + x + 1$ \\
$5$ & $9$ & $\avlink{5.2.b\_f\_f\_p\_p}$ & $y^{4} + x^{2} y^{3} + (x^{4} + x^{3} + x) y^{2} + (x^{5} + 1) y + x^{3} + x^{2} + x + 1$ \\
$5$ & $9$ & $\avlink{5.2.c\_e\_f\_k\_o}$ & $y^{2} + (x^{3} + x + 1) y + x^{12} + x^{11} + x^{10} + x^{7} + x^{5} + x^{3}$ \\
$5$ & $9$ & $\avlink{5.2.c\_f\_i\_n\_r}$ & $y^{4} + (x^{2} + x) y^{3} + (x^{4} + x^{3} + x^{2} + 1) y^{2} + (x^{6} + x^{5} + x^{4} + 1) y + x^{7} + x^{6} + x + 1$ \\
$5$ & $9$ & $\avlink{5.2.c\_f\_i\_p\_t}$ & $(x^{4} + x^{2} + x) y^{4} + (x^{4} + x^{3} + x + 1) y^{3} + (x^{6} + x^{2}) y^{2} + (x^{6} + x^{3} + x^{2} + x) y + x^{6} + x^{5} + 1$ \\
$5$ & $9$ & $\avlink{5.2.c\_f\_i\_p\_v}$ & $(x^{2} + x + 1) y^{6} + x y^{5} + (x^{4} + x) y^{4} + (x^{5} + x^{4} + x^{3} + x^{2} + x + 1) y^{3} + (x^{5} + x^{3} + 1) y^{2} + (x^{6} + x^{4} + x^{2}) y + x^{8} + x^{7} + x^{6} + x^{5} + x^{4} + x^{3} + x^{2} + x$ \\
$5$ & $9$ & $\avlink{5.2.c\_g\_j\_q\_u}$ & $y^{4} + y^{3} + (x^{4} + x^{3} + x^{2}) y^{2} + (x^{3} + x^{2} + 1) y + x^{6} + 1$ \\
$5$ & $9$ & $\avlink{5.2.d\_h\_n\_z\_bl}$ & $y^{2} + (x^{6} + x^{5} + x^{4} + x^{3} + x^{2} + x + 1) y + x^{10} + x^{6} + x^{4} + x^{3}$ \\
$5$ & $9$ & $\avlink{5.2.d\_i\_q\_bc\_bs}$ & $y^{4} + (x^{4} + x^{2}) y^{2} + (x^{4} + x^{2} + 1) y + x^{6} + x^{5}$ \\
\hline
\hline
$6$ & $11$ & $\avlink{6.2.c\_h\_k\_z\_bd\_cg}$ & $x^{2} y^{5} + (x^{3} + x) y^{4} + x^{4} y^{3} + (x^{5} + x^{4} + x^{3} + x^{2} + x + 1) y^{2} + (x^{6} + x^{3} + x^{2}) y + x^{7} + x^{3} + x + 1$ \\
$6$ & $11$ & $\avlink{6.2.d\_j\_t\_bn\_cl\_du}$ & $(x^{2} + x + 1) y^{4} + (x^{3} + x + 1) y^{3} + (x^{4} + x^{2} + 1) y^{2} + (x^{5} + x^{4} + 1) y + x^{5} + x^{4} + x^{3} + x$ \\
\hline
$7$ & $13$ & $(6, 18, 12, 18, 6, 60, 174)$ & $y^{4} + (x^{6} + x^{4} + x^{3} + x^{2} + 1) y^{2} + (x^{6} + x^{4} + x^{3} + x^{2}) y + x^{10} + x^{9} + x^{7} + x^{6}$ \\
\end{tabular}
\medskip
\caption{Purely geometric extensions with $q_F = 2$, $g_F > 1$, $d=2$. Completeness of the list is confirmed above the double line and conjectural below it
(Conjecture~\ref{conj:remaining problem}). For $g_F= 7$, $J(C)$ does not appear in LMFDB, so we list $J(C)(\FF_{2^i})$ for $i=1,\dots,7$.}
\label{table:geometric extensions big q2b}
\end{table}

\begin{table}
\tiny
\begin{tabular}{c|c|c}
$(g,g')$ & $A$ & $J(C)$ \\
\hline
$(2, 3)$ & $\avlink{1.2.ac}$ & $\avlink{2.2.ab\_c},\avlink{2.2.b\_c}$ \\
\hline
$(2, 4)$ & $\avlink{2.2.ab\_ab}\star$ & $\avlink{2.2.ab\_d}$ \\
$(2, 4)$ & $\avlink{2.2.ac\_c}$ & $\avlink{2.2.a\_a},\avlink{2.2.a\_c}$ \\
$(2, 4)$ & $\avlink{2.2.ad\_f}$ & $\avlink{2.2.b\_b}$ \\
\hline
$(2, 5)$ & $\avlink{3.2.ad\_d\_ac}$ & $\avlink{2.2.b\_d}$ \\
$(2, 5)$ & $\avlink{3.2.ae\_i\_am}$ & $\avlink{2.2.c\_e}$ \\
\hline
$(3, 5)$ & $\avlink{2.2.a\_ae}$ & $\avlink{3.2.ad\_g\_ai}$ \\
$(3, 5)$ & $\avlink{2.2.ac\_c}$ & $\avlink{3.2.ab\_a\_c},\avlink{3.2.ab\_c\_ac},\avlink{3.2.b\_c\_c}$ \\
$(3, 5)$ & $\avlink{2.2.ad\_f}$ & $\avlink{3.2.a\_a\_f},\avlink{3.2.a\_c\_ab},\avlink{3.2.a\_c\_b},\avlink{3.2.c\_e\_f}$ \\
$(3, 5)$ & $\avlink{2.2.ae\_i}$ & $\avlink{3.2.b\_c\_e}$ \\
\hline
$(3, 6)$ & $\avlink{3.2.ad\_c\_b}$ & $\avlink{3.2.b\_e\_d}$ \\
$(3, 6)$ & $\avlink{3.2.ad\_d\_ac}$ & $\avlink{3.2.b\_d\_c},\avlink{3.2.b\_d\_e}$ \\
$(3, 6)$ & $\avlink{3.2.ae\_i\_am}$ & $\avlink{3.2.c\_e\_e},\avlink{3.2.c\_e\_g}$ \\
$(3, 6)$ & $\avlink{3.2.ae\_j\_ap}$ & $\avlink{3.2.c\_d\_d}$ \\
\hline
$(4, 7)$ & $\avlink{3.2.ad\_c\_b}$ & $\avlink{4.2.a\_d\_ab\_f},\avlink{4.2.a\_d\_ab\_h},\avlink{4.2.a\_d\_b\_f},\avlink{4.2.a\_d\_b\_h},\avlink{4.2.a\_d\_d\_f}$ \\
$(4, 7)$ & $\avlink{3.2.ad\_d\_ac}$ & $\avlink{4.2.a\_c\_ab\_c},\avlink{4.2.a\_c\_ab\_e},\avlink{4.2.a\_c\_ab\_g},\avlink{4.2.a\_c\_b\_c},\avlink{4.2.a\_c\_b\_e}$ \\
$(4, 7)$ & $\avlink{3.2.ad\_d\_ac}$ & $\avlink{4.2.a\_c\_b\_g},\avlink{4.2.a\_c\_d\_a},\avlink{4.2.a\_c\_d\_c},\avlink{4.2.a\_e\_b\_k},\avlink{4.2.c\_g\_j\_q}$ \\
$(4, 7)$ & $\avlink{3.2.ae\_i\_am}$ & $\avlink{4.2.b\_c\_a\_a},\avlink{4.2.b\_c\_a\_c},\avlink{4.2.b\_c\_a\_e},\avlink{4.2.b\_c\_c\_c}$ \\
$(4, 7)$ & $\avlink{3.2.ae\_i\_am}$ & $\avlink{4.2.b\_c\_c\_e},\avlink{4.2.b\_c\_c\_g},\avlink{4.2.b\_c\_e\_e},\avlink{4.2.b\_e\_c\_i},\avlink{4.2.b\_e\_e\_k}$ \\
$(4, 7)$ & $\avlink{3.2.ae\_j\_ap}$ & $\avlink{4.2.b\_b\_a\_b},\avlink{4.2.b\_b\_a\_d},\avlink{4.2.b\_b\_c\_d},\avlink{4.2.b\_b\_c\_f},\avlink{4.2.b\_b\_c\_h},\avlink{4.2.b\_d\_c\_h},\avlink{4.2.b\_d\_e\_j}$ \\
$(4, 7)$ & $\avlink{3.2.af\_n\_aw}$ & $\avlink{4.2.c\_e\_h\_k}$ \\
\hline
$(4, 8)$ & $\avlink{4.2.ae\_g\_ae\_c}$ & $\avlink{4.2.c\_g\_i\_q}$ \\
$(4, 8)$ & $\avlink{4.2.af\_m\_au\_bd}$ & $\avlink{4.2.d\_i\_o\_x}$ \\
$(4, 8)$ & $\avlink{4.2.af\_n\_az\_bn}\star$ & $\avlink{4.2.d\_h\_l\_r}$ \\
\hline
$(5, 9)$ & $\avlink{4.2.ac\_ab\_ac\_n}$ & $\avlink{5.2.ab\_d\_b\_b\_j}$ \\
$(5, 9)$ & $\avlink{4.2.ad\_c\_a\_b}$ & $\avlink{5.2.a\_d\_c\_j\_d}$ \\
$(5, 9)$ & $\avlink{4.2.ad\_d\_ag\_o}$ & $\avlink{5.2.a\_c\_d\_e\_g}$ \\
$(5, 9)$ & $\avlink{4.2.ae\_f\_c\_al}$ & $\avlink{5.2.b\_f\_f\_p\_l},\avlink{5.2.b\_f\_f\_p\_n},\avlink{5.2.b\_f\_f\_p\_p}$ \\
$(5, 9)$ & $\avlink{4.2.ae\_g\_ae\_c}$ & $\avlink{5.2.b\_e\_c\_i\_a},\avlink{5.2.b\_e\_c\_i\_c},\avlink{5.2.b\_e\_c\_k\_e},\avlink{5.2.b\_e\_e\_k\_i}$ \\
$(5, 9)$ & $\avlink{4.2.ae\_g\_ae\_c}$ & $\avlink{5.2.b\_e\_e\_k\_k},\avlink{5.2.b\_e\_e\_k\_m},\avlink{5.2.b\_e\_e\_m\_k},\avlink{5.2.b\_e\_e\_m\_m},\avlink{5.2.b\_e\_g\_m\_q}$ \\
$(5, 9)$ & $\avlink{4.2.ae\_h\_ak\_p}$ & $\avlink{5.2.b\_d\_d\_h\_d},\avlink{5.2.b\_d\_d\_h\_f},\avlink{5.2.b\_d\_d\_h\_h},\avlink{5.2.b\_d\_d\_h\_j}$ \\
$(5, 9)$ & $\avlink{4.2.ae\_h\_ak\_p}$ & $\avlink{5.2.b\_d\_d\_j\_h},\avlink{5.2.b\_d\_d\_j\_j},\avlink{5.2.b\_d\_d\_j\_l},\avlink{5.2.b\_d\_f\_j\_j},\avlink{5.2.b\_d\_f\_j\_l}$ \\
$(5, 9)$ & $\avlink{4.2.ae\_i\_aq\_bc}$ & $\avlink{5.2.b\_c\_e\_i\_i}$ \\
$(5, 9)$ & $\avlink{4.2.af\_l\_ao\_q}$ & $\avlink{5.2.c\_g\_j\_q\_u},\avlink{5.2.c\_g\_j\_s\_w},\avlink{5.2.c\_g\_j\_u\_y},\avlink{5.2.c\_g\_l\_u\_bc}$ \\
$(5, 9)$ & $\avlink{4.2.af\_m\_au\_bd}$ & $\avlink{5.2.c\_f\_g\_l\_l},\avlink{5.2.c\_f\_g\_n\_p},\avlink{5.2.c\_f\_i\_n\_r},\avlink{5.2.c\_f\_i\_n\_t},\avlink{5.2.c\_f\_i\_p\_t},\avlink{5.2.c\_f\_i\_p\_v},\avlink{5.2.c\_f\_k\_r\_z}$ \\
$(5, 9)$ & $\avlink{4.2.af\_n\_aba\_bq}$ & $\avlink{5.2.c\_e\_f\_k\_m},\avlink{5.2.c\_e\_f\_k\_o},\avlink{5.2.c\_e\_f\_m\_q}$ \\
$(5, 9)$ & $\avlink{4.2.af\_n\_az\_bn}\star$ & $\avlink{5.2.c\_e\_e\_g\_f},\avlink{5.2.c\_e\_e\_g\_h},\avlink{5.2.c\_e\_e\_i\_j},\avlink{5.2.c\_e\_e\_k\_n},\avlink{5.2.c\_e\_g\_k\_l},\avlink{5.2.c\_e\_g\_k\_n},\avlink{5.2.c\_e\_g\_k\_p}$ \\
$(5, 9)$ & $\avlink{4.2.af\_n\_az\_bn}\star$ & $\avlink{5.2.c\_e\_g\_k\_r},\avlink{5.2.c\_e\_g\_m\_p},\avlink{5.2.c\_e\_g\_m\_r},\avlink{5.2.c\_e\_g\_m\_t},\avlink{5.2.c\_e\_i\_o\_t},\avlink{5.2.c\_e\_i\_o\_v},\avlink{5.2.c\_g\_i\_s\_v}$ \\
$(5, 9)$ & $\avlink{4.2.ag\_s\_abk\_ce}$ & $\avlink{5.2.d\_i\_q\_bc\_bs}$ \\
$(5, 9)$ & $\avlink{4.2.ag\_t\_abp\_co}$ & $\avlink{5.2.d\_h\_o\_z\_bk},\avlink{5.2.d\_h\_o\_z\_bm}$ \\
$(5, 9)$ & $\avlink{4.2.ag\_t\_abq\_cr}$ & $\avlink{5.2.d\_h\_n\_z\_bl}$ \\
\hline
$(6, 11)$ & $\avlink{5.2.ae\_e\_a\_l\_abh}$ & $\avlink{6.2.b\_g\_i\_v\_ba\_bz}$ \\
$(6, 11)$ & $\avlink{5.2.ae\_f\_ae\_p\_abi}$ & $\avlink{6.2.b\_f\_h\_p\_t\_bk},\avlink{6.2.b\_f\_h\_p\_v\_bi},\avlink{6.2.b\_f\_h\_r\_v\_bq}$ \\
$(6, 11)$ & $\avlink{5.2.af\_k\_ak\_f\_ac}$ & $\avlink{6.2.c\_h\_k\_z\_bd\_cg}$ \\
$(6, 11)$ & $\avlink{5.2.af\_l\_as\_bg\_aca}$ & $\avlink{6.2.c\_g\_l\_w\_bg\_ca},\avlink{6.2.c\_g\_l\_w\_bg\_cc},\avlink{6.2.c\_g\_l\_w\_bg\_ce}$ \\
$(6, 11)$ & $\avlink{5.2.af\_l\_as\_bg\_aca}$ & $\avlink{6.2.c\_g\_l\_w\_bi\_ca},\avlink{6.2.c\_g\_l\_w\_bi\_cc},\avlink{6.2.c\_g\_l\_w\_bi\_ce}$ \\
$(6, 11)$ & $\avlink{5.2.af\_m\_aw\_bk\_acb}$ & $\avlink{6.2.c\_f\_i\_q\_v\_bh},\avlink{6.2.c\_f\_i\_q\_v\_bj}, \avlink{6.2.c\_f\_i\_q\_v\_bl}, \avlink{6.2.c\_f\_i\_q\_x\_bj}$ \\
$(6, 11)$ & $\avlink{5.2.af\_m\_aw\_bk\_acb}$ & $,\avlink{6.2.c\_f\_i\_q\_x\_bl}, \avlink{6.2.c\_f\_i\_q\_x\_bn},\avlink{6.2.c\_f\_i\_q\_x\_bp},\avlink{6.2.c\_f\_i\_q\_z\_bn}$ \\
$(6, 11)$ & $\avlink{5.2.af\_m\_aw\_bk\_acb}$ & $\avlink{6.2.c\_f\_i\_q\_z\_bp},\avlink{6.2.c\_f\_i\_s\_z\_bp},\avlink{6.2.c\_f\_i\_s\_z\_br}$ \\
$(6, 11)$ & $\avlink{5.2.ag\_r\_abg\_bx\_acs}$ & $\avlink{6.2.d\_j\_r\_bh\_bx\_cy},\avlink{6.2.d\_j\_r\_bh\_bx\_da},\avlink{6.2.d\_j\_r\_bh\_bz\_dc},\avlink{6.2.d\_j\_r\_bj\_cb\_di}$ \\
$(6, 11)$ & $\avlink{5.2.ag\_r\_abg\_bx\_acs}$ & $\avlink{6.2.d\_j\_r\_bj\_cd\_dm},\avlink{6.2.d\_j\_t\_bn\_cl\_ds},\avlink{6.2.d\_j\_t\_bn\_cl\_du},\avlink{6.2.d\_j\_t\_bn\_cl\_dw}$ \\
$(6, 11)$ & $\avlink{5.2.ag\_t\_abt\_di\_afe}$ & $\avlink{6.2.d\_h\_m\_x\_bi\_ca},\avlink{6.2.d\_h\_m\_x\_bk\_ce},\avlink{6.2.d\_h\_m\_x\_bm\_ci}$ \\
\hline
$(7, 13)$ & $\avlink{6.2.ag\_p\_aw\_bh\_acu\_ey}$ & $(6, 18, 12, 18, 6, 60, 174),(6, 18, 12, 18, 6, 72, 132),(6, 18, 12, 18, 6, 84, 90)$ \\
$(7, 13)$ & $\avlink{6.2.ah\_y\_ace\_ea\_agn\_jq}$ & $(7, 15, 7, 31, 12, 69, 126),(7, 15, 7, 31, 22, 45, 112)$\\
$(7, 13)$ & $\avlink{6.2.ah\_y\_ace\_ea\_agn\_jq}$ & $(7, 15, 7, 31, 22, 57, 70),(7, 15, 7, 31, 22, 57, 84)$ \\
\end{tabular}
\medskip
\begin{minipage}[t]{6.25cm}
\begin{tabular}{c|c|c}
$(g,g')$ & $A$ & $J(C)$ \\
\hline
$(2, 4)$ & $\avlink{2.2.ab\_ab}$ & $\avlink{2.2.ac\_e}$ \\
$(2, 4)$ & $\avlink{2.2.ad\_f}$ & $\avlink{2.2.a\_ab},\avlink{2.2.a\_c}$ \\
$(2, 4)$ & $\avlink{2.2.ae\_i}$ & $\avlink{2.2.b\_b},\avlink{2.2.c\_c}$ \\
\hline
$(2, 6)$ & $\avlink{4.2.ad\_b\_g\_am}$ & $\avlink{2.2.a\_c}$ \\
$(2, 6)$ & $\avlink{4.2.ae\_e\_h\_av}$ & $\avlink{2.2.b\_d}$ \\
$(2, 6)$ & $\avlink{4.2.ae\_f\_c\_al}$ & $\avlink{2.2.b\_c}$ \\
$(2, 6)$ & $\avlink{4.2.af\_l\_ao\_q}$ & $\avlink{2.2.c\_c}$ \\
\hline
$(3, 7)$ & $\avlink{4.2.ac\_ac\_e\_a}$ & $\avlink{3.2.ab\_c\_a}$ \\
$(3, 7)$ & $\avlink{4.2.ad\_b\_g\_am}$ & $\avlink{3.2.a\_b\_a},\avlink{3.2.a\_b\_d}$ \\
$(3, 7)$ & $\avlink{4.2.ae\_e\_h\_av}$ & $\avlink{3.2.b\_c\_b},\avlink{3.2.b\_c\_e}$ \\
$(3, 7)$ & $\avlink{4.2.ae\_e\_i\_ay}$ & $\avlink{3.2.b\_c\_a}$ \\
$(3, 7)$ & $\avlink{4.2.ae\_e\_i\_ay}$ & $\avlink{3.2.b\_c\_d}, \avlink{3.2.b\_d\_e}$ \\ 
$(3, 7)$ & $\avlink{4.2.ae\_e\_i\_ay}$ & $\avlink{3.2.c\_e\_h},\avlink{3.2.d\_h\_l}$ \\
$(3, 7)$ & $\avlink{4.2.ae\_f\_c\_al}$ & $\avlink{3.2.b\_b\_b}, \avlink{3.2.b\_b\_e}$ \\
$(3, 7)$ & $\avlink{4.2.ae\_f\_c\_al}$ & $\avlink{3.2.b\_c\_d}, \avlink{3.2.b\_e\_e}$ \\
$(3, 7)$ & $\avlink{4.2.ae\_f\_c\_al}$ & $\avlink{3.2.c\_d\_f},\avlink{3.2.c\_e\_h}$ \\
\hline
$(4, 10)$ & $\avlink{6.2.ag\_p\_ar\_ag\_cg\_aei}$ & $\avlink{4.2.d\_f\_i\_n}$ \\
$(4, 10)$ & $\avlink{6.2.ag\_p\_at\_g\_bb\_acj}$ & $\avlink{4.2.d\_f\_k\_s}$ \\
$(4, 10)$ & $\avlink{6.2.ah\_v\_abe\_a\_dk\_ahc}$ & $\avlink{4.2.e\_j\_q\_z},\avlink{4.2.e\_k\_u\_bg}$ \\
$(4, 10)$ & $\avlink{6.2.ai\_bc\_abw\_m\_ey\_alc}$ & $\avlink{4.2.f\_o\_bc\_bs}$ \\
\end{tabular}
\end{minipage}
\begin{minipage}[t]{6.25cm}
\begin{tabular}{c|c|c}
$(d,g,g')$ & $A$ & $J(C)$ \\
\hline
$(4, 2, 5)$ & $\avlink{3.2.ac\_ac\_i}$ & $\avlink{2.2.ab\_c}$ \\
$(4, 2, 5)$ & $\avlink{3.2.ae\_i\_am}$ & $\avlink{2.2.b\_a}$ \\
$(4, 2, 5)$ & $\avlink{3.2.ae\_i\_am}$ & $\avlink{2.2.b\_c},\avlink{2.2.c\_e}$ \\
\hline
$(4, 2, 6)$ & $\avlink{4.2.ae\_e\_i\_ay}$ & $\avlink{2.2.c\_e}$ \\
\hline
$(4, 3, 9)$ & $\avlink{6.2.af\_i\_ab\_ag\_an\_br}$ & $\avlink{3.2.c\_e\_f}$ \\
$(4, 3, 9)$ & $\avlink{6.2.ag\_o\_am\_am\_bw\_adc}$ & $\avlink{3.2.d\_g\_i}$ \\
\hline
$(5, 2, 6)$ & $\avlink{4.2.ad\_b\_g\_am}$ & $\avlink{2.2.a\_b}$ \\
$(5, 2, 6)$ & $\avlink{4.2.ad\_c\_a\_b}$ & $\avlink{2.2.a\_a}$ \\
$(5, 2, 6)$ & $\avlink{4.2.ae\_e\_h\_av}$ & $\avlink{2.2.b\_c}$ \\
$(5, 2, 6)$ & $\avlink{4.2.ae\_e\_i\_ay}$ & $\avlink{2.2.b\_d}$ \\
$(5, 2, 6)$ & $\avlink{4.2.ae\_h\_ak\_p}$ & $\avlink{2.2.b\_c}$ \\
$(5, 2, 6)$ & $\avlink{4.2.af\_l\_ao\_q}$ & $\avlink{2.2.c\_e},\avlink{2.2.d\_f}$ \\
$(5, 2, 6)$ & $\avlink{4.2.af\_n\_az\_bn}$ & $\avlink{2.2.c\_e}$ \\
\hline
$(6, 2, 7)$ & $\avlink{5.2.ae\_e\_e\_am\_q}$ & $\avlink{2.2.b\_c},\avlink{2.2.c\_e}$ \\
$(6, 2, 7)$ & $\avlink{5.2.af\_k\_ak\_f\_ac}$ & $\avlink{2.2.c\_c},\avlink{2.2.c\_d}$ \\
$(6, 2, 7)$ & $\avlink{5.2.af\_l\_as\_bg\_aca}$ & $\avlink{2.2.c\_c}$ \\
$(6, 2, 7)$ & $\avlink{5.2.ag\_q\_aba\_bh\_abr}$ & $\avlink{2.2.d\_f}$ \\
$(6, 2, 7)$ & $\avlink{5.2.ag\_r\_abg\_bx\_acs}$ & $\avlink{2.2.d\_f}$ \\
\hline
$(7, 2, 8)$ & $\avlink{6.2.af\_j\_ah\_d\_ab\_ab}$ & $\avlink{2.2.c\_d}$ \\
\end{tabular}
\end{minipage}

\medskip
\caption{Candidates for $A$ and $J(C)$ in Theorem~\ref{T:purely geometric bounds2}(b) for $d=2$, $d=3$, and $d>3$ respectively; $\star$ means $A$ is not principally polarizable (Remark~\ref{R:principal polarization}). For $g = 7$, $J(C)$ does not appear in LMFDB, so we list $J(C)(\FF_{2^i})$ for $i=1,\dots,7$.}
\label{table:geometric bounds}
\end{table}


\begin{thebibliography}{99}

\bibitem{achter-casalaina-martin}
J. Achter and S. Casalaina-Martin, Putting the $p$ back in Prym, in preparation.

\bibitem{bae-kang}
S. Bae and P. Kang, Class numbers of cyclotomic function fields, \textit{Acta Arith.} \textbf{102} (2002), 251--259.

\bibitem{birkenhake-lange}
C. Birkenhake and H. Lange, \textit{Complex Algebraic Varieties}, second edition, Grundlehren der math. Wissenschaften 302, Springer-Verlag, Berlin, 2004.

\bibitem{booher-voloch}
J. Booher and J.F. Voloch, Recovering affine curves over finite fields from $L$-functions,
\textit{Pacific J. Math.} \textbf{314} (2021), 1--28.

\bibitem{DWK}
T. D'Nelly-Warady and K.S. Kedlaya, Geometric decomposition of abelian varieties of order 1, \arXiv{2109.03986}{2}{2022}; to appear in the \textit{Women in Numbers 5} proceedings.

\bibitem{dragutinovic}
D. Dragutinovi\'c, Computing binary curves of genus five, \arXiv{2202.07809}{1}{2022};
associated repository \url{https://github.com/DusanDragutinovic/MT_Curves}.

\bibitem{lmfdb-av}
T. Dupuy, K.S. Kedlaya, D. Roe, and C. Vincent, Isogeny classes of abelian varieties over finite fields in the LMFDB,
in \textit{Arithmetic Geometry, Number Theory, and Computation}, Simons Symposia, Springer, 2022, 375--448. 

\bibitem{faber-grantham-howe}
X. Faber, J. Grantham, and E.W. Howe, On the maximum gonality of a curve over a finite field, \arXiv{2207.14307}{1}{2022}.

\bibitem{hofmann-sircana}
T. Hofmann and C. Sircana, Normal CM-fields with class number one, \arXiv{2011.12089}{1}{2020}.

\bibitem{howe-lauter1}
E.W. Howe and K.E. Lauter, Improved upper bounds for the number of points on curves over finite fields, \textit{Ann. Inst. Fourier (Grenoble)}
\textbf{53} (2000), 1677--1737.

\bibitem{howe-lauter2}
E.W. Howe and K.E. Lauter, New methods for bounding the number of points on curves over finite fields, in \textit{Geometry and arithmetic},
Eur. Math. Soc., Z\"urich, 2012, 173--212.

\bibitem{jung-ahn}
H. Jung and J. Ahn, Divisor class number one problem for abelian extensions over rational function fields, 
\textit{J. Algebra} \textbf{310} (2007), 1--14.

\bibitem{kedlaya-root}
K.S. Kedlaya, Search techniques for root-unitary polynomials, in
\textit{Computational Arithmetic Geometry}, Contemporary Math. 463, Amer. Math. Soc., 2008, 71--82. 

\bibitem{kedlaya-construction}
K.S. Kedlaya, Abelian varieties over $\FF_2$ of prescribed order, \arXiv{2107.12453}{3}{2022};
to appear in \textit{Publ. Math. Besançon} (special issue for GTA 2021). 

\bibitem{part2}
K.S. Kedlaya, The relative class number one problem for function fields, II,
\arXiv{2206.02084}{1}{2022}.

\bibitem{part3}
K.S. Kedlaya, The relative class number one problem for function fields, III, 
\arXiv{2208.11277}{1}{2022}.

\bibitem{lauter}
K. Lauter, Zeta functions of curves over finite fields with many rational points,
\textit{Coding Theory,
Cryptography and Related Areas (Guanajuato, 1998)}, Springer, Berlin, 2000, 167--174.

\bibitem{repo}
K.S. Kedlaya, GitHub repository \url{https://github.com/kedlaya/same-class-number}.

\bibitem{kida-murayabashi}
M. Kida and N. Murabayashi, Cyclotomic function fields with divisor class number one, \textit{Tokyo J. Math.} \textbf{14} (1991), 45--56.

\bibitem{leitzel-madan}
J.R.C. Leitzel and M.L. Madan, Algebraic function fields with equal class number, \textit{Acta Arith.} \textbf{30} (1976), 169--177.

\bibitem{leitzel-madan-queen}
J.R.C. Leitzel, M.L. Madan, and C.S. Queen, Algebraic function fields with small class number, \textit{J. Number Theory} \textbf{7} (1975), 11--27.

\bibitem{lmfdb}
The LMFDB Collaboration, L-Functions and Modular Forms Database, \url{https://lmfdb.org}.

\bibitem{macrae}
R. Macrae, On unique factorization in certain rings of algebraic functions,
\textit{J. Algebra} \textbf{17} (1971), 243--261.

\bibitem{madan-pal}
M.L. Madan and S. Pal, 
Abelian varieties and a conjecture of R. M. Robinson, \textit{J. reine angew. Math.} \textbf{291} (1977), 78--91.

\bibitem{magma}
The Magma Group, \Magma{} version 2.27-1, 2022, \url{http://magma.maths.usyd.edu.au}.

\bibitem{manypoints}
manYPoints --- Table of Curves with Many Points, \url{https://manypoints.org} (accessed Nov 2021).

\bibitem{mercuri-stirpe}
P. Mercuri and C. Stirpe, Classification of algebraic function fields with class number one, \textit{J. Number Theory} \textbf{154} (2015), 365--374.

\bibitem{milne}
J.S. Milne, Jacobian varieties, in \textit{Arithmetic Geometry}, 
Springer-Verlag, New York, 1986.

\bibitem{mukai-cg}
S. Mukai, Curves and Grassmannians, in \textit{Algebraic Geometry and Related Topics}, International Press, Cambridge, MA, 1993, 19--40.

\bibitem{mukai}
S. Mukai, Curves and symmetric spaces, I, \textit{Amer. J. Math.} \textbf{117} (1995), 1627--1644.

\bibitem{odlyzko}
A.M. Odlyzko, Some analytic estimates of class numbers and discriminants, \textit{Invent. Math.} \textbf{29} (1975), 275--286.

\bibitem{robinson}
R.M. Robinson, Conjugate algebraic units in a special interval, \textit{Math. Z.} \textbf{154} (1977), 31--40.

\bibitem{sage}
The Sage Developers, SageMath version 9.7, 2022, \url{https://www.sagemath.org}.

\bibitem{serre-rational}
J.-P. Serre, \textit{Rational Points on Curves over Finite Fields}, Doc. Math. 18, 
Soc. Math. France, 2020.

\bibitem{shen-shi}
Q. Shen and S. Shi, Function fields of class number one,
\textit{J. Num. Theory} \textbf{154} (2015), 375--389.

\bibitem{shiomi}
D. Shiomi, On the Deuring--Shafarevich formula, \textit{Tokyo Math. J.} \textbf{34} (2011), 313--318.

\bibitem{smith}
A. Smith, Algebraic integers with conjugates in a prescribed distribution,
\arXiv{2111.12660}{1}{2021}.

\bibitem{stark}
H. Stark, Some effective cases of the Brauer-Siegel theorem,
\textit{Invent. Math.} \textbf{23} (1974), 135--152.

\bibitem{sutherland-voloch}
A.V. Sutherland and F. Voloch,
Maps between curves and arithmetic obstructions, 
in \textit{Arithmetic Geometry: Computations and Applications}, Contemp. Math. 722, 
Amer. Math. Soc., 2019, 167--175.

\bibitem{stirpe}
C. Stirpe, A counterexample to ``Algebraic function fields with small class number'', \textit{J. Number Theory} \textbf{143} (2014), 402--404.

\bibitem{xarles}
X. Xarles, A census of all genus 4 curves over the field with 2 elements,
\arXiv{2007.07822}{1}{2020}.

\end{thebibliography}
\end{document}